\newtheorem{thm}{Theorem}[section]
\newtheorem{cor}[thm]{Corollary}
\newtheorem{prop}[thm]{Proposition}
\newtheorem{lem}[thm]{Lemma}
\newtheorem{rem}[thm]{Remark}
\newtheorem{definition}[thm]{Definition}
\newcommand{\dist}{\mathop{\text{dist}}}
\DeclareMathOperator*{\supp}{supp}
\numberwithin{equation}{section}
\numberwithin{thm}{section}
\newcommand{\vertiii}[1]{{\left\vert\kern-0.25ex\left\vert\kern-0.25ex\left\vert #1 
    \right\vert\kern-0.25ex\right\vert\kern-0.25ex\right\vert}}
\title[Boundedness of composition operators on Besov spaces]{Boundedness of composition operators on higher order Besov spaces in one dimension}
\author[M. Ikeda, I. Ishikawa and K. Taniguchi]
{Masahiro Ikeda, Isao Ishikawa and Koichi Taniguchi}
\address[M. Ikeda]
{Center for Advanced Intelligence Project, RIKEN
1-4-1, Nihonbashi, Chuo-ku, Tokyo 103-0027, Japan/
Faculty of Science and Technology,
Keio University, 
3-14-1 Hiyoshi, Kohoku-ku, Yokohama, 223-8522, Japan.
}
\email{masahiro.ikeda@riken.jp/masahiro.ikeda@keio.jp}
\address[I. Ishikawa]
{Center for Data Science, 
Ehime University,
3, Bunkyo-cho, Matsuyama, Ehime, 790-8577, Japan/ 
Center for Advanced Intelligence Project, RIKEN
1-4-1, Nihonbashi, Chuo-ku, Tokyo 103-0027, Japan.
}
\email{ishikawa.isao.zx@ehime-u.ac.jp}
\address[K. Taniguchi]
{Advanced Institute for Materials Research,
Tohoku University,
2-1-1 Katahira, Aoba-ku, Sendai, 980-8577, Japan.}
\email{koichi.taniguchi.b7@tohoku.ac.jp}
\subjclass[2020]{Primary 47B33, 30H25; Secondary 46E35;}
\keywords{Composition operators, boundedness, automorphism, Besov spaces, Sobolev spaces, pointwise multipliers}
\begin{document}

\maketitle

\begin{abstract}
This paper aims to characterize boundedness of composition operators on Besov spaces $B^s_{p,q}$ of higher order derivatives $s>1+1/p$ on the one-dimensional Euclidean space.
In contrast to the lower order case $0<s<1$, there were a few results on the boundedness of composition operators for $s>1$.
We prove a relation between the composition operators and pointwise multipliers of Besov spaces, and effectively use the characterizations of the pointwise multipliers. As a result, we obtain necessary and sufficient conditions for the boundedness of composition operators for general $p$, $q$, and $s$ such that $1<p\le \infty$, $0<q\le \infty$, and $s>1+1/p$. 
In this paper, we treat, 
as a map that induces the composition operator, not only a homeomorphism on the real line but also a continuous map whose number of elements of inverse images at any one point is bounded above.
We also show a similar characterization of the boundedness of composition operators on Sobolev spaces.

\end{abstract}

\section{Introduction}\label{sec:1}

In this paper, we characterize a mapping 
$\varphi: \mathbb{R} \to \mathbb{R}$ for which the composition operator 
\[
C_\varphi: f \mapsto f \circ \varphi
\]
is bounded on the inhomogeneous Besov space $B^s_{p,q}=B^s_{p,q}(\mathbb R)$ in one dimension\footnote{We think of the composition operator $C_\varphi$ as the mapping to be defined by $f\mapsto f \circ \varphi$ for continuous functions $f$ as canonical representative if $B^s_{p,q}$ is embedded in the space of continuous functions on $\mathbb R$, i.e., if ``$s>1/p$'' or ``$s=1/p$ and $0<q\le 1$'' (see Proposition \ref{prop:embedding}). Otherwise, the mapping $f\mapsto f \circ \varphi$ is well-defined in the sense of equivalence classes with respect to equality almost evertywhere if $\varphi$ is non-singular (i.e. $|\varphi^{-1}(E)|=0$ for any measurable null set $E \subset \mathbb R$).}. 

The composition operator appears in the context of a change of variables. Its boundedness has long been studied on various function spaces (see, e.g., \cite{Hencl2014,AA2021,Bou-2000,BS-1999,Chae2003,HIIS-2021,Jon-1983,IIS-2022,IIS-appear,Ish-arxiv,KKSS-2014,KXYZ2017,KYZ2011,OP-2017,Sin-1976,Tri_1992,Vod-1989,Vod2005,YYZ-2017} and references therein). In addition, there have been applications to
the analysis of dynamical systems (see \cite{Koo-1931,IIS-appear}), theory of function spaces on domains (see \cite{Tri_1983,Tri_1992,MS_2009}), transport equations (see \cite{Chae2003,FM-arxiv,IIS-appear,Xia-2019}), and so on.
For the composition operators on Besov spaces, Bourdaud and Sickel \cite{BS-1999} provided the necessary and sufficient conditions on homeomorphisms $\varphi$ for the boundedness of $C_\varphi$ on homogeneous and inhomogeneous Besov spaces with low order derivatives $0<s<1$, and later,
Bourdaud \cite{Bou-2000} weakened the assumption on $\varphi$ and gave sufficient conditions on (not necessarily homeomorphisms) $\varphi$ for the boundedness to hold. 
In contrast, the higher order case $s>1$ is also important, but there needs to be more research.
To the best of our knowledge, the characterization was mentioned only for $s \in \mathbb N$ with $s\ge2$ in the framework of Sobolev spaces (see \cite[Remark 1.14]{BS-1999}).
Our purpose is to study the boundedness of $C_\varphi$ on inhomogeneous Besov spaces $B^s_{p,q}$ with $s>1$, and our main results are its necessary and sufficient conditions in the case $s>1+1/p$. 
In this paper, we do not deal with composition operators on homogeneous Besov spaces $\dot B^s_{p,q}$, because they are limited to the trivial ones in the case $s>1$ (see, e.g., \cite[Remark 1.14]{BS-1999} and \cite[Section 6]{Bou-2000}).

To state the results, let us give some notations and definitions. 
We employ the characterization of $B^s_{p,q}$ by differences, i.e, $B^s_{p,q}$ is the collection of all $f \in L^1_{\mathrm{loc}}$ such that  
\[
\|f\|_{B^s_{p,q}}:=
\|f\|_{L^p}
+
\left(
\int_{|h|\le 1}
|h|^{-sq}
\|\Delta_h^mf\|_{L^p}^q
\frac{dh}{|h|}
\right)^\frac{1}{q} < \infty
\]
(with the usual modification for $q=\infty$), 
where $m\in \mathbb N$ with $m>s$.
Here, $L^1_{\mathrm{loc}}=L^1_{\mathrm{loc}}(\mathbb R)$ is the space of all measurable functions on $\mathbb R$ such that $f \in L^1(K)$ for any compact set $K\subset \mathbb R$, and 
the difference operator $\Delta^m_h$ of order $m\in \mathbb N$ is defined by 
\eqref{def:difference_op} below.
See Subsection \ref{sub:2.1} for the details of $B^s_{p,q}$. 
We say that $\varphi$ is Lipschitz if there exists a constant $L>0$ such that 
\[
|\varphi(x) - \varphi(y)|\le L | x-y |,\quad x,y\in \mathbb R.
\]
We denote by $\# A$ the cardinality of a set $A$. For a measurable mapping $\varphi$, we define
\[
U(\varphi) := \sup_{|I| = 1} |\varphi^{-1}(I)|
\quad \text{and}\quad 
\mathcal{M}(\varphi) := \sup_{I} |I|^{-1}|\varphi^{-1}(I)|,
\]
where the supremum is taken over closed bounded intervals in $\mathbb R$. 
We denote by $M(X)$ the set of all pointwise multipliers of a quasi-normed space $X$, i.e., the set of all measurable functions $f$ on $\mathbb R$ such that 
\[
\|f\|_{M(X)}:=
\sup_{\|g\|_{X} \le 1} \|fg\|_{X} < \infty.
\]

The previous results on boundedness of $C_\varphi$ in the case $0<s<1$ are summarized as follows: 

\begin{thm}[Theorems 1.5 and 1.7 in \cite{BS-1999} and Theorems 9 and 11 in \cite{Bou-2000}]\label{thm:previous1}
Let $0<s<1$, $1\le p < \infty$ and $1\le q\le \infty$. Assume that 
\begin{equation}\tag{H}\label{ass:homeo}
    \text{$\varphi : \mathbb R \to \mathbb R$ is a homeomorphism}.
\end{equation}
Then 
$C_\varphi$ is bounded on $B^s_{p,q}$ if and only if 
\begin{equation*}\label{eq.ns-low}
\begin{dcases}
\text{$\varphi^{-1}$ is Lipschitz}\quad &\text{if }0<s<\frac{1}{p},\\
\text{$\varphi$ is Lipschitz and $U(\varphi) < \infty$}\quad &\text{if }\frac{1}{p}< s <1.
\end{dcases}
\end{equation*}
\end{thm}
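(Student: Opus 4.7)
The plan is to work with the difference characterization
\[
\|f\|_{B^s_{p,q}}\sim\|f\|_{L^p}+\left(\int_{|h|\le 1}|h|^{-sq}\|\Delta_h f\|_{L^p}^q\,\frac{dh}{|h|}\right)^{1/q}
\]
(valid with $m=1$ for $0<s<1$) and to analyze $\|f\circ\varphi\|_{B^s_{p,q}}$ by change of variables. Since $\varphi:\mathbb R\to\mathbb R$ is a homeomorphism it is strictly monotone; I assume it is increasing, and I introduce the pushforward measure $\mu(A):=|\varphi^{-1}(A)|$, so that $\int g\circ\varphi\,dx=\int g\,d\mu$. The hypotheses translate cleanly into this language: ``$\varphi^{-1}$ $L$-Lipschitz'' is equivalent to $\mu\le L\cdot\mathrm{Leb}$, while ``$U(\varphi)<\infty$'' is equivalent to $\sup_{|I|=1}\mu(I)<\infty$.

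For sufficiency I write $\Delta_h(f\circ\varphi)(x)=\Delta_{\delta_h(x)}f(\varphi(x))$ with $\delta_h(x):=\varphi(x+h)-\varphi(x)$. In the range $1/p<s<1$, with $\varphi$ $L$-Lipschitz and $U(\varphi)<\infty$, the two-sided bound $|\delta_h(x)|\le L|h|$ lets me dominate $\|\Delta_h(f\circ\varphi)\|_{L^p}$ by a localized $L^p$-modulus of $f$ at scale $L|h|$; the embedding $B^s_{p,q}\hookrightarrow L^\infty_{\mathrm{loc}}$ from Proposition \ref{prop:embedding}, combined with the unit-interval decomposition $\|f\circ\varphi\|_{L^p}^p\le U(\varphi)\sum_n\|f\|_{L^\infty([n,n+1])}^p$, handles the $L^p$-term. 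In the range $0<s<1/p$, with only $\varphi^{-1}$ Lipschitz, the $L^p$-part is immediate from $\mu\le L\cdot\mathrm{Leb}$, and the difference part is best handled by passing through an equivalent decomposition of $B^s_{p,q}$ more robust under non-smooth composition (e.g.\ an atomic or wavelet decomposition indexed on dyadic intervals), in which $C_\varphi$ transports an atom supported in a dyadic interval $Q$ to an atom on $\varphi^{-1}(Q)$ of length at most $L|Q|$, absorbing only an $L$-dependent constant.

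For necessity I test $C_\varphi$ on explicit families. In the range $0<s<1/p$, characteristic functions of small intervals belong to $B^s_{p,q}$ with $\|\chi_I\|_{B^s_{p,q}}\sim|I|^{1/p-s}$; since $\chi_I\circ\varphi=\chi_{\varphi^{-1}(I)}$, boundedness of $C_\varphi$ yields $|\varphi^{-1}(I)|^{1/p-s}\lesssim|I|^{1/p-s}$, which rearranges (using $1/p-s>0$) to the Lipschitz bound $|\varphi^{-1}(I)|\lesssim|I|$. In the range $1/p<s<1$, step functions leave $B^s_{p,q}$, so I test instead with translates of a smooth unit-height bump $\psi$: the $L^p$-lower bound $\|C_\varphi\psi(\cdot-y_0)\|_{L^p}^p\gtrsim|\varphi^{-1}([y_0-\tfrac12,y_0+\tfrac12])|$ forces $U(\varphi)<\infty$, and a rescaled family $\psi((\cdot-y_0)/t)$, whose Besov norm scales as $t^{1/p-s}$, combined with a lower bound on the difference seminorm of $\psi\circ\varphi$ capturing the oscillation of $\varphi$ at scale $t$, forces $\varphi$ Lipschitz.

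The chief technical obstacle is the sufficiency argument in the low-order range $0<s<1/p$. Working directly with the difference seminorm, the change of variables $y=\varphi(x)$ in $\|\Delta_h(f\circ\varphi)\|_{L^p}$ introduces the non-constant displacement $\delta_h(x)$, and since $\varphi$ is not assumed Lipschitz, $\delta_h$ can be arbitrarily large compared with $|h|$; naive manipulations of the Besov kernel $|h|^{-sq-1}$ then go the wrong way. The natural remedy is to use a characterization of $B^s_{p,q}$ that is robust under composition by not-necessarily-smooth homeomorphisms—such as the atomic or wavelet characterization indexed on dyadic intervals—where the Lipschitz property of $\varphi^{-1}$ enters only through the transport of interval supports by a bounded factor.
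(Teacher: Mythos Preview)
The paper does not prove this theorem itself; it is quoted from \cite{BS-1999,Bou-2000}, and the only comment on method is that it is ``obtained by interpolation from the borderline spaces $L^p$, $B^s_{\infty,\infty}$, $W^1_p$ and $B^s_{1,1}$.'' Your necessity arguments---testing with $\chi_I$ for $s<1/p$, and with translated and rescaled bumps for $s>1/p$---are correct and are essentially those used in the literature (compare Lemma~\ref{lem:nec1} and Proposition~\ref{lem:nec3} here).

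Your sufficiency arguments, however, depart from the interpolation route and both have genuine gaps. For $1/p<s<1$ you claim that $|\delta_h(x)|\le L|h|$ lets you dominate $\|\Delta_h(f\circ\varphi)\|_{L^p}$ by an $L^p$-modulus of $f$ at scale $L|h|$, but this hides a change of variables $y=\varphi(x)$ whose Jacobian is \emph{not} assumed bounded: only $U(\varphi)<\infty$ is available, not $\varphi^{-1}$ Lipschitz. The pointwise bound $|\Delta_{\delta_h(x)}f(\varphi(x))|\le\sup_{|k|\le L|h|}|\Delta_k f(\varphi(x))|$ still leaves a composed quantity on the right, and the estimate does not close for general $q$. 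This is exactly why \cite{BS-1999} interpolates between $W^1_p$ (where the chain rule plus $U(\varphi)<\infty$ suffice) and $B^\sigma_{\infty,\infty}$ (where the Lipschitz bound alone suffices), rather than attacking the Besov seminorm directly. For $0<s<1/p$, your atomic proposal runs into the fact that standard Besov atoms for $0<s<1$ carry a first-order smoothness condition, and $a_Q\circ\varphi$ need not satisfy it since only $\varphi^{-1}$, not $\varphi$, is assumed Lipschitz; so ``$C_\varphi$ transports an atom on $Q$ to an atom on $\varphi^{-1}(Q)$'' fails as written. A non-smooth (e.g.\ Haar-type) characterization, valid precisely in the range $s<1/p$, could repair this, but you have not invoked it; \cite{BS-1999,Bou-2000} instead interpolate between $L^p$ and $B^\sigma_{1,1}$.
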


\begin{thm}[Theorems 7 and 8 in \cite{Bou-2000}]\label{thm:previous2}
Let $0<s<1$, $1\le p < \infty$ and $1\le q\le \infty$. Assume that $\varphi : \mathbb R \to \mathbb R$ is a mapping such that
\begin{equation}\tag{F}\label{ass:finite}
\sup_{x \in \mathbb{R}} \#\varphi^{-1}(x) < \infty.
\end{equation}
Then 
$C_\varphi$ is bounded on $B^s_{p,q}$ if
\[
\begin{dcases}
\text{$\varphi$ is locally absolutely continuous and $\mathcal{M}(\varphi) < \infty$}\quad &\text{if }0<s<\frac{1}{p},\\
\text{$\varphi$ is Lipschitz and $U(\varphi)<\infty$}\quad &\text{if }\frac{1}{p}< s <1.
\end{dcases}
\]
\end{thm}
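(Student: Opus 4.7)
The plan is to work with the difference characterization (taking $m=1$, since $0<s<1$),
\[
\|f\|_{B^s_{p,q}}\sim \|f\|_{L^p}+\left(\int_{|h|\le 1}|h|^{-sq}\|\Delta_h f\|_{L^p}^q\frac{dh}{|h|}\right)^{1/q},
\]
and to estimate the two pieces of $\|f\circ\varphi\|_{B^s_{p,q}}$ separately in the two regimes $s<1/p$ and $s>1/p$.

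First I would handle the $L^p$ bound. In Case 1 ($s<1/p$) the assumption $\mathcal{M}(\varphi)<\infty$ says exactly that the push-forward of Lebesgue measure under $\varphi$ is dominated by $\mathcal{M}(\varphi)$ times Lebesgue measure, so a layer-cake computation yields $\|f\circ\varphi\|_{L^p}^p \le \mathcal{M}(\varphi)\|f\|_{L^p}^p$. In Case 2 ($s>1/p$) the embedding $B^s_{p,q}\hookrightarrow L^\infty$ is available; partitioning the target line into unit intervals $J_n=[n,n+1)$ and using $U(\varphi)<\infty$ to bound $|\varphi^{-1}(J_n)|\le U(\varphi)$, one obtains
\[
\|f\circ\varphi\|_{L^p}^p \le U(\varphi)\sum_{n\in\mathbb Z}\sup_{J_n}|f|^p \lesssim \|f\|_{B^s_{p,q}}^p,
\]
the last step being a local Besov--Sobolev estimate $\sup_{J_n}|f|^p\lesssim \|f\|_{B^s_{p,q}(J_n)}^p$ summed with bounded overlap.

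The main content is the seminorm. The central pointwise identity is
\[
f(\varphi(x+h))-f(\varphi(x))=\Delta_{k(x,h)}f(\varphi(x)),\qquad k(x,h):=\varphi(x+h)-\varphi(x).
\]
In Case 2, Lipschitz gives $|k(x,h)|\le L|h|$, so $|\Delta_h(f\circ\varphi)(x)|\le D_{L|h|}f(\varphi(x))$, where $D_tf(y):=\sup_{|k|\le t}|\Delta_kf(y)|$ is the pointwise maximal difference. Condition (F) together with continuity of $\varphi$ allows a decomposition of $\mathbb R$ into at most $N=\sup_x\#\varphi^{-1}(x)$ maximal monotonicity intervals of $\varphi$; on each piece $\varphi$ is a Lipschitz homeomorphism onto its image, so the push-forward estimate coming from $U(\varphi)<\infty$ applies and reduces the seminorm estimate to controlling $\int|D_{L|h|}f(y)|^p\,dy$. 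A standard dyadic/Hardy-type argument then yields
\[
\int_0^1 t^{-sq}\|D_tf\|_{L^p}^q \frac{dt}{t}\lesssim \int_0^1 t^{-sq}\omega_p(f,t)^q\frac{dt}{t}\sim \|f\|_{B^s_{p,q}}^q,
\]
where $\omega_p(f,t)=\sup_{|k|\le t}\|\Delta_kf\|_{L^p}$ is the usual modulus of smoothness. Case 1 is parallel: $|k(x,h)|\le\int_x^{x+h}|\varphi'(u)|\,du$, and the push-forward is controlled by $\mathcal{M}(\varphi)<\infty$, with absolute continuity of $\varphi$ replacing the Lipschitz bound.

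The principal obstacle is the change-of-variables step. Since $\varphi$ is only assumed locally absolutely continuous (Case 1) or Lipschitz (Case 2) with bounded fibres, and $\varphi'$ may vanish on a positive-measure set, the usual diffeomorphic change of variables is unavailable; the monotonicity decomposition provided by (F), combined with the measure-theoretic control $\mathcal{M}(\varphi)$ or $U(\varphi)$, is what saves the argument. A secondary technical point is the passage from the inside-sup maximal function $D_tf$ to the outside-sup modulus $\omega_p(f,t)$, which is handled by the extra $h$-integration available in the Besov seminorm combined with a dyadic decomposition.
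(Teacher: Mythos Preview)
The paper does not prove this theorem; it is quoted from Bourdaud with the one-line remark that the argument is by interpolation from the borderline spaces $L^p$, $B^s_{\infty,\infty}$, $W^1_p$, $B^s_{1,1}$. Your $L^p$ estimates are correct (the Case~2 bound is precisely Lemma~\ref{lem:suf1}), but the seminorm step contains a genuine gap.

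The assertion that \eqref{ass:finite} together with continuity forces a decomposition of $\mathbb R$ into at most $N$ maximal monotonicity intervals is false. Take the piecewise-linear map with $\varphi(2n)=n$, $\varphi(2n+1)=n+2$ for $n\in\mathbb Z$: it is Lipschitz, satisfies $U(\varphi)<\infty$ and $\#\varphi^{-1}(y)\le 3$ for every $y$, yet has infinitely many monotonicity intervals. Moreover, even granting a finite decomposition, the next step would still fail: on a monotonicity interval $I$ the restriction $\varphi|_I$ is a Lipschitz homeomorphism onto its image, but $(\varphi|_I)^{-1}$ need not be Lipschitz, so $\int_I g(\varphi(x))\,dx$ is not dominated by $\int g(y)\,dy$. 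The hypothesis $U(\varphi)<\infty$ is a coarse-scale bound and gives no control on the pointwise inverse Jacobian you are implicitly invoking. In Case~1 the push-forward \emph{is} dominated by $\mathcal M(\varphi)$, but now there is no uniform bound $|k(x,h)|\le L|h|$, so the reduction to $D_{L|h|}f$ collapses; ``absolute continuity replacing Lipschitz'' is exactly where the difficulty lies, and your sketch does not address it.

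The reason the interpolation route succeeds is that at the $W^1_p$ endpoint the chain rule manufactures the factor $|\varphi'|$ in the integrand, after which the co-area (Banach indicatrix) formula combined with \eqref{ass:finite} yields
\[
\int_{\mathbb R}|f'(\varphi(x))|^p|\varphi'(x)|^p\,dx
\le \|\varphi'\|_{L^\infty}^{\,p-1}\int_{\mathbb R}|f'(\varphi(x))|^p|\varphi'(x)|\,dx
\le N\,\|\varphi'\|_{L^\infty}^{\,p-1}\|f'\|_{L^p}^p.
\]
This Jacobian weight is precisely what is absent from the raw first-difference Besov seminorm, and interpolation against the elementary endpoints ($B^{s_0}_{\infty,\infty}$ in Case~2, $L^p$ in Case~1) transfers the bound to intermediate $s$.
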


These theorems are obtained by interpolation from the borderline spaces: Lebesgue spaces $L^p$, H\"older spaces $B^s_{\infty,\infty}$, Sobolev spaces $W^1_p$ and Besov spaces $B^s_{1,1}$ for $1\le p \le \infty$ and $0<s<1$. 
Theorem \ref{thm:previous1} was first proved by \cite{BS-1999}, and then, a {\it priori} condition on $\varphi$ was improved in \cite{Bou-2000}. In addition, \cite{Bou-2000} also provided Theorem \ref{thm:previous2} under the weaker assumption \eqref{ass:finite} than \eqref{ass:homeo}. The results on the borderline spaces are well organized in \cite[Section~3]{Bou-2000}. 

\begin{rem}
Let us give remarks on the above theorems. 
\begin{enumerate}[\rm (a)]
    \item It is easy to see that \eqref{ass:homeo} implies \eqref{ass:finite}, as $\sup_{x \in \mathbb{R}} \#\varphi^{-1}(x)=1$ 
    for any homeomorphism $\varphi$. 

    \item The range of $q$ can be easily extended to $0<q\le \infty$ in Theorems \ref{thm:previous1} and \ref{thm:previous2}.

    \item The critical case $s=1/p$ has been only partially studied (see \cite{Bou-2000,BS-1999,Vod-1989,KKSS-2014,KYZ2011}), but it still has open problems.  
\end{enumerate}
\end{rem}

This paper addresses the higher order case $s>1+1/p$. 
In this case, we may assume that $\varphi$ is continuously differentiable in $\mathbb R$ without loss of generality. In fact, it is necessary if $C_\varphi$ is bounded on $B^s_{p,q}$ with $s>1+1/p$ (see Lemma \ref{lem:nec_holder} below). \\

Our main result is the following:

\begin{thm}\label{thm:main1}
    Let $1 < p < \infty$, $0< q \le \infty$ and $s >1 +1/p$. Assume \eqref{ass:finite}. 
    Then $C_\varphi$ is bounded on $B^s_{p,q}$ if and only if $U(\varphi)<\infty$ and $\varphi' \in M(B^{s-1}_{p,q})$.
\end{thm}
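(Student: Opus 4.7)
The plan is to exploit the chain rule $(f\circ\varphi)'=(f'\circ\varphi)\varphi'$ in combination with the lifting characterization $\|g\|_{B^s_{p,q}}\asymp\|g\|_{L^p}+\|g'\|_{B^{s-1}_{p,q}}$, valid for $s>1$. This reformulates the boundedness of $C_\varphi$ at order $s$ as a pointwise multiplier estimate at order $s-1$ together with a composition operator estimate at order $s-1$, so that one can iterate downward until reaching the low-order regime governed by Theorems \ref{thm:previous1}--\ref{thm:previous2}.

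For the sufficiency direction, assume $U(\varphi)<\infty$ and $\varphi'\in M(B^{s-1}_{p,q})$. Since $s-1>1/p$, the embedding $B^{s-1}_{p,q}\hookrightarrow L^\infty_{\mathrm{loc}}$ combined with the multiplier property forces $\varphi'\in L^\infty$, so $\varphi$ is Lipschitz. Applying the lift to $g=f\circ\varphi$ gives
\[
\|f\circ\varphi\|_{B^s_{p,q}}\asymp\|f\circ\varphi\|_{L^p}+\|(f'\circ\varphi)\varphi'\|_{B^{s-1}_{p,q}}.
\]
The $L^p$-term is bounded by partitioning $\mathbb R$ into unit intervals, using $|\varphi^{-1}(I)|\le U(\varphi)$ together with a uniform-local embedding $B^s_{p,q}\hookrightarrow L^\infty$. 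The multiplier estimate yields $\|(f'\circ\varphi)\varphi'\|_{B^{s-1}_{p,q}}\le\|\varphi'\|_{M(B^{s-1}_{p,q})}\|f'\circ\varphi\|_{B^{s-1}_{p,q}}$, reducing matters to a composition bound at order $s-1$. I would conclude by induction: when $s-1\in(1/p,1]$, Theorems \ref{thm:previous1}--\ref{thm:previous2} apply since $\varphi$ is Lipschitz and $U(\varphi)<\infty$; otherwise the inductive step relies on an embedding $M(B^{s-1}_{p,q})\hookrightarrow M(B^\sigma_{p,q})$ for $\sigma\in(1/p,s-1)$, itself a consequence of the standard identification of such multiplier spaces with intersections of uniformly-local Besov spaces and $L^\infty$.

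For the necessity direction, Lemma \ref{lem:nec_holder} first ensures $\varphi\in C^1(\mathbb R)$. The bound $U(\varphi)<\infty$ follows by testing $C_\varphi$ on translates $\psi(\cdot-k)$ of a fixed bump whose $B^s_{p,q}$-norm is translation-invariant: since $\psi$ is bounded below on a subinterval, the estimate $\|C_\varphi\psi(\cdot-k)\|_{L^p}\lesssim\|\psi\|_{B^s_{p,q}}$ forces a uniform bound on $|\varphi^{-1}(I)|$ for intervals $I$ of fixed positive length, whence $U(\varphi)<\infty$. For $\varphi'\in M(B^{s-1}_{p,q})$, given $g\in B^{s-1}_{p,q}$ I would take a primitive $f$ with $f'=g$ and $f\in B^s_{p,q}$ locally with comparable norm; the chain rule and the lift then yield
\[
\|(g\circ\varphi)\varphi'\|_{B^{s-1}_{p,q}}\le C\|g\|_{B^{s-1}_{p,q}}.
\]
The remaining step is to pass from this to $\|g\varphi'\|_{B^{s-1}_{p,q}}\le C\|g\|_{B^{s-1}_{p,q}}$, which I would accomplish by localizing to pieces on which $\varphi$ is a $C^1$-diffeomorphism (using \eqref{ass:finite} to cover $\mathbb R$ by finitely many such pieces above each image point), pulling $g$ back via $\varphi^{-1}$, and patching.

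The principal obstacle I anticipate is precisely this last step: transferring the composition-weighted bound to a clean multiplier estimate demands a careful local inversion and patching argument that preserves the Besov quasi-norm, leveraging both \eqref{ass:finite} and the $C^1$-regularity of $\varphi$. A secondary difficulty is the induction in the sufficient direction, which must simultaneously track the multiplier-space embeddings and the low-order composition theorems along the entire Besov scale without losing the role of $U(\varphi)<\infty$.
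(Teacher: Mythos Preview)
Your sufficiency argument matches the paper's (Lemmas~\ref{lem:ii-i} and~\ref{lem:iii-ii}): chain rule and lifting reduce matters to a multiplier bound plus boundedness of $C_\varphi$ at level $s-1$, and one inducts down to Theorem~\ref{thm:previous2}. One caveat: your claimed identification of $M(B^\sigma_{p,q})$ with a uniformly-local Besov space fails when $q<p$ (Theorem~\ref{thm:pm-NS}\,(i)); the paper bridges the gap region $s-1\in[1,1+1/p]$ by interpolation (Lemma~\ref{lem:3.6}) rather than by a multiplier-space embedding.

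The genuine gap is in the necessity direction. First, for a generic $g\in B^{s-1}_{p,q}$ the primitive is not in $L^p$ and hence not in $B^s_{p,q}$, so the assumed boundedness of $C_\varphi$ cannot be applied to it; ``locally with comparable norm'' does not repair this, since you ultimately need a global multiplier bound. Second, and more seriously, your transfer step is circular: passing from $\|(h\circ\varphi)\varphi'\|_{B^{s-1}_{p,q}}\lesssim\|h\|_{B^{s-1}_{p,q}}$ to $\|g\varphi'\|_{B^{s-1}_{p,q}}\lesssim\|g\|_{B^{s-1}_{p,q}}$ by setting $h=g\circ\varphi^{-1}$ requires $\|g\circ\varphi^{-1}\|_{B^{s-1}_{p,q}}\lesssim\|g\|_{B^{s-1}_{p,q}}$, i.e.\ boundedness of $C_{\varphi^{-1}}$ on $B^{s-1}_{p,q}$, which is neither assumed nor derivable here---under \eqref{ass:finite} the map $\varphi$ need not even be injective, and where it is, $(\varphi^{-1})'=1/\varphi'$ has no uniform control. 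Local $C^1$-inversion and patching cannot save this, because the local inverses carry no uniform Besov bounds. The paper proceeds quite differently: it uses the Nguyen--Sickel characterization $M(B^{s-1}_{p,q})=M^{s-1}_{p,q}$ (Theorem~\ref{thm:pm-NS}\,(ii)), so one only has to bound $\varphi'\sum_{z}c_z\psi(\cdot-z)$ uniformly over $\|\{c_z\}\|_{\ell^p}\le1$; then, rather than inverting $\varphi$, it fixes $f\in C^\infty_0$ with $f(x)=x$ on a large interval so that $(C_\varphi f_a)'=\varphi'$ identically on $\varphi^{-1}$ of that interval, and applies the assumed boundedness to the genuine $B^s_{p,q}$-function $\sum_z|c_z|f_{a_z}$. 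Hypothesis \eqref{ass:finite} enters only through a combinatorial partition of $\mathbb Z$ (Lemmas~\ref{lem: elementary lemma for cond F} and~\ref{lem: splitting lemma}) that separates supports and makes the $\ell^p$-sum over $z$ tractable---this is precisely where the case $q<p$, for which $M(B^{s-1}_{p,q})\neq B^{s-1}_{p,q,\mathrm{unif}}$, demands real work (cf.\ Remark~\ref{rem:ass-finite}).
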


For the case $p=\infty$, we have the following:

\begin{thm}\label{thm:main2}
     Let $0< q \le \infty$ and $s >1$. Then $C_\varphi$ is bounded on $B^s_{\infty,q}$ if and only if $\varphi' \in  M(B^{s-1}_{\infty,q})$.
\end{thm}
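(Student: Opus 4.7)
The plan is to pass everything through the lifting equivalence
\[
\|f\|_{B^s_{\infty,q}} \sim \|f\|_{L^\infty}+\|f'\|_{B^{s-1}_{\infty,q}}\qquad(s>1),
\]
combined with the chain rule $(f\circ\varphi)'=(f'\circ\varphi)\varphi'$. A basic observation I want to use repeatedly is that for $t>0$ the space $B^t_{\infty,q}$ is a multiplication algebra that embeds into $L^\infty$, so $M(B^t_{\infty,q})$ coincides (as a set, with equivalent norms) with $B^t_{\infty,q}$ itself; in particular $M(B^{s-1}_{\infty,q})\subset M(B^{t}_{\infty,q})$ whenever $0<t\le s-1$, via the embedding $B^{s-1}_{\infty,q}\hookrightarrow B^t_{\infty,q}$.

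For the sufficiency, suppose $\varphi'\in M(B^{s-1}_{\infty,q})$. Since the constant function $1$ lies in $B^{s-1}_{\infty,q}$, multiplying by it yields $\varphi'\in B^{s-1}_{\infty,q}\subset L^\infty$, so $\varphi$ is Lipschitz. The chain rule and lifting give
\[
\|f\circ\varphi\|_{B^s_{\infty,q}} \lesssim \|f\|_{L^\infty}+\|\varphi'\|_{M(B^{s-1}_{\infty,q})}\,\|f'\circ\varphi\|_{B^{s-1}_{\infty,q}},
\]
so the problem reduces to the boundedness of $C_\varphi$ on $B^{s-1}_{\infty,q}$. I would close this by induction on $\lceil s\rceil$: in the base case $1<s\le 2$ one has $0<s-1\le 1$, and boundedness of $C_\varphi$ on the Hölder/Zygmund-type Besov space $B^{s-1}_{\infty,q}$ follows from $\varphi$ being Lipschitz via standard finite-difference estimates; in the inductive step $s>2$ I apply the present theorem at level $s-1$, whose hypothesis $\varphi'\in M(B^{s-2}_{\infty,q})$ is supplied by the embedding noted above.

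For the necessity, assume $C_\varphi$ is bounded on $B^s_{\infty,q}$. By Lemma \ref{lem:nec_holder} (applied in the regime $s>1+1/p=1$) we have $\varphi\in C^1(\mathbb R)$. It then suffices to show $\varphi'\in B^{s-1}_{\infty,q}\cap L^\infty$, since this coincides with $M(B^{s-1}_{\infty,q})$. I test the boundedness on functions of the form $f_{y_0}(y)=(y-y_0)\psi(y-y_0)$ with $\psi\in C_c^\infty(\mathbb R)$, $\psi\equiv 1$ near $0$; each such $f_{y_0}$ lies in $B^s_{\infty,q}$ with uniform norm in $y_0$. The chain rule gives
\[
(f_{y_0}\circ\varphi)'(x)=\varphi'(x)\,\psi(\varphi(x)-y_0)+(\varphi(x)-y_0)\,\psi'(\varphi(x)-y_0)\,\varphi'(x),
\]
and on the set $\{x:|\varphi(x)-y_0|\le \delta\}$ this equals $\varphi'(x)$. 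Choosing $y_0$ to run over a lattice and applying the hypothesis $(f_{y_0}\circ\varphi)'\in B^{s-1}_{\infty,q}$ uniformly, together with a partition of unity adapted to this covering and the algebra property of $B^{s-1}_{\infty,q}$, yields $\varphi'\in B^{s-1}_{\infty,q}$, hence $\varphi'\in M(B^{s-1}_{\infty,q})$.

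The main technical obstacle is the necessity step: converting the pointwise chain-rule identity into a uniform Besov bound on $\varphi'$ demands careful choice of test functions whose $B^s_{\infty,q}$-norm is controlled independently of the base point $y_0$, together with a localization/gluing argument via the algebra structure. On the sufficiency side, the induction hinges on the monotonicity $M(B^{s-1}_{\infty,q})\subset M(B^{s-2}_{\infty,q})$, which is where the identification of Besov multipliers with the Besov space itself (for $p=\infty$, $t>0$) is essential.
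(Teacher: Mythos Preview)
Your sufficiency argument matches the paper's: the chain rule plus lifting reduces to boundedness of $C_\varphi$ on $B^{s-1}_{\infty,q}$, which is handled by induction with the base case $0<s-1<1$ coming from the Lipschitz property of $\varphi$ (Theorem~\ref{thm:low-infty} and Corollary~\ref{cor:low-infty}), and the induction step using $M(B^{s-1}_{\infty,q})=B^{s-1}_{\infty,q}\hookrightarrow B^{s-2}_{\infty,q}=M(B^{s-2}_{\infty,q})$. (Minor point: Lemma~\ref{lem:nec_holder} is stated for $p<\infty$, so you cannot literally cite it; but the argument adapts trivially.)

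The necessity argument, however, has a genuine gap when $q<\infty$. Your localized test functions $f_{y_0}$ give $(f_{y_0}\circ\varphi)'\in B^{s-1}_{\infty,q}$ uniformly in $y_0$, and after multiplying by a partition of unity $\{\eta_j\}$ on the $x$-axis you obtain uniform bounds $\sup_j\|\eta_j\varphi'\|_{B^{s-1}_{\infty,q}}<\infty$. But this is precisely the statement $\varphi'\in B^{s-1}_{\infty,q,\mathrm{unif}}$, and by the remark following Theorem~\ref{thm:pm-NS} one has $B^{s-1}_{\infty,q,\mathrm{unif}}=B^{s-1}_{\infty,q}$ \emph{only} when $q=\infty$. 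For $q<\infty$ the inclusion is strict, so the gluing step fails: the Besov seminorm involves $\int_{|h|\le1}|h|^{-(s-1)q}\|\Delta_h^m\varphi'\|_{L^\infty}^q\,dh/|h|$, and bounding $\|\Delta_h^m\varphi'\|_{L^\infty}\lesssim\sup_j\|\Delta_h^m(\eta_j\varphi')\|_{L^\infty}$ leaves a $\sup_j$ \emph{inside} the $h$-integral, which cannot be pulled out.

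The paper circumvents this by testing not on compactly supported functions but on a \emph{single global} function $g\in B^s_{\infty,q}\cap C^\infty$ constructed so that $g'\equiv(-1)^k$ on a periodic family of long intervals $[2(4k-1)m,2(4k+1)m]$. On the preimage of (a slight shrinkage of) these intervals one has $\Delta_h^m\varphi'(x)=\Delta_h^m(C_\varphi g)'(x)$ for all $|h|\le1$, so the full $L^\infty$-in-$x$, $L^q$-in-$h$ seminorm of $\varphi'$ over that preimage is controlled by $\|C_\varphi g\|_{B^s_{\infty,q}}$. Four translates of $g$ then cover $\mathbb R$. The point is that a single test function already captures $\varphi'$ on a set of full density, so no infinite summation is needed.
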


Let us here give some remarks on Theorems \ref{thm:main1} and \ref{thm:main2}.
The major difference from the lower order case $0<s<1$ is that the product of functions appears: $C_\varphi f \in B^s_{p,q}$ is roughly $$(C_\varphi f)'=\varphi' \cdot C_\varphi f' \in B^{s-1}_{p,q}$$ in the higher order case $s \ge1$, which implies that the composition operator $C_\varphi$ on $B^{s}_{p,q}$ is related to the pointwise multiplier $\varphi'$ of $B^{s-1}_{p,q}$.
From this, the sufficient condition is derived from a combination of the result on the lower order case $1/p<s<1$ (Theorem~\ref{thm:previous2}) with an inductive argument (see Lemmas \ref{lem:ii-i}, \ref{lem:iii-ii} and \ref{lem:iii-ii_infty}). 
The main part of our proofs 
is the necessity of $\varphi' \in M(B^{s-1}_{p,q})$ (see Lemmas \ref{lem:i-iii} and \ref{lem:i-iii_infty}).
The critical tool is the characterizations of $M(B^{s-1}_{p,q})$. There are many works dealing with pointwise multipliers of Besov spaces and, in particular, 
Nguyen and Sickel \cite{NS-2018} presented the characterizations for $s>1+1/p$:
\[
M(B^{s-1}_{p,q})
= \left\{
f \in L^1_{\mathrm{loc}} \, \bigg|\, 
\sup_{\|\{c_z\}_z\|_{\ell ^p(\mathbb Z)} \le 1}
\Big\|f \sum_{z\in \mathbb Z} c_z \psi(\cdot -z)\Big\|_{B^{s-1}_{p,q}}< \infty
\right\}
\]
for $p\not = \infty$, and $M(B^{s-1}_{\infty,q})=B^{s-1}_{\infty,q}$ (see also Subsection \ref{sub:2.3}). Here, 
$\psi \in C^\infty_0$ is a non-negative function on $\mathbb R$ defined by  \eqref{eq.psi}.
The proofs of the theorems are given in Subsections \ref{sub:3.1} and \ref{sub:3.2}, respectively.

\begin{rem}
If either ``$0< p\le 1$, $0< q \le \infty$ and $s >1 +1/p$" or ``$0<q\le p \le 1$ and $s=1+1/p$", then it is proved that the conditions $U(\varphi)<\infty$ and $\varphi' \in M(B^{s-1}_{p,q})$ are necessary for the boundedness of $C_\varphi$ on $B^s_{p,q}$ under the assumption \eqref{ass:finite} (see Lemma~\ref{lem:i-iii} and Remark \ref{rem:critical}).  It is still open if these are sufficient as long as we know. 
The case $1\le s \le 1+1/p$ remains open.
\end{rem}

We also mention the case of Sobolev spaces $H^s_{p}$. In this case, we also obtain a similar result to Theorem \ref{thm:main1}. This result generalizes the previous result for Sobolev spaces of positive integer order derivatives $s \in \mathbb N$, $s\ge2$, in \cite[Remark 1.14]{BS-1999}.  
The characterization of $M(H^{s-1}_{p})$ with $s>1+1/p$ was presented by Strichartz \cite{Str-1967} (see also \cite{sic-1999}). 
See Section \ref{sec:4} for the details for $H^s_{p}$.\\

We also discuss necessary and sufficient conditions on homeomorphisms $\varphi$ for automorphism of $C_\varphi$ on $B^s_{p,q}$ to hold. 
Here, the automorphism means that $C_\varphi$ is bijective and both $C_\varphi, C_\varphi^{-1}$ ($=C_{\varphi^{-1}}$) are bounded on $B^s_{p,q}$.
As corollaries of Theorems~\ref{thm:main1} and \ref{thm:main2}, 
we immediately have the following:

\begin{cor}\label{cor:main1}
Let $1 < p \le \infty$, $0< q \le \infty$ and $s >1 +1/p$. 
Assume \eqref{ass:homeo}. 
Then $C_\varphi$ is automorphic on $B^s_{p,q}$ if and only if 
$\varphi', (\varphi^{-1})'\in M(B^{s-1}_{p,q})$.
\end{cor}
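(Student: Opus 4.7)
The plan is to obtain both directions as a direct application of Theorems~\ref{thm:main1} and \ref{thm:main2} to $\varphi$ and $\varphi^{-1}$ in tandem. First observe that under \eqref{ass:homeo}, $\varphi^{-1}$ is also a homeomorphism, so both maps trivially satisfy \eqref{ass:finite}, and $C_{\varphi^{-1}}=(C_\varphi)^{-1}$ on measurable functions. Thus $C_\varphi$ is automorphic on $B^s_{p,q}$ if and only if both $C_\varphi$ and $C_{\varphi^{-1}}$ are bounded on $B^s_{p,q}$.

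For the necessity direction, I would simply apply Theorem~\ref{thm:main1} (when $1<p<\infty$) or Theorem~\ref{thm:main2} (when $p=\infty$) to $\varphi$ and to $\varphi^{-1}$, which immediately yields $\varphi',(\varphi^{-1})'\in M(B^{s-1}_{p,q})$. No extra work is needed here.

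For the sufficiency direction, the nontrivial step beyond quoting the main theorems is to verify the finiteness of $U(\varphi)$ (and, when $p<\infty$, of $U(\varphi^{-1})$ for the inverse map). The key auxiliary fact is that under the standing assumption $s>1+1/p$ one has $M(B^{s-1}_{p,q})\hookrightarrow L^\infty$. For $p=\infty$ this is immediate from $M(B^{s-1}_{\infty,q})=B^{s-1}_{\infty,q}\hookrightarrow L^\infty$ (since $s-1>0$). For $1<p<\infty$ I would test an arbitrary multiplier $f$ against the translates $\psi(\cdot-z)$: these have $B^{s-1}_{p,q}$-norms uniformly bounded in $z$, so $\|f\,\psi(\cdot-z)\|_{B^{s-1}_{p,q}}\le C\,\|f\|_{M(B^{s-1}_{p,q})}$ uniformly; combining with the embedding $B^{s-1}_{p,q}\hookrightarrow L^\infty$ (valid since $s-1>1/p$) and the fact that $\psi$ is bounded below on a fixed interval whose integer translates cover $\mathbb R$, one concludes $f\in L^\infty$.

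With this in hand, $(\varphi^{-1})'\in L^\infty$ makes $\varphi^{-1}$ Lipschitz, so $U(\varphi)=\sup_{|I|=1}|\varphi^{-1}(I)|\le \mathrm{Lip}(\varphi^{-1})<\infty$; symmetrically $\varphi'\in L^\infty$ gives $U(\varphi^{-1})<\infty$. Feeding these back into Theorems~\ref{thm:main1} and \ref{thm:main2} proves the boundedness of both $C_\varphi$ and $C_{\varphi^{-1}}$, hence automorphism. There is no serious obstacle here: the whole content of the corollary reduces to the embedding $M(B^{s-1}_{p,q})\hookrightarrow L^\infty$, which is a standard consequence of the characterization of multipliers recalled in the introduction, together with the already established main theorems.
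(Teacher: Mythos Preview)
Your proposal is correct and matches the paper's approach: the paper states Corollary~\ref{cor:main1} as an immediate consequence of Theorems~\ref{thm:main1} and~\ref{thm:main2} without giving an explicit proof, and your argument spells out exactly the missing detail---namely that the condition $U(\varphi)<\infty$ (needed to invoke Theorem~\ref{thm:main1} when $p<\infty$) follows from $(\varphi^{-1})'\in M(B^{s-1}_{p,q})\hookrightarrow L^\infty$. Your direct verification of this embedding via testing against $\psi(\cdot-z)$ is fine, though you could equally cite the chain $M(B^{s-1}_{p,q})\hookrightarrow B^{s-1}_{p,q,\mathrm{unif}}\hookrightarrow L^\infty$ from Lemma~\ref{lem:relation} and Proposition~\ref{prop:sobolev}, which is how the paper argues the analogous step in the proof of Lemma~\ref{lem:iii-ii}.
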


\begin{rem}
Let us give remarks and related works on the automorphism of $C_\varphi$ on Besov spaces and Sobolev spaces.
\begin{enumerate}[\rm (a)]

\item For $0<s<1$, the characterizations have been studied in \cite{Bou-2000,BS-1999,Vod-1989}.

\item A similar result for the Sobolev spaces to Corollary~\ref{cor:main1} can be also obtained from Theorem \ref{thm:main-TL}.
\end{enumerate}
\end{rem}

\begin{rem}
For all parameters $s\in \mathbb R$ and $0<p,q\le \infty$ (except for $p=\infty$ for the Sobolev spaces), some sufficient conditions for boundedness and automorphism of $C_\varphi$ on $B^s_{p,q}$ and $H^s_{p}$ are known in \cite[Theorem 4.46]{saw_2018} and \cite[Theorem in Subsection 4.3.2]{Tri_1992}, respectively (see also the comments on Theorem~4.46 on page 563 in \cite{saw_2018} for the boundedness). 
Our results improve these for $1<p\le \infty$, $0<q\le \infty$ and $s>1+1/p$.
\end{rem}

\subsection*{Acknowledgement}
This work is based on the discussions at FY2022 IMI Joint Usage Research Program Short-term Joint Research ``Boundedness of Koopman operators on Besov spaces and its applications''.
The first author is supported by Young Scientists Research (No.19K14581), JSPS.
The second author is supported by JST ACTX Grant (No. JPMJAX2004), Japan.
The first and second authors are supported by JST CREST (No. JPMJCR1913), Japan.
The authors thank Professor V. K. Nguyen for giving some useful comments on pointwise multipliers of Besov spaces.

\section{Preliminaries}\label{sec:2}

Let us introduce some notations and definitions used in this paper. For $a, b \ge0$, the symbols $a \lesssim b$ and $b \gtrsim a$ mean that there exists a constant $C>0$ such that $a \le C b$.
The symbol $a \sim b$ means that $a \lesssim b$ and $b \lesssim a$ happen simultaneously. 
We define $\chi_E$ by the characteristic function of $E\subset \mathbb R$. 
We use the notation 
$\|\cdot\|_{X \to Y}$ for the operator norm from a quasi-normed space $X$ to another one $Y$, i.e.,
\[
\|T\|_{X\to Y}
:=
\sup_{\|f\|_{X} = 1} \|Tf\|_{Y}
\]
for an operator $T$ from $X$ into $Y$. For quasi-normed spaces $X$ and $Y$, the notation $X \hookrightarrow Y$ means that $X$ is continuously embedded in $Y$, i.e., $X$ is a subset of $Y$ and there exists a constant $C>0$ such that 
\[
\|f\|_{Y} \le C\|f\|_{X} \quad \text{for any }f\in X.
\]
We denote 
by $BUC=BUC(\mathbb R)$ 
the Banach space of all uniformly continuous and bounded functions on $\mathbb R$ equipped with the supremum norm. 
For $k \in \mathbb N \cup \{\infty\}$, we denote by $C^k=C^k(\mathbb R)$ the space of $k$ times continuously differentiable functions on $\mathbb R$,  
by $C^\infty_0=C^\infty_0 (\mathbb R)$ the space of smooth functions with compact support in $\mathbb R$,  
 by $\mathcal S =\mathcal S(\mathbb R)$ the Schwartz space, which consists of all rapidly decreasing infinitely differentiable functions on $\mathbb R$, and by $\mathcal S' = \mathcal S'(\mathbb R)$ the space of all tempered distributions on $\mathbb R$. The space $\mathcal S'$ is the dual space of $\mathcal S$. We denote by $L^p=L^p(\mathbb R)$ the Lebesgue spaces and 
 by $W^k_p=W^k_p(\mathbb R)$ the Sobolev space for $k\in \mathbb N$ and $0<p\le \infty$.

\subsection{Besov spaces}\label{sub:2.1}
Let $\{\varphi_j\}_{j=0}^\infty$ be the Littlewood-Paley decomposition. More precisely, let $\varphi_0 \in C^\infty_0$ be a non-negative function with $\varphi_0(x)=1$ for $|x|\le 1$ and $\varphi_0(x) =0$ for $|x|\ge2$, and define $\varphi_j$ by 
\[
\varphi_j(x) :=
\varphi_0(2^{-j}x)-\varphi_0(2^{-j+1}x),\quad x\in \mathbb R
\]
for $j\in \mathbb N$. Then $\{\varphi_j\}_{j=0}^\infty$ is the partition of the unity such that 
\[
\sum_{j=0}^\infty \varphi_j(x) = 1,\quad x\in \mathbb R.
\]

\begin{definition}\label{def:fourier}
Let $s \in \mathbb R$ and $0<p,q\le \infty$. Then the Besov space $B^s_{p,q} = B^s_{p,q}(\mathbb R)$ is defined by the collection of all tempered districutions $f \in \mathcal S'$ such that 
\[
\|f\|_{B^s_{p,q,\varphi_0}}:=
\left(
\sum_{j=0}^\infty 
2^{jsq}\left\|\mathscr F^{-1}[\varphi_j \mathscr Ff]\right\|_{L^p}^q
\right)^\frac{1}{q}<\infty
\]
(with the usual modification for $q=\infty$), where $\mathscr F$ and $\mathscr F^{-1}$ are the Fourier transform and its inverse, respectively.
\end{definition}

The Besov space has the characterization by differences. 

\begin{prop}
Let $0 < p,q\le \infty$ and $s > \max \{0, 1/p -1\}$. Then the Besov space $B^s_{p,q}$ is the collection of all functions $f \in L^1_{\mathrm{loc}}$ such that 
\begin{equation}\label{def:Besov-m}
\|f\|_{B^s_{p,q,m}} 
:= 
\|f\|_{L^p}
+
\left(
\int_{|h|\le 1}
|h|^{-sq}
\|\Delta_h^mf\|_{L^p}^q
\frac{dh}{|h|}
\right)^\frac{1}{q} < \infty
\end{equation}
(with the usual modification for $q=\infty$), 
where $m\in \mathbb N$ with $m>s$.
Here, the difference operator $\Delta^m_h$ of order $m\in \mathbb N$ is defined by 
\begin{equation}\label{def:difference_op}
\Delta^m_hf(x) := \sum_{j=0}^m(-1)^{m-j}
\begin{pmatrix}
m\\
j 
\end{pmatrix}
f(x + j h),\quad x, h\in \mathbb R
\end{equation}
and $\Delta^0_h$ is the identity operator.
\end{prop}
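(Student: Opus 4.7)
The plan is to establish the equivalence of the Fourier-analytic norm $\|\cdot\|_{B^s_{p,q,\varphi_0}}$ from Definition \ref{def:fourier} with the difference norm $\|\cdot\|_{B^s_{p,q,m}}$ on $L^1_{\mathrm{loc}}$, proving $\lesssim$ in both directions. This is classical (see Triebel's books), and I would follow the Littlewood-Paley/difference interchange strategy, splitting into two inequalities.

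For $\|f\|_{B^s_{p,q,m}} \lesssim \|f\|_{B^s_{p,q,\varphi_0}}$, I would decompose $f = \sum_{j\ge 0} f_j$ with $f_j := \mathscr F^{-1}[\varphi_j \mathscr F f]$ (convergent in $\mathcal S'$), and estimate $\|\Delta_h^m f_j\|_{L^p}$ in two regimes depending on $2^j |h|$. When $2^j|h|\le 1$, since $f_j$ is band-limited in $\{|\xi|\le 2^{j+1}\}$, Bernstein's inequality yields $\|f_j^{(m)}\|_{L^p}\lesssim 2^{jm}\|f_j\|_{L^p}$, hence $\|\Delta_h^m f_j\|_{L^p}\lesssim |h|^m\|f_j^{(m)}\|_{L^p}\lesssim (2^j|h|)^m\|f_j\|_{L^p}$. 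When $2^j|h|>1$ the trivial bound $\|\Delta_h^m f_j\|_{L^p}\lesssim \|f_j\|_{L^p}$ suffices (using $\ell^{\min(1,p)}$-subadditivity for the $L^p$ quasi-norm when $p<1$). Summing over $j$, integrating against $|h|^{-sq-1}dh$ on $|h|\le 1$, and applying the discrete Hardy inequality produces the bound; the two geometric series converge because $0<s<m$.

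For the reverse inequality $\|f\|_{B^s_{p,q,\varphi_0}}\lesssim \|f\|_{B^s_{p,q,m}}$, I would reconstruct $f_j$ from differences for $j\ge 1$. Since $\widehat{\Delta_h^m f}(\xi) = (e^{ih\xi}-1)^m \hat f(\xi)$, choose for each $j$ a Schwartz function $\Phi_j$ with Fourier support in an annulus $\{c 2^j\le|\xi|\le C 2^j\}$ covering $\mathrm{supp}\,\varphi_j$, on which $(e^{i 2^{-j}\xi}-1)^m$ is uniformly bounded away from $0$; then there is a kernel $K_j$ with $\|K_j\|_{L^1}=O(1)$ such that $f_j = K_j * \Delta_{2^{-j}}^m f$. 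Young's inequality yields $\|f_j\|_{L^p}\lesssim \|\Delta_{2^{-j}}^m f\|_{L^p}$; multiplying by $2^{jsq}$, summing in $\ell^q$, and comparing with the integral over $|h|\in [2^{-j-1},2^{-j}]$ recovers the Littlewood-Paley norm. The low-frequency piece $f_0$ and the $L^p$ piece are absorbed through the embedding $B^s_{p,q,m}\hookrightarrow L^p$, which holds under $s>\max\{0,1/p-1\}$.

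The main obstacle is the reverse direction in the quasi-Banach range $0<p<1$, where Young's inequality is no longer available. There one must replace convolution estimates by the Peetre maximal function $(f_j)^*_a(x)=\sup_{y\in\mathbb R}|f_j(x-y)|(1+2^j|y|)^{-a}$ together with the Plancherel--Polya--Nikolskij inequality. The assumption $s>1/p-1$ enters precisely at this step, ensuring that one can choose $a>1/p$ so the Peetre maximal function is pointwise controlled by differences and the reconstruction formula remains $L^p$-bounded.
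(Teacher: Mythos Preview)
The paper does not prove this proposition at all; it is stated in the preliminaries (Subsection~\ref{sub:2.1}) as a known characterization, with no proof given and only the remark that $B^s_{p,q}$ is independent of the choice of $\varphi_0$ and $m$. Your sketch is essentially the standard argument (as in Triebel, \emph{Theory of Function Spaces}, \S2.5.10--2.5.12), and is correct in outline. A couple of minor points: in the reverse direction your single-step identity $f_j=K_j*\Delta_{2^{-j}}^m f$ works because on $\supp\varphi_j$ one has $|2^{-j}\xi|\in[1/2,2]\subset(0,2\pi)$, so $(e^{i2^{-j}\xi}-1)^{-m}$ is smooth there and $K_j$ has uniform $L^1$-norm; and the treatment of $f_0$ is simply $\|f_0\|_{L^p}\lesssim\|f\|_{L^p}$ (via convolution for $p\ge1$, or via the Nikolskii-type bound for $p<1$), not an ``embedding $B^s_{p,q,m}\hookrightarrow L^p$'' which is just the trivial inequality. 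Your identification of the $p<1$ obstacle and the Peetre maximal function remedy is the right one.
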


Note that $B^s_{p,q}$ is independent of the choice of $\varphi_0$ and $m$.
We write the notation $\|f\|_{B^s_{p,q}}$ as  \eqref{def:Besov-m} and the notation $|f|_{B^s_{p,q}}$ as the second term in \eqref{def:Besov-m}: 
\[
|f|_{B^s_{p,q}}
:=
\left(
\int_{|h|\le 1}
|h|^{-sq}
\|\Delta_h^mf\|_{L^p}^q
\frac{dh}{|h|}
\right)^\frac{1}{q}.
\]
The space $B^s_{p,q}$ is a quasi-Banach space (a Banach space if $p,q\ge 1$).
The following are known.

\begin{prop}\label{prop:embedding}
Let $s\in \mathbb R$ and $0<p,q\le \infty$. Then the following three statements are equivalent: 
\begin{enumerate}[\rm (i)]
\item $B^{s}_{p,q}\hookrightarrow L^\infty$.
\item $B^{s}_{p,q}\hookrightarrow BUC$.
\item There holds either ``$s>1/p$" or ``$s=1/p$ and $q\le 1$".
\end{enumerate}
\end{prop}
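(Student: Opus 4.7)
The plan is to establish the equivalence by proving the cycle $(ii) \Rightarrow (i) \Rightarrow (iii) \Rightarrow (ii)$. The first implication is immediate from $BUC \hookrightarrow L^\infty$, so the content lies in the other two directions.

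For $(iii) \Rightarrow (ii)$, I would decompose $f = \sum_{j=0}^\infty f_j$ with $f_j := \mathscr F^{-1}[\varphi_j \mathscr F f]$. Since $\mathscr F f_j$ is supported in a ball of radius $\sim 2^j$, Bernstein's inequality yields $\|f_j\|_{L^\infty} \lesssim 2^{j/p}\|f_j\|_{L^p}$, and a second application of Bernstein gives $\|f_j'\|_{L^\infty}<\infty$, placing each $f_j$ in $BUC$. Setting $u_j := 2^{js}\|f_j\|_{L^p}$ so that $\|f\|_{B^s_{p,q,\varphi_0}} = \|(u_j)\|_{\ell^q}$, I would then estimate
\[
\sum_{j\ge 0} \|f_j\|_{L^\infty}
\lesssim \sum_{j\ge 0} 2^{-j(s-1/p)} u_j.
\]
When $s>1/p$ and $q\ge 1$, this is controlled by $\|(u_j)\|_{\ell^q}$ via H\"older against $\ell^{q'}$; when $q\le 1$, the embedding $\ell^q \hookrightarrow \ell^1$ together with $\sup_j 2^{-j(s-1/p)}<\infty$ (which holds since $s\ge 1/p$) gives the same bound. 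Either way the series converges absolutely in $L^\infty$, hence in the closed subspace $BUC$, and the limit agrees with $f$ in $\mathcal S'$, yielding $\|f\|_{BUC} \lesssim \|f\|_{B^s_{p,q,\varphi_0}}$.

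For $(i) \Rightarrow (iii)$, I argue by contraposition and exhibit a Besov-bounded sequence whose values at $0$ blow up. Fix $g\in \mathcal S$ with $\mathscr F g$ supported in $\{1/2\le |\xi|\le 2\}$ and $g(0)>0$, and set $g_j(x):=g(2^jx)$. Because the Fourier supports of the $g_j$ lie in disjoint dyadic annuli, $\varphi_k\, \mathscr F g_j$ is nonzero only for a bounded number of $k$ near $j$, which together with $\|g_j\|_{L^p}\sim 2^{-j/p}$ gives the two-sided equivalence
\[
\Big\|\sum_{j=1}^N a_j g_j\Big\|_{B^s_{p,q,\varphi_0}}
\sim \Big(\sum_{j=1}^N a_j^q 2^{jq(s-1/p)}\Big)^{1/q}
\]
(with the usual modification for $q=\infty$). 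If $s<1/p$, choose $a_j := 2^{j(1/p-s)} j^{-\alpha}$ with $\alpha >1/q$ (or $a_j := 2^{j(1/p-s)}$ when $q=\infty$); if $s=1/p$ and $q>1$, choose $a_j := j^{-\alpha}$ with $1/q<\alpha \le 1$. In each case the right-hand side stays bounded in $N$, while the Schwartz partial sums satisfy $f_N(0) = g(0)\sum_{j=1}^N a_j \to \infty$, contradicting $B^s_{p,q}\hookrightarrow L^\infty$.

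The main obstacle is making the lacunary block equivalence rigorous, as it depends on the almost-orthogonality of the Littlewood--Paley projections acting on frequency-separated blocks; once that is set up, the remainder is a standard interplay between Bernstein's inequality, H\"older, and the $\ell^q\hookrightarrow \ell^1$ embedding for $q\le 1$, combined with a careful choice of exponents in the counterexample.
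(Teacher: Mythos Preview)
Your argument is correct and follows the standard route to this classical embedding. Note, however, that the paper itself does not prove this proposition: immediately after stating it (together with the algebra characterization), the authors simply write that for the details they refer to Nguyen--Sickel \cite{NS-2018}. So there is no in-paper proof to compare against; your write-up supplies what the authors chose to outsource to the literature, and the Bernstein-plus-$\ell^q$ summation for $(iii)\Rightarrow(ii)$ together with the lacunary Fourier-block construction for $\neg(iii)\Rightarrow\neg(i)$ is exactly the approach one finds in the standard references.

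One minor remark on your self-assessment: for the contrapositive direction you only need the \emph{upper} bound in the two-sided equivalence $\|\sum_{j=1}^N a_j g_j\|_{B^s_{p,q}}\sim(\sum a_j^q 2^{jq(s-1/p)})^{1/q}$, since the goal is merely that the Besov norm stays bounded while $f_N(0)\to\infty$. That upper bound is immediate from the bounded overlap of $\supp \mathscr F g_j$ with the Littlewood--Paley windows $\varphi_k$ (together with the $L^p$ quasi-triangle inequality when $p<1$), so the ``main obstacle'' you flag is milder than you suggest.
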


\begin{prop}\label{prop:algebra}
Let $s\in \mathbb R$ and $0<p,q\le \infty$. Then the following statements are equivalent: 
\begin{enumerate}[\rm (i)]
\item $B^{s}_{p,q}$ is a multiplication algebra, i.e, $B^{s}_{p,q}\hookrightarrow M(B^{s}_{p,q})$.

\item There holds either ``$s>1/p$" or ``$s=1/p$, $p\not = \infty$ and $q\le 1$".
\end{enumerate}
\end{prop}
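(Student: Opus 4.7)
The plan is to combine the $L^\infty$-embedding of Proposition \ref{prop:embedding} with the difference characterization \eqref{def:Besov-m} of $\|\cdot\|_{B^s_{p,q}}$. The forward implication (ii) $\Rightarrow$ (i) reduces the algebra property to a discrete Leibniz estimate, while the reverse (i) $\Rightarrow$ (ii) requires explicit counterexamples in each excluded parameter range.

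For (ii) $\Rightarrow$ (i), I would fix $f,g\in B^s_{p,q}$. Since (ii) coincides with condition (iii) of Proposition \ref{prop:embedding}, both factors lie in $L^\infty$, so H\"older immediately gives $\|fg\|_{L^p}\le \|g\|_{L^\infty}\|f\|_{L^p}\lesssim \|f\|_{B^s_{p,q}}\|g\|_{B^s_{p,q}}$. For the semi-norm I would invoke the discrete Leibniz identity
$$\Delta_h^m(fg)(x)=\sum_{k=0}^{m}\binom{m}{k}(\Delta_h^k f)(x+(m-k)h)(\Delta_h^{m-k}g)(x),$$
with $m\in\mathbb N$ chosen larger than $s$. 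The extreme indices $k\in\{0,m\}$ are absorbed via $\|f\|_{L^\infty}\|\Delta_h^m g\|_{L^p}$ and its symmetric counterpart, each of which plugs directly into the definition of $|fg|_{B^s_{p,q}}$ after integration against $|h|^{-sq-1}\,dh$. The mixed indices $1\le k\le m-1$ I would handle with an interpolation bound of the type $\|\Delta_h^k f\|_{L^\infty}\lesssim |h|^{s-1/p}\|f\|_{B^s_{p,q}}$, valid precisely when $s>1/p$, paired with a routine $L^p$-difference bound on $g$. At the critical line $s=1/p$ this inequality degenerates logarithmically, and one must reorganize the $h$-integration dyadically and use the quasi-triangle inequality in $\ell^q$, which is lossless only for $q\le 1$ (and needs $p\ne\infty$ to preserve the $L^\infty$-embedding).

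For (i) $\Rightarrow$ (ii) I would contrapose and produce $f\in B^s_{p,q}$ with $f^2\notin B^s_{p,q}$ in each excluded case. When $s<1/p$, a localized power singularity $f(x)=|x|^{-\alpha}\phi(x)$ with $\phi\in C_0^\infty$ and $\alpha$ chosen in the nonempty range $\bigl((1/p-s)/2,\,1/p-s\bigr)$ belongs to $B^s_{p,q}$ while $f^2\sim|x|^{-2\alpha}\phi^2$ carries too strong a singularity to lie in $B^s_{p,q}$. In the critical case $s=1/p$ with $q>1$, I would employ a lacunary series of normalized bumps at dyadic scales whose Besov norm is finite but whose square violates the $\ell^q$-summability of its Littlewood--Paley pieces (Definition \ref{def:fourier}). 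For the remaining excluded case $p=\infty$ and $s=0$ with $q\le 1$, a separate and more delicate Littlewood--Paley construction is required to produce a function in $B^0_{\infty,q}$ whose square falls outside the space.

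The main obstacle I anticipate is twofold. First, the intermediate-$k$ terms in the Leibniz expansion at the critical line $s=1/p$ require exactly the dyadic rearrangement that works only for $q\le 1$, so matching the exact form of condition (ii) demands a careful end-point argument rather than a soft interpolation. Second, the counterexample at $p=\infty$, $s=0$, $q\le 1$ is the trickiest: the $L^\infty$-embedding from Proposition \ref{prop:embedding} alone does not obstruct the algebra property, and one must tailor a construction that genuinely defeats the $\ell^1$-summability of the Littlewood--Paley norm of $f^2$.
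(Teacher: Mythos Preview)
The paper does not prove Proposition~\ref{prop:algebra} at all; it simply refers the reader to \cite[Lemma~2.2, Remark~2.3, Theorem~3.2, Remark~3.3]{NS-2018}. Your proposal therefore goes well beyond what the paper does, attempting a self-contained argument via the difference characterization and explicit counterexamples. That is a legitimate and more informative route, but a few points in your sketch are not quite right as stated.

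First, the difference characterization \eqref{def:Besov-m} you rely on is only available for $s>\max\{0,1/p-1\}$, so your framework does not cover the failure of (i) when $s\le 0$ (or $0<p<1$ with $s\le 1/p-1$); you would need to handle those ranges separately, e.g.\ by observing that $B^s_{p,q}$ then contains genuine distributions for which pointwise products are not defined. Second, your interpolation bound $\|\Delta_h^k f\|_{L^\infty}\lesssim |h|^{s-1/p}\|f\|_{B^s_{p,q}}$ is inaccurate whenever $s-1/p\ge k$: the difference saturates at $|h|^k$, not $|h|^{s-1/p}$. The Leibniz argument still closes because the companion factor $\|\Delta_h^{m-k}g\|_{L^p}$ then supplies $|h|^{s-k}$ via the $(m-k)$-th order seminorm, but the bookkeeping is not what you wrote; you should state the bound as $\|\Delta_h^k f\|_{L^\infty}\lesssim |h|^{\min(k,\,s-1/p)}$ and pair it accordingly. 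Third, for $p<1$ or $q<1$ the quasi-triangle inequality introduces constants that must be tracked through the Leibniz sum, and the direct difference approach becomes awkward; the references the paper cites use the Fourier-analytic paraproduct decomposition instead, which handles the full quasi-Banach range uniformly. Your counterexample strategy for (i)~$\Rightarrow$~(ii) is sound in outline, though the endpoint $p=\infty$, $s=0$, $q\le 1$ that you flag as ``trickiest'' is indeed the one place where a genuinely new construction is needed and where your sketch remains only a placeholder.
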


For the details of these propositions, we refer to \cite[Lemma 2.2, Remark 2.3, Theorem~3.2, Remark 3.3]{NS-2018} for instance.

\subsection{Some necessary conditions and sufficient conditions}\label{sub:2.2}

In this subsection, we give some preliminary lemmas on necessary conditions and sufficient conditions of boundedness of $C_\varphi$ on $B^s_{p,q}$. 
These lemmas are already known, but we give the proofs for the reader's convenience.

\begin{lem}\label{lem:nec1}
Let $0< p < \infty$, $0< q\le \infty$ and $s > \max\{0,1/p-1\}$. Then, if $C_\varphi$ is bounded on $B^s_{p,q}$, then 
$U(\varphi) < \infty$ and 
\begin{equation}\label{eq.nec1}
U(\varphi)^\frac{1}{p} \lesssim \|C_\varphi\|_{B^s_{p,q} \to B^s_{p,q}}. 
\end{equation}
\end{lem}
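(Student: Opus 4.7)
The plan is to test $C_\varphi$ against translates of a single fixed smooth bump function and extract the measure of $\varphi^{-1}(I)$ directly from the $L^p$-part of the Besov norm. Since $B^s_{p,q} \hookrightarrow L^p$ (immediate from \eqref{def:Besov-m} under $s>\max\{0,1/p-1\}$), any upper bound on $\|C_\varphi f\|_{B^s_{p,q}}$ becomes an upper bound on $\|C_\varphi f\|_{L^p}$, and the latter converts to a measure estimate as soon as $C_\varphi f$ dominates an indicator function.

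First I would fix a non-negative $\psi \in C^\infty_0$ with $\psi \ge 1$ on $[-1/2,1/2]$. Then $\psi \in B^s_{p,q}$ and, by translation invariance of \eqref{def:Besov-m}, the translates $\psi_y(x) := \psi(x-y)$ all have the same norm $C_0 := \|\psi\|_{B^s_{p,q}}<\infty$, independent of $y\in\mathbb R$. For any closed unit interval $I\subset \mathbb R$, write $I = y+[-1/2,1/2]$ with $y$ the midpoint. Then I would observe the pointwise lower bound
\[
(C_\varphi \psi_y)(x) \;=\; \psi(\varphi(x)-y) \;\ge\; \chi_{[-1/2,1/2]}(\varphi(x)-y) \;=\; \chi_{\varphi^{-1}(I)}(x),
\]
which combined with the embedding above and the boundedness of $C_\varphi$ gives
\[
|\varphi^{-1}(I)|^{1/p} \;\le\; \|C_\varphi \psi_y\|_{L^p} \;\le\; \|C_\varphi \psi_y\|_{B^s_{p,q}} \;\le\; C_0\, \|C_\varphi\|_{B^s_{p,q}\to B^s_{p,q}}.
\]
Taking the supremum over unit intervals $I$ yields \eqref{eq.nec1}, and in particular $U(\varphi)<\infty$.

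There is essentially no obstacle here; this is one of the easier preparatory lemmas. The only subtlety worth a line is that when $B^s_{p,q}$ is not embedded in continuous functions (Proposition \ref{prop:embedding}), the composition $f\circ\varphi$ must be interpreted via the non-singular convention recalled in the footnote of the introduction. Since $\psi$ is continuous, the pointwise inequality displayed above is unambiguous, so this convention poses no real issue. I would expect the same translate-of-a-bump test function to recur as a basic device for extracting quantitative information about $\varphi$ from operator bounds in the later lemmas of the paper.
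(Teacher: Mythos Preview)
Your argument is correct and essentially identical to the paper's: both test $C_\varphi$ against translates of a fixed nonnegative $C^\infty_0$ bump equal to (or at least) $1$ on a unit interval, use translation invariance of the Besov norm, and read off $|\varphi^{-1}(I)|^{1/p}$ from the $L^p$-part of $\|C_\varphi f_a\|_{B^s_{p,q}}$. The only cosmetic difference is your interval convention $I=y+[-\tfrac12,\tfrac12]$ versus the paper's $[a,a+1]$.
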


\begin{proof}
The proof is the same as in \cite[Subsection 2.3]{BS-1999}. 
Let us take a non-negative function 
$f \in C^\infty_0$ such that $f\equiv1$ on $[0,1]$, and define $f_a(x) := f(x-a)$ for $a\in \mathbb R$. By the assumption that $C_\varphi$ is bounded on $B^s_{p,q}$, there exists a constant $C>0$, independent of $a$, such that 
\[
\begin{split}
\|C_\varphi f_a\|_{L^p} & \le \|C_\varphi f_a\|_{B^s_{p,q}} \le \|C_\varphi\|_{B^s_{p,q} \to B^s_{p,q}} \|f_a\|_{B^s_{p,q}}= \|C_\varphi\|_{B^s_{p,q} \to B^s_{p,q}} \|f\|_{B^s_{p,q}}
\end{split}
\]
for any $a \in \mathbb R$.
Moreover, 
\[
\|C_\varphi f_a\|_{L^p}^p 
\ge 
\int_{\varphi^{-1}([a,a+1])} |f_a(\varphi(x))|^p\, dx = \left|\varphi^{-1}([a,a+1]) \right|
\]
for any $a \in \mathbb R$. 
Combining the above two estimates, we have 
$U(\varphi) < \infty$ and the inequality \eqref{eq.nec1}.
\end{proof}

\begin{lem}\label{lem:suf1}
Let $0<p < \infty$, $0< q \le \infty$ and $s>1/p$. Assume that $U(\varphi)<\infty$. 
Then 
\[
\|C_\varphi f \|_{B^s_{p,q} \to L^p} 
\lesssim U(\varphi)^\frac{1}{p}. 
\]
\end{lem}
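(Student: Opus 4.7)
The plan is to decompose $\mathbb R$ into disjoint unit intervals, bound each resulting piece of $\|C_\varphi f\|_{L^p}^p$ via $U(\varphi)$ and the sup of $|f|$ on the interval, and then reassemble using a localized embedding of $B^s_{p,q}$ into $L^\infty$. Since $s>1/p$, Proposition~\ref{prop:embedding} yields $B^s_{p,q}\hookrightarrow BUC$, so I may work with the continuous representative of $f$. Writing $I_n:=[n,n+1)$ for $n\in\mathbb Z$ and using $|\varphi^{-1}(I_n)|\le U(\varphi)$, I split
\[
\|C_\varphi f\|_{L^p}^p
=\sum_{n\in\mathbb Z}\int_{\varphi^{-1}(I_n)}|f(\varphi(x))|^p\,dx
\le U(\varphi)\sum_{n\in\mathbb Z}\|f\|_{L^\infty(I_n)}^p.
\]

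The problem thus reduces to the amalgam-type inequality
\[
\sum_{n\in\mathbb Z}\|f\|_{L^\infty(I_n)}^p\lesssim \|f\|_{B^s_{p,q}}^p.
\]
To prove this, I would fix a cutoff function $\theta\in C^\infty_0(\mathbb R)$ with $\theta\equiv 1$ on $[0,1]$ and $\supp\theta\subset(-1,2)$, and set $\theta_n(\cdot):=\theta(\cdot-n)$. A second application of Proposition~\ref{prop:embedding} then gives $\|f\|_{L^\infty(I_n)}\le \|\theta_n f\|_{L^\infty}\le C\|\theta_n f\|_{B^s_{p,q}}$, so it suffices to establish the localization estimate
\[
\sum_{n\in\mathbb Z}\|\theta_n f\|_{B^s_{p,q}}^p\lesssim \|f\|_{B^s_{p,q}}^p.
\]
The $L^p$ contribution is immediate from the uniformly bounded overlap of $\supp\theta_n$. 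For the seminorm contribution, I would expand $\Delta_h^m(\theta_n f)$ via the Leibniz rule for finite differences and estimate each resulting term using either the smoothness of $\theta$ or the Besov seminorm of $f$, again invoking the bounded overlap when summing in $n$.

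The main obstacle is the localization estimate, which is delicate when the outer exponent $p$ differs from the inner exponent $q$ of the Besov space: the order in which Minkowski's inequality is applied (to exchange the sum in $n$ with the inner integral over $h$ in the $L^q(dh/|h|)$-norm) must be chosen according to whether $q\le p$ or $q>p$. This type of localization is nonetheless a standard fact in the theory of Besov spaces, and once it is available it combines with the two displays above to yield $\|C_\varphi f\|_{L^p}\lesssim U(\varphi)^{1/p}\|f\|_{B^s_{p,q}}$, as claimed.
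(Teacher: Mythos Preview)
Your proposal is correct and follows essentially the same approach as the paper: decompose $\mathbb R$ into unit intervals, bound each piece of $\|C_\varphi f\|_{L^p}^p$ using $U(\varphi)$ and the sup of $|f|$ on that interval, and then appeal to the amalgam-type inequality $\sum_{n\in\mathbb Z}\|f\|_{L^\infty([n,n+1])}^p\lesssim\|f\|_{B^s_{p,q}}^p$. The only difference is that the paper simply quotes this inequality as a known fact (referring to \cite[Subsection~3.3]{BS-1999}), whereas you sketch a proof of it via cutoffs $\theta_n$ and the localization principle for Besov spaces.
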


\begin{proof}
The proof is the same as in the proof of \cite[Theorem 5]{Bou-2000}. 
It is known that 
\begin{equation}\label{eq.im-Sobolev}
\left(\sum_{j\in \mathbb Z}\|f\|_{L^{\infty}([j,j+1])}^p\right)^\frac{1}{p}
\le  C\|f\|_{B^s_{p,q}}
\end{equation}
for $0< p < \infty$, $0< q \le \infty$ and $s>1/p$. 
The proof of \eqref{eq.im-Sobolev} can be found in \cite[Subsection 3.3]{BS-1999}. 
By using \eqref{eq.im-Sobolev}, we obtain 
\[
\begin{split}
\|C_\varphi f\|_{L^p}^p
& = \sum_{j \in \mathbb Z}
\int_{\varphi^{-1}([j,j+1])} |f(\varphi(x))|^p \, dx\\
& \le U(\varphi) \sum_{j \in \mathbb Z}\|f\|_{L^\infty([j,j+1])}^p \\
&\le C U(\varphi) \|f\|_{B^s_{p,q}}^p
\end{split}
\]
for any $f \in B^s_{p,q}$. 
\end{proof}

\begin{lem}\label{lem:suf2}
Let $0< p < \infty$, $0< q\le \infty$ and $s> \max \{1,1/p\}$. Assume that $C_\varphi$ is bounded on $B^{s-1}_{p,q}$. 
Then
\[
\|C_\varphi\|_{B^s_{p,q} \to L^p}
\lesssim \|C_\varphi\|_{B^{s-1}_{p,q} \to B^{s-1}_{p,q}}.
\]
\end{lem}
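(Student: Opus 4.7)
The plan is to chain three short, standard facts together. The operator norm we need to control is from $B^s_{p,q}$ into $L^p$, while the hypothesis gives us control on $B^{s-1}_{p,q}$, so the strategy is to factor
\[
B^s_{p,q} \hookrightarrow B^{s-1}_{p,q} \xrightarrow{C_\varphi} B^{s-1}_{p,q} \hookrightarrow L^p
\]
and bound each arrow individually, picking up the factor $\|C_\varphi\|_{B^{s-1}_{p,q}\to B^{s-1}_{p,q}}$ only from the middle arrow.

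First, I would verify that the hypothesis $s > \max\{1, 1/p\}$ forces $s-1 > \max\{0, 1/p - 1\}$, so that the difference characterization \eqref{def:Besov-m} is available for $B^{s-1}_{p,q}$. This immediately yields the rightmost embedding: by definition of the norm,
\[
\|g\|_{L^p} \le \|g\|_{B^{s-1}_{p,q}} \quad \text{for any } g \in B^{s-1}_{p,q}.
\]

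Next, I would establish the leftmost embedding $B^s_{p,q} \hookrightarrow B^{s-1}_{p,q}$ by choosing a common $m \in \mathbb N$ with $m > s$ (hence $m > s-1$) and comparing the seminorms through the elementary estimate $|h|^{-(s-1)q} = |h|^{q}|h|^{-sq} \le |h|^{-sq}$ for $|h| \le 1$. Integrating this pointwise bound against $\|\Delta_h^m f\|_{L^p}^q \, dh/|h|$ gives $|f|_{B^{s-1}_{p,q}} \le |f|_{B^s_{p,q}}$, and combined with the trivial $\|f\|_{L^p} \le \|f\|_{L^p}$ yields $\|f\|_{B^{s-1}_{p,q}} \le \|f\|_{B^s_{p,q}}$.

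Finally, for any $f \in B^s_{p,q}$, I chain the three estimates:
\[
\|C_\varphi f\|_{L^p} \le \|C_\varphi f\|_{B^{s-1}_{p,q}} \le \|C_\varphi\|_{B^{s-1}_{p,q}\to B^{s-1}_{p,q}} \|f\|_{B^{s-1}_{p,q}} \le \|C_\varphi\|_{B^{s-1}_{p,q}\to B^{s-1}_{p,q}} \|f\|_{B^s_{p,q}}.
\]
Taking the supremum over $\|f\|_{B^s_{p,q}} = 1$ gives the claim, with an implied constant of $1$ (the $\lesssim$ accommodates any normalization constants hidden in the comparison between the Fourier and difference characterizations of the Besov norms). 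There is no real obstacle here: the statement is essentially a bookkeeping consequence of monotonicity of Besov scales in $s$ and the fact that the $L^p$ part sits inside every inhomogeneous Besov norm once the regularity index is in the valid range.
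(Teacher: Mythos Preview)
Your argument is correct, and it is genuinely different from the paper's. The paper proves Lemma~\ref{lem:suf2} by passing through the quantity $U(\varphi)$: first Lemma~\ref{lem:nec1} (applied at regularity $s-1$) gives $U(\varphi)^{1/p}\lesssim \|C_\varphi\|_{B^{s-1}_{p,q}\to B^{s-1}_{p,q}}$, and then Lemma~\ref{lem:suf1} (applied at regularity $s$, using $s>1/p$ and the discrete Sobolev-type bound~\eqref{eq.im-Sobolev}) gives $\|C_\varphi\|_{B^s_{p,q}\to L^p}\lesssim U(\varphi)^{1/p}$. Chaining these yields the result. Your route bypasses $U(\varphi)$ entirely and uses only the trivial embeddings $B^s_{p,q}\hookrightarrow B^{s-1}_{p,q}$ and $B^{s-1}_{p,q}\hookrightarrow L^p$, both of which hold with constant $1$ in the difference norm the paper adopts. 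This is more elementary and shorter; the paper's detour through $U(\varphi)$ is not needed for this lemma, though it reflects the central role $U(\varphi)$ plays elsewhere in the paper (e.g.\ in the statements of Theorems~\ref{thm:previous1}--\ref{thm:main1}).
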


\begin{proof}
It follows from a combination of Lemmas \ref{lem:nec1} and \ref{lem:suf1}.
\end{proof}

Next, we shall prove the following result on a necessary condition for the boundedness of $C_\varphi$ on $B^s_{p,q}$. It is one of the fundamental lemmas to prove the necessary condition in Theorem \ref{thm:main1}.

\begin{prop}\label{lem:nec3}
Let $0< p < \infty$ and $0< q\le \infty$.
Assume either ``$s>1+1/p$'' or ``$s>1/p$ and \eqref{ass:homeo}''.  
Then, if $C_\varphi$ is bounded on $B^s_{p,q}$, then $\varphi$ is Lipschitz and 
\begin{equation}\label{eq.nec3}
\|\varphi'\|_{L^\infty}^{s-\frac{1}{p}} \lesssim \|C_\varphi\|_{B^s_{p,q} \to B^s_{p,q}}.
\end{equation}
\end{prop}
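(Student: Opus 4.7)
The plan is a scaling argument combined with the H\"older--Zygmund embedding of $B^s_{p,q}$.

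First I would show that $\varphi$ is Lipschitz with $\|\varphi'\|_{L^\infty}\lesssim \|C_\varphi\|$. Under the assumption $s>1+1/p$, pick a family $F_a\in C_0^\infty(\mathbb R)$ with $F_a(y)=y-a$ on $[a-1,a+1]$ and $\operatorname{supp} F_a\subset [a-2,a+2]$, so that $\|F_a\|_{B^s_{p,q}}$ is uniformly bounded in $a$. Since $s-1>1/p$, Proposition~\ref{prop:embedding} applied to the derivative gives $B^s_{p,q}\hookrightarrow C^1_b$, hence $\|(C_\varphi F_a)'\|_{L^\infty}\lesssim \|C_\varphi\|$. Because $(C_\varphi F_a)(x)=\varphi(x)-a$ on $\varphi^{-1}([a-1,a+1])$, its derivative there agrees a.e.\ with $\varphi'$, so $|\varphi'(x)|\lesssim \|C_\varphi\|$ a.e.; choosing $a=\varphi(x)$ globalizes the bound. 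In the homeomorphism case $s>1/p$, the Lipschitz property follows from Theorem~\ref{thm:previous1} when $1/p<s<1$, and from a variant of the same test-function argument otherwise.

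Next I would sharpen to the claimed exponent $s-1/p$. Use the H\"older--Zygmund embedding $B^s_{p,q}\hookrightarrow \mathcal C^{s-1/p}$, which gives
\[
|\Delta_h^m f(x)|\lesssim \|f\|_{B^s_{p,q}}\,|h|^{s-1/p}\qquad (x\in\mathbb R,\ h>0)
\]
for any integer $m>s-1/p$. Fix $\phi\in C_0^\infty(\mathbb R)$ with $\Delta_1^m\phi(0)\ne 0$ and $\Delta_{-1}^m\phi(0)\ne 0$, and set $g_{\lambda,a}(y):=\phi(\lambda(y-a))$ for $\lambda\ge 1$. The standard Besov dilation estimate yields $\|g_{\lambda,a}\|_{B^s_{p,q}}\lesssim \lambda^{s-1/p}$, so
\[
|\Delta_h^m(C_\varphi g_{\lambda,a})(x)|\lesssim \|C_\varphi\|\,\lambda^{s-1/p}|h|^{s-1/p}.
\]

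By the first step and Rademacher's theorem, $\varphi$ is differentiable a.e. Given $\varepsilon>0$, pick $x_0$ at which $\varphi'(x_0)$ exists with $|\varphi'(x_0)|\ge \|\varphi'\|_{L^\infty}-\varepsilon$, and set $a=\varphi(x_0)$, $h=1/(\lambda|\varphi'(x_0)|)$. Then $\lambda h$ stays bounded and differentiability at $x_0$ gives
\[
\lambda\bigl(\varphi(x_0+jh)-\varphi(x_0)\bigr)=j\sign(\varphi'(x_0))+o(1)\qquad(\lambda\to\infty),
\]
so uniform continuity of $\phi$ yields $\Delta_h^m(C_\varphi g_{\lambda,a})(x_0)\to \Delta_{\sign(\varphi'(x_0))}^m\phi(0)\ne 0$. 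For $\lambda$ large enough the preceding inequality therefore forces
\[
1\lesssim \|C_\varphi\|\,\lambda^{s-1/p}h^{s-1/p}=\|C_\varphi\|\,|\varphi'(x_0)|^{-(s-1/p)},
\]
i.e.\ $|\varphi'(x_0)|^{s-1/p}\lesssim \|C_\varphi\|$. Letting $\varepsilon\to 0$ completes the proof.

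The main obstacle is the uniform control of the $o(1)$ linearization error. This requires $x_0$ to be a point of true differentiability of $\varphi$, which is available only after the first step establishes $\varphi\in W^{1,\infty}$ and we invoke Rademacher. The particular choice $\lambda h=1/|\varphi'(x_0)|$ is essential: it keeps the argument of $\phi$ in a fixed bounded range, so that the $o(h)$ Taylor remainder of $\varphi$ at $x_0$ becomes $o(1)$ after multiplication by $\lambda$ and only then transfers to $o(1)$ inside $\phi$.
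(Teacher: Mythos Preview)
Your approach for the case $s>1+1/p$ is correct and takes a genuinely different route from the paper. The paper constructs an explicit test function $\eta_\varepsilon$ with $|\eta_\varepsilon|_{B^s_{p,q}}\lesssim\varepsilon^{1/p-s}$, rescales it near a point where $\varphi'\ne 0$, and bounds $|C_\varphi f|_{B^s_{p,q}}$ from below by directly integrating second-order differences over a region in $(x,h)$-space; this yields the increment inequality $\varphi(c)-\varphi(b)\le C(c-b)$ for nearby $b<c$, after which $\varphi\in C^1$ (Lemma~\ref{lem:nec_holder}) allows passage to the limit. You instead factor through the H\"older--Zygmund embedding $B^s_{p,q}\hookrightarrow\mathcal C^{s-1/p}$ and test a single pointwise $m$-th difference under the coupling $\lambda h=1/|\varphi'(x_0)|$. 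Your route is shorter and more conceptual; the paper's is self-contained (no embedding invoked) and, crucially, produces the Lipschitz increment inequality directly rather than only at differentiability points.

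There is a gap in the homeomorphism case for $1\le s\le 1+1/p$. Your Step~1 there is just ``a variant of the same test-function argument'', but the $F_a$ argument you wrote uses $B^s_{p,q}\hookrightarrow C^1_b$, which fails once $s-1<1/p$ (and for $s-1=1/p$ unless $q\le 1$). Bypassing Step~1 via Lebesgue's theorem (a homeomorphism of $\mathbb R$ is monotone, hence differentiable a.e.) would let Step~2 bound $|\varphi'(x_0)|$ at a.e.\ $x_0$, but $\varphi'\in L^\infty$ a.e.\ does not imply Lipschitz for monotone maps (singular functions). The easy fix is to run Step~2 on increments rather than derivatives: with $\varphi$ increasing, choose $\phi\in C_0^\infty$ supported in $(-1,1)$ with $\phi(0)=1$, set $a=\varphi(x_0)$ and $\lambda=1/(\varphi(x_0+h)-\varphi(x_0))$; monotonicity forces $\lambda(\varphi(x_0+jh)-\varphi(x_0))\ge 1$ for all $j\ge 1$, so $\Delta_h^m(C_\varphi g_{\lambda,a})(x_0)=(-1)^m$, and your H\"older--Zygmund bound gives $\varphi(x_0+h)-\varphi(x_0)\lesssim \|C_\varphi\|^{1/(s-1/p)}\,|h|$ directly. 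This recovers the paper's increment inequality through the embedding.
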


To prove this, we prepare the following two lemmas:

\begin{lem}[Proposition 1 in \cite{Bou-2000}]\label{lem:varphi1}
Let $\varphi : \mathbb R \to \mathbb R$ and $A >0$. 
Assume that for any $x_0 \in \mathbb R$, there exists a neighborhood $V$ of $x_0$ such that 
\[
|\varphi(x) - \varphi(y)| \le A|x-y|
\]
for any $x , y\in V$. Then $\varphi$ is Lipschitz and $\|\varphi'\|_{L^\infty}\le A$.
\end{lem}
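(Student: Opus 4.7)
The plan is a standard local-to-global argument. Fix arbitrary points $x<y$ in $\mathbb R$ and aim to show $|\varphi(x)-\varphi(y)|\le A(y-x)$. By hypothesis each $z\in[x,y]$ has an open neighborhood $V_z$ on which $\varphi$ is $A$-Lipschitz. Since $[x,y]$ is compact, finitely many of these $V_z$ cover it, and by the Lebesgue number lemma one can choose a partition $x=t_0<t_1<\cdots<t_n=y$ so fine that each closed subinterval $[t_{i-1},t_i]$ lies entirely inside some $V_{z_i}$. Then the telescoping estimate
\[
|\varphi(x)-\varphi(y)|\le \sum_{i=1}^{n}|\varphi(t_i)-\varphi(t_{i-1})|\le \sum_{i=1}^{n} A(t_i-t_{i-1})=A(y-x)
\]
gives the global Lipschitz bound with constant $A$.

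Once global $A$-Lipschitz continuity is established, the bound $\|\varphi'\|_{L^\infty}\le A$ follows from the classical fact that a Lipschitz function on $\mathbb R$ is absolutely continuous, hence differentiable almost everywhere, and at every point of differentiability one has
\[
|\varphi'(x)|=\lim_{h\to 0}\frac{|\varphi(x+h)-\varphi(x)|}{|h|}\le A.
\]
Taking the essential supremum yields $\|\varphi'\|_{L^\infty}\le A$.

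There is no real obstacle; the only point requiring a little care is the passage from the finite subcover to a partition adapted to it, which is handled cleanly either by the Lebesgue number lemma or by directly inducting on the number of covering sets and using that consecutive neighborhoods intersect along $[x,y]$. The argument is purely topological and independent of Besov-space machinery, so it will serve as a black box in the subsequent proof of Proposition~\ref{lem:nec3}, where one produces, at each point $x_0\in\mathbb R$, a neighborhood on which $\varphi$ satisfies a Lipschitz bound with a constant controlled by $\|C_\varphi\|_{B^s_{p,q}\to B^s_{p,q}}^{1/(s-1/p)}$.
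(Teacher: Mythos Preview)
The paper does not supply its own proof of this lemma; it merely quotes the result as Proposition~1 of \cite{Bou-2000} and uses it as a black box in the proof of Proposition~\ref{lem:nec3}. Your compactness/telescoping argument is correct and is the standard proof of such a local-to-global Lipschitz statement, so there is nothing to compare against and nothing to fix.
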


\begin{lem}\label{lem:nec_holder}
Let $0< p < \infty$, $0< q\le \infty$ and $s>1/p$. Then, if $C_\varphi$ is bounded on $B^s_{p,q}$, then $\varphi$ belongs to $C^{k,\alpha}$ with 
$s - 1/p= k + \alpha$, $k \in \mathbb{N} \cup \{0\}$, $\alpha \in (0,1]$, where $C^{k,\alpha} = C^{k,\alpha}(\mathbb R)$ is the space of all $C^k$-functions such that 
their $k$-th derivatives are H\"older continuous of order $\alpha$.
\end{lem}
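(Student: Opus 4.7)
The plan is to probe $C_\varphi$ with a translation-invariant family of test functions that locally coincide with the identity $y\mapsto y$, and then to transfer the H\"older-Zygmund regularity enjoyed by elements of $B^s_{p,q}$ back to $\varphi$.

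First, fix a cutoff $\chi\in C^\infty_0(\mathbb R)$ with $\chi\equiv 1$ on $[-2,2]$ and $\supp\chi\subset[-3,3]$, and for each $a\in\mathbb R$ set
\eqn{g_a(y):=(y-a)\chi(y-a),\qquad y\in\mathbb R.}
Since $g_a$ is a translate of $g_0$, translation invariance of the Besov norm yields $\|g_a\|_{B^s_{p,q}}=\|g_0\|_{B^s_{p,q}}$, independent of $a$. The boundedness hypothesis on $C_\varphi$ therefore gives a bounded family $\{C_\varphi g_a\}_{a\in\mathbb R}\subset B^s_{p,q}$. Moreover $g_a(y)=y-a$ on $[a-2,a+2]$, so pointwise
\eqn{(C_\varphi g_a)(x)=\varphi(x)-a\qquad\text{for every }x\in\varphi^{-1}([a-2,a+2]).}

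Second, invoke the Sobolev-type embedding $B^s_{p,q}\hookrightarrow B^{s-1/p}_{\infty,\infty}=\mathcal{C}^{s-1/p}$, the H\"older-Zygmund space of order $s-1/p=k+\alpha$. For $\alpha\in(0,1)$ this coincides with the classical space $C^{k,\alpha}$, so each $C_\varphi g_a$ has a $C^{k,\alpha}$ representative with norm bounded uniformly in $a$. Combined with the pointwise identity above, this forces $\varphi-a$ to coincide with a $C^{k,\alpha}$ function on $\varphi^{-1}([a-2,a+2])$, with bounds independent of $a$.

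Third, patch locally: given any $x_0\in\mathbb R$, choose $a:=\varphi(x_0)$ so that $x_0\in\varphi^{-1}([a-2,a+2])$. Uniformity of the local $C^{k,\alpha}$ control in $a$, together with a patching argument in the spirit of Lemma~\ref{lem:varphi1}, then delivers the global conclusion $\varphi\in C^{k,\alpha}$.

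The main obstacle is the endpoint case $\alpha=1$, where $\mathcal{C}^{s-1/p}$ is the Zygmund class and does not coincide with the classical Lipschitz-type space $C^{k,1}$; treating this case requires working directly with second differences and the Besov modulus appearing in \eqref{def:Besov-m}. A minor secondary issue is that $\varphi$ is only assumed measurable, so one first passes to a continuous representative by observing that $g\circ\varphi$ has a $BUC$ representative for every $g\in B^s_{p,q}$ (Proposition~\ref{prop:embedding}). For the immediate application in Theorem~\ref{thm:main1}, where $s>1+1/p$ and hence $s-1/p>1$, only the consequence $\varphi\in C^1$ is needed, which follows at once since the Zygmund class of order exceeding $1$ is contained in $C^1$.
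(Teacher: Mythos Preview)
Your approach is exactly the paper's: test $C_\varphi$ on translates $g_a$ that locally coincide with the identity, and then read off the regularity of $\varphi$ on $\varphi^{-1}(I)$ via the Sobolev-type embedding $B^s_{p,q}\hookrightarrow C^{k,\alpha}$, patching over $a$. Your caution at the endpoint $\alpha=1$ is well placed and in fact sharper than the paper, which simply asserts the embedding $B^s_{p,q}(\varphi^{-1}(I))\subset C^{k,\alpha}(\varphi^{-1}(I))$ without distinguishing the Zygmund case; as you correctly observe, the only consequence actually used downstream (in Proposition~\ref{lem:nec3}) is $\varphi\in C^1$ for $s>1+1/p$, and this follows unproblematically from $B^{s-1/p}_{\infty,\infty}\subset C^1$ when $s-1/p>1$.
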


\begin{proof}
To begin with, by the Sobolev type embedding, we can see that $\varphi$ is continuous in $\mathbb R$ as $C_\varphi$ is bounded on $B^s_{p,q}$ with $s>1/p$. 
Let $I \subset \mathbb R$ be an open interval with $|I|<\infty$, and take $f \in C^\infty_0$ such that $f(x) = x$ for $x\in I$.
Then $C_\varphi f(x) = \varphi(x)$ for $x \in \varphi^{-1}(I)$, and hence, $\varphi \in B^s_{p,q}(\varphi^{-1}(I))$\footnote{
See e.g. \cite[Section 2]{DS1993} for the definition of Besov spaces $B^s_{p,q}(I)$ on an open set $I \subset \mathbb R$.}. 
Therefore, 
the Sobolev type embedding $B^s_{p,q}(\varphi^{-1}(I)) \subset C^{k,\alpha}(\varphi^{-1}(I))$ for $0< p < \infty$, $0< q\le \infty$ and $s>1/p$ shows that $\varphi \in C^{k,\alpha}(\varphi^{-1}(I))$. 
Since $\mathbb R = \bigcup \varphi^{-1}(I)$, where the union is taken over all open intervals $I \subset \mathbb R$ with $|I|<\infty$, we conclude that $\varphi \in C^{k,\alpha}$.
\end{proof}

\begin{proof}[Proof of Proposition \ref{lem:nec3}]
The proof of the case $1 \le p < \infty$ and $1/p < s < 1$ under the assumption \eqref{ass:homeo} can be found in \cite[Subsection 2.2]{BS-1999}. 
We also use the same idea to prove the other cases.

\smallskip

\noindent{\bf Case: $s>1+1/p$.} 
We give only the proof of the case $1\le s<2$, as the other cases are similarly proved.
By Lemma \ref{lem:nec_holder}, we note that $\varphi \in C^1$. Then, 
for any $x \in \mathbb R$ with $\varphi'(x)\not =0$, there exists an open interval $J \subset \mathbb R$ such that either $\varphi'>0$ on $J$ or $\varphi'<0$ on $J$. Hence, let $b \in \mathbb R$ with $\varphi'(b)\not =0$, and it suffices to prove that 
there exists a constant $C>0$ independent of $b$ such that $|\varphi'(b)|\le C$. 
We give proof only in the case of $\varphi'>0$ on $J$, as the proof of the other case is similar. 
For $\varepsilon >0$, let us take a non-negative function $\eta_\varepsilon \in C^\infty_0$ such that 
\[
\eta'_\varepsilon = \frac{1}{\varepsilon} ( \chi_{[-1-\varepsilon, -1]} -  \chi_{[1, 1+\varepsilon]}).
\]
Then, $\eta_\varepsilon (x) = 1$ for $x\in [-1,1]$, $\supp \eta_\varepsilon \subset [-1-\varepsilon, 1+\varepsilon]$, and 
$|\eta_\varepsilon|_{B^s_{p,q}} \le C \varepsilon^{1/p - s}$, where the constant $C$ is independent of $\varepsilon$.
Take $a,b,c \in J $ such that $a < b < c$ and $b-1 \le c \le a+1$ and 
\[
\varphi(c)-\varphi(b) \le \frac{\varphi(b) - \varphi(a)}{2} \le 1,
\]
and define 
\[
r := \frac{\varphi(b) - \varphi(a)}{2},\quad x_0 := \frac{\varphi(b) + \varphi(a)}{2},
\quad \varepsilon:= 
\frac{\varphi(c)-\varphi(b)}{2}
\]
and 
\[
f(x) := \eta_\varepsilon \left(
\frac{x-x_0}{r}
\right).
\]
Then, 
\[
\|f\|_{L^p} = C\left(\varphi(c) - \varphi(b)\right)^{\frac{1}{p}}
\] 
and 
\[
|f|_{B^s_{p,q}}
=
r^{-\frac{1}{p} + s}|\eta_\varepsilon|_{B^s_{p,q}}
\le C r^{-\frac{1}{p} + s} \varepsilon^{\frac{1}{p} - s}
=
C\left(\varphi(c) - \varphi(b)\right)^{\frac{1}{p} -s}.
\]
By the assumption of boundedness, 
\[
\begin{split}
|C_\varphi f|_{B^s_{p,q}} & \le \|C_\varphi\|_{B^s_{p,q} \to B^s_{p,q}} \|f\|_{B^s_{p,q}} \\
& \le C\|C_\varphi\|_{B^s_{p,q} \to B^s_{p,q}}\left(
\left(\varphi(c) - \varphi(b)\right)^{\frac{1}{p} -s} + \left(\varphi(c) - \varphi(b)\right)^{\frac{1}{p}}
\right).
\end{split}
\]
Noting that 
\[
f(\varphi(x+2h)) - 2 f(\varphi(x+h)) + f(\varphi(x)) = 1
\]
for any $x \in [c-h, b]$ and $h \in [c-b,c-a]$,
we estimate from below
\[
\begin{split}
|C_\varphi f|_{B^s_{p,q}}^q
& \ge 
\int_{c-b}^{c-a}
\left(
\int_{c-h}^b 1\, dx 
\right)^\frac{q}{p} \frac{dh}{h^{1+sq}}\\
& \ge 
\int_{c-b}^{c-a}
(b-c + h)^\frac{q}{p} \frac{dt}{h^{1+sq}}
\ge C (c-b)^{q (\frac{1}{p} -s )}.
\end{split}
\]
Therefore, there exists a constant $C>0$, independent of $b$ and $c$, such that 
\[
\varphi(c) - \varphi(b) \le C(c-b)
\]
for sufficiently close $b$ and $c$. 
Since $\varphi\in C^1$, we obtain $0<\varphi'(b)\le C$ as $c\to b$. 

\smallskip

\noindent{\bf Case: $s>1/p$ and \eqref{ass:homeo}.} 
By the assumption \eqref{ass:homeo}, we may assume that $\varphi$ is strictly increasing in $\mathbb R$ without loss of generality. 
By the same argument as above, for any $b$ and $c$ sufficiently close, there exists a constant $C>0$ independent of $b$ and $c$ such that $\varphi(c) - \varphi(b) \le C(c-b)$. Therefore, the proof is completed by Lemma \ref{lem:varphi1}.
\end{proof}

In the case $p=q=\infty$, we have the following:

\begin{thm}[Theorem 2 in \cite{Bou-2000}]\label{thm:low-infty}
Let $0<s<1$. Then $C_\varphi$ is bounded on $B^s_{\infty,\infty}$ if and only if 
$\varphi$ is Lipschitz.
\end{thm}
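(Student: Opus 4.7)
The plan is to handle the two implications separately: sufficiency by a direct H\"older-type estimate, and necessity by testing $C_\varphi$ against a scaled tent function. For sufficiency, suppose $\varphi$ is Lipschitz with constant $L$. Trivially $\|f\circ\varphi\|_{L^\infty}\le\|f\|_{L^\infty}$. For the seminorm, fix $0<h\le 1$ and split into two regimes. If $Lh\le 1$, then $|\varphi(x+h)-\varphi(x)|\le Lh\le 1$, and the difference characterization (with $m=1$) yields
\[
|f(\varphi(x+h))-f(\varphi(x))|\le|\varphi(x+h)-\varphi(x)|^{s}|f|_{B^s_{\infty,\infty}}\le L^{s}h^{s}|f|_{B^s_{\infty,\infty}}.
\]
If instead $Lh>1$, then $h^{-s}\le L^{s}$ and the crude bound $|f(\varphi(x+h))-f(\varphi(x))|\le 2\|f\|_{L^\infty}$ suffices. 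Combining the two regimes gives $\|C_\varphi f\|_{B^s_{\infty,\infty}}\lesssim(1+L^{s})\|f\|_{B^s_{\infty,\infty}}$.

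For necessity, first observe that $\varphi$ must be continuous: applying $C_\varphi$ to a smooth function equal to the identity on a bounded open interval $I$ produces an element of $B^s_{\infty,\infty}\hookrightarrow BUC$ that agrees with $\varphi$ on $\varphi^{-1}(I)$, and varying $I$ covers $\mathbb R$. Now fix $a<b$ with $|b-a|\le 1$ and $\lambda:=|\varphi(b)-\varphi(a)|\le 1$; by symmetry assume $\varphi(b)>\varphi(a)$ and set
\[
f(y):=\min\bigl(\max(y-\varphi(a),0),\,\lambda\bigr).
\]
Then $f$ is $1$-Lipschitz and bounded by $\lambda$, so $|f(y+h)-f(y)|\le\min(h,\lambda)$. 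Splitting the supremum over $0<h\le 1$ at the threshold $h=\lambda$ gives $|f|_{B^s_{\infty,\infty}}\lesssim\lambda^{1-s}$, hence $\|f\|_{B^s_{\infty,\infty}}\lesssim\lambda^{1-s}$ (using $\lambda\le\lambda^{1-s}$). On the other hand, $(f\circ\varphi)(b)-(f\circ\varphi)(a)=\lambda$ forces
\[
|C_\varphi f|_{B^s_{\infty,\infty}}\ge|b-a|^{-s}\lambda,
\]
so boundedness of $C_\varphi$ yields $\lambda\lesssim\|C_\varphi\|_{B^s_{\infty,\infty}\to B^s_{\infty,\infty}}^{1/s}|b-a|$.

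To globalize, continuity of $\varphi$ guarantees that every $x_0\in\mathbb R$ admits a neighborhood $V$ on which $|x-y|\le 1$ and $|\varphi(x)-\varphi(y)|\le 1$, so the preceding bound produces a uniform local Lipschitz constant independent of $x_0$; Lemma \ref{lem:varphi1} then promotes this to the global Lipschitz property. The main obstacle is the design of the tent function: its Besov norm scales as $\lambda^{1-s}$ while the lower bound from composition is $\lambda|b-a|^{-s}$, and only the exact cancellation of the two powers of $\lambda$ produces the linear rather than merely H\"older estimate of $\varphi(b)-\varphi(a)$ in terms of $b-a$. Any less carefully scaled test function would leave us with a residual $\lambda^\alpha$ and hence only a H\"older bound, which is insufficient.
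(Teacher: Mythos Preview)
The paper does not supply its own proof of this statement; it is quoted from \cite{Bou-2000}. Your argument is correct. The sufficiency is the standard H\"older-space computation, and for necessity your ramp test function of height $\lambda=|\varphi(b)-\varphi(a)|$ does the job cleanly: the key balance is that $\|f\|_{B^s_{\infty,\infty}}\lesssim\lambda^{1-s}$ while $|C_\varphi f|_{B^s_{\infty,\infty}}\ge|b-a|^{-s}\lambda$, and taking $s$-th roots forces $\lambda\lesssim|b-a|$. The localization via continuity and Lemma~\ref{lem:varphi1} is exactly how the paper handles the analogous passage for $p<\infty$.

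For comparison, the paper's closest in-house argument is the proof of Proposition~\ref{lem:nec3} (the case $p<\infty$), which uses a smooth plateau function $\eta_\varepsilon$ with $|\eta_\varepsilon|_{B^s_{p,q}}\lesssim\varepsilon^{1/p-s}$ and obtains the lower bound by integrating the second difference over a region where it equals $1$. Your piecewise-linear ramp is the natural $p=\infty$ analogue and is in fact simpler: for the H\"older space a single first difference at one pair of points already witnesses the seminorm from below, so no integral is needed. Both approaches rest on the same scaling cancellation; yours just exploits the explicit pointwise structure of $B^s_{\infty,\infty}$.
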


As a corollary of Theorem \ref{thm:low-infty}, 
we have the following sufficient condition by the real interpolation argument $(B^{s_0}_{\infty, \infty}, B^{s_1}_{\infty, \infty})_{\theta, q} = B^s_{\infty,q}$ with $s=\theta s_0 + (1-\theta)s_1$, $s_0\not = s_1$ and $\theta \in (0,1)$.

\begin{cor}\label{cor:low-infty}
Let $0<s<1$ and $0<q<\infty$. Then, if 
$\varphi$ is Lipschitz, then $C_\varphi$ is bounded on $B^s_{\infty,q}$.
\end{cor}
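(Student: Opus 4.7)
The plan is to derive the corollary from Theorem \ref{thm:low-infty} by the real interpolation identity that is cited in the statement, namely
\[
(B^{s_0}_{\infty,\infty},\, B^{s_1}_{\infty,\infty})_{\theta, q} = B^s_{\infty,q},
\qquad s=(1-\theta)s_0+\theta s_1,\ \theta\in(0,1),\ s_0\neq s_1.
\]
Given $s\in(0,1)$, I would first choose $s_0, s_1$ with $0<s_0<s<s_1<1$ and pick $\theta\in(0,1)$ so that $s=(1-\theta)s_0+\theta s_1$. By Proposition \ref{prop:embedding} (applied with $p=\infty$), both $B^{s_0}_{\infty,\infty}$ and $B^{s_1}_{\infty,\infty}$ embed continuously into $BUC$, so they form a compatible pair inside the ambient Hausdorff space $BUC$, and $C_\varphi$ is well-defined on each of them via composition of continuous functions.

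Next, since $\varphi$ is Lipschitz, Theorem \ref{thm:low-infty} yields that $C_\varphi$ is bounded on $B^{s_0}_{\infty,\infty}$ and on $B^{s_1}_{\infty,\infty}$, with operator norms bounded in terms of the Lipschitz constant of $\varphi$. The operator $C_\varphi$ is linear and its definition on the two endpoint spaces is consistent, so it extends to a bounded linear operator on the couple $(B^{s_0}_{\infty,\infty}, B^{s_1}_{\infty,\infty})$ in the sense of interpolation theory.

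Finally, I would invoke the real interpolation functor $(\cdot,\cdot)_{\theta,q}$, which maps bounded operators on a couple to bounded operators on the interpolation space, and apply the identity displayed above to conclude that $C_\varphi$ is bounded on $B^s_{\infty,q}$ for every $q\in(0,\infty)$.

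The only point that requires care, rather than a genuine obstacle, is the case $q<1$, where the interpolation space is only a quasi-Banach space; however, the real interpolation functor is well known to act boundedly on operators between quasi-Banach couples, and the identity $(B^{s_0}_{\infty,\infty}, B^{s_1}_{\infty,\infty})_{\theta,q}=B^{s}_{\infty,q}$ is valid in this range as well (see, e.g., the standard references on Besov spaces cited in the paper). Modulo citing this identity, the proof reduces to two lines.
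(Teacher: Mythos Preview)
Your proposal is correct and follows exactly the approach sketched in the paper: the corollary is stated there as an immediate consequence of Theorem~\ref{thm:low-infty} via the real interpolation identity $(B^{s_0}_{\infty,\infty}, B^{s_1}_{\infty,\infty})_{\theta,q}=B^{s}_{\infty,q}$, and your write-up just spells out the choice of endpoints $0<s_0<s<s_1<1$ and the compatibility check.
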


\subsection{Characterization of $M(B^{s}_{p,q})$}\label{sub:2.3}

Let $\psi \in C^\infty_0$ be a non-negative function such that 
\begin{equation}\label{eq.psi}
\sum_{z \in \mathbb Z} \psi (\cdot -z) \equiv 1 \text{ on }\mathbb R \quad \text{and}
\quad \supp \psi = [-1,1].
\end{equation}
We define 
\begin{equation}\label{def:B-unif}
B^s_{p,q, \mathrm{unif}}=B^s_{p,q, \mathrm{unif}}(\mathbb R)
:=
\left\{
f \in L^1_{\mathrm{loc}}\, \Big|\,
\sup_{z \in \mathbb Z} \|f \psi(\cdot -z)\|_{B^s_{p,q}} < \infty
\right\}.
\end{equation}
For $p\not =\infty$, we also define
\[
M^s_{p,q}=M^s_{p,q}(\mathbb R)
:=
\left\{
f \in L^1_{\mathrm{loc}} \, \bigg|\, 
\sup_{\|\{c_z\}_z\|_{\ell ^p(\mathbb Z)} \le 1}
\Big\|f \sum_{z\in \mathbb Z} c_z \psi(\cdot -z)\Big\|_{B^{s}_{p,q}}< \infty
\right\}.
\]
It is immediately seen that 
\[
M(B^{s}_{p,q}) \hookrightarrow B^s_{p,q, \mathrm{unif}}.
\]
for $0<p,q\le \infty$ and $s > \max\{0,1/p-1\}$. 
Moreover, we also have the following:

\begin{lem}\label{lem:relation}
Let $0<p<\infty$, $0<q\le \infty$ and $s > \max\{0,1/p-1\}$. Then
\[
M(B^{s}_{p,q}) \hookrightarrow  M^s_{p,q} \hookrightarrow B^s_{p,q, \mathrm{unif}}.
\]
\end{lem}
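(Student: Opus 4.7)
The two embeddings are of quite different natures: the right-hand one is essentially a one-line consequence of the definition of $M^s_{p,q}$, whereas the left-hand one reduces to a single analytic estimate on the $B^s_{p,q}$-norm of a sum of translated bumps. I plan to prove them in that order.

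\emph{Right embedding.} Given $f\in M^s_{p,q}$ and any $z_0\in\mathbb Z$, I would plug the canonical unit sequence $c_{z_0}=1$, $c_z=0$ for $z\neq z_0$, into the defining supremum for $\|f\|_{M^s_{p,q}}$: its $\ell^p(\mathbb Z)$-norm equals one and $\sum_{z} c_z\psi(\cdot-z)=\psi(\cdot-z_0)$, so $\|f\psi(\cdot-z_0)\|_{B^s_{p,q}}\le \|f\|_{M^s_{p,q}}$. Taking the supremum over $z_0$ yields the embedding with operator norm at most one.

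\emph{Left embedding.} Using the multiplier inequality $\|fg\|_{B^s_{p,q}}\le \|f\|_{M(B^s_{p,q})}\|g\|_{B^s_{p,q}}$ with $g=\sum_z c_z\psi(\cdot-z)$, the embedding reduces to the auxiliary bound
\[
\Bigl\|\sum_{z\in\mathbb Z} c_z\psi(\cdot-z)\Bigr\|_{B^s_{p,q}}\lesssim \|\{c_z\}_z\|_{\ell^p(\mathbb Z)},
\]
with constant independent of the sequence. I would prove this via the difference characterization \eqref{def:Besov-m}, which is available because $s>\max\{0,1/p-1\}$. Fix an integer $m>s$. Since $\supp\psi=[-1,1]$, for every $|h|\le 1$ the function $\Delta_h^m\psi(\cdot-z)$ is supported in an interval around $z$ of length at most $2+2m$; consequently, at each $x\in\mathbb R$ only $N=N(m)$ values of $z$ contribute a nonzero term to $\sum_z c_z\Delta_h^m\psi(x-z)$, uniformly in $h$. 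The elementary finite-sum inequality $|\sum_{i=1}^N a_i|^p\le C_{p,N}\sum_{i=1}^N|a_i|^p$, applied pointwise, gives
\[
\Bigl|\sum_z c_z\Delta_h^m\psi(x-z)\Bigr|^p\le C_{p,N}\sum_z |c_z|^p\,|\Delta_h^m\psi(x-z)|^p,
\]
and integration in $x$ together with translation invariance of Lebesgue measure yields $\|\Delta_h^m\sum_z c_z\psi(\cdot-z)\|_{L^p}\lesssim \|\{c_z\}\|_{\ell^p}\,\|\Delta_h^m\psi\|_{L^p}$, uniformly in $|h|\le 1$. The same argument (with $m$ replaced by $0$) controls the $L^p$-part. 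Substituting into \eqref{def:Besov-m} and pulling $\|\{c_z\}\|_{\ell^p}$ outside the $h$-integral gives the auxiliary bound with constant proportional to $\|\psi\|_{B^s_{p,q}}$, which is finite since $\psi\in C_0^\infty$.

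\emph{Main obstacle.} The whole argument hinges on the finite-overlap count $N(m)$ being uniform in both $h\in[-1,1]$ and the sequence $\{c_z\}$; the explicit support analysis of $\Delta_h^m\psi(\cdot-z)$ delivers this cleanly. A minor bookkeeping point is that $C_{p,N}=1$ for $0<p\le 1$ by subadditivity of $t\mapsto t^p$, while $C_{p,N}=N^{p-1}$ for $p>1$ by a finite-form H\"older inequality; in both regimes the resulting embedding constant depends only on $p$ and $m$ (hence on $s$), which is acceptable.
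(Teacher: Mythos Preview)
Your proof is correct. Both embeddings follow the paper's outline: the right one by plugging in a unit $\ell^p$-sequence $c_z=\delta_{z,z_0}$, and the left one by establishing $\|\sum_z c_z\psi(\cdot-z)\|_{B^s_{p,q}}\lesssim \|\{c_z\}\|_{\ell^p}$ via the finite overlap of the translated bumps.

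The implementation of the finite-overlap step differs slightly. The paper partitions $\mathbb Z$ into finitely many subsets $\Omega_1,\dots,\Omega_N$ (with $N$ depending only on $m$) chosen so that within each $\Omega_\ell$ the supports of $\psi_z$ are separated by at least $3m$; this forces the supports of $\Delta_h^m\psi_z$ to be pairwise disjoint for $|h|\le 1$, yielding exact equalities $\|\sum_{z\in\Omega_\ell}c_z\psi_z\|_{L^p}^p=\sum_{z\in\Omega_\ell}|c_z|^p\|\psi\|_{L^p}^p$ (and likewise for the differences), after which one sums over $\ell$ by the quasi-triangle inequality. You instead bound the overlap pointwise---at most $N(m)$ nonzero summands at each $x$---and apply the scalar inequality $|\sum_{i\le N}a_i|^p\le C_{p,N}\sum|a_i|^p$ before integrating. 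Your route is marginally more direct and spares the explicit partition; the paper's route trades that for exact equalities on each sub-family. Both arguments rest on the same analytic fact and give constants depending only on $p$ and $m$.
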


\begin{proof}
To show the second embedding, we have only to 
take $c_z = \delta_{z,\tilde z}$ for $\tilde z\in \mathbb Z$ (see also the statements before Theorem 1.5 in \cite{NS-2018}).
To see the first embedding, we show that $\sum_{z\in \mathbb Z} c_z \psi(\cdot -z)$ belongs to $B^s_{p,q}$. For this, we write $\psi_z:= \psi(\cdot-z)$ and divide $\mathbb Z$ into $\{\Omega_\ell\}_{\ell=1}^N$ such that 
\[
\dist(\supp \psi_{z_i}, \supp \psi_{z_j}) 
\ge 3m
\]
for any $z_i,z_j \in \Omega_l$ with $z_i \not = z_j$ and for $\ell=1,\ldots, N$. Here, we note that the finite number $N$ depends only on $m$.
Then 
\[
\Big\|
\sum_{z \in \Omega_\ell} c_z \psi_z 
\Big\|_{L^p}^p
=
\sum_{z \in \Omega_\ell} |c_z|^p
\left\| \psi_z 
\right\|_{L^p}^p = \|\{c_z\}_z\|_{\ell^p(\Omega_\ell)}^p \|\psi\|_{L^p}^p,
\]
\[
\Big\|
\Delta_h^m\sum_{z \in \Omega_\ell} c_z \psi_z 
\Big\|_{L^p}^p
=
\sum_{z \in \Omega_\ell} |c_z|^p
\left\| \Delta_h^m\psi_z
\right\|_{L^p}^p
=
\|\{c_z\}_z\|_{\ell^p(\Omega_\ell)}^p \left\| \Delta_h^m\psi
\right\|_{L^p}^p
\]
for $|h|\le 1$ and $\ell=1,\ldots, N$.
Hence, 
\[
\begin{split}
 \Big\|\sum_{z\in \Omega_\ell} c_z \psi_z\Big\|_{ B^s_{p,q}}
&= \Big\|\sum_{z\in \Omega_\ell} c_z \psi_z\Big\|_{L^p}
+
\left(
\int_{|h|\le 1}
|h|^{-sq}
\left(
\Big\|\Delta_h^m \sum_{z\in \Omega_\ell} c_z \psi_z\Big\|_{L^p}^p \right)^\frac{q}{p}
\frac{dh}{|h|}
\right)^\frac{1}{q}\\
& =
 \|\{c_z\}_z\|_{\ell^p(\Omega_\ell)} \|\psi\|_{L^p}\\
& \qquad \qquad +
\left(
\int_{|h|\le 1}
|h|^{-sq}
\left(
\|\{c_z\}_z\|_{\ell^p(\Omega_\ell)}^p \left\| \Delta_h^m\psi
\right\|_{L^p}^p \right)^\frac{q}{p}
\frac{dh}{|h|}
\right)^\frac{1}{q}\\
& = 
 \|\{c_z\}_z\|_{\ell^p(\Omega_\ell)}\|\psi\|_{B^s_{p,q}}
\end{split}
\]
for $\ell=1,\ldots, N$, which implies 
\[
 \Big\|\sum_{z\in \mathbb Z} c_z \psi_z\Big\|_{ B^s_{p,q}}
 \le  \|\{c_z\}_z\|_{\ell^p(\mathbb Z)}\|\psi\|_{B^s_{p,q}}.
\]
The proof of Lemma \ref{lem:relation} is finished.
\end{proof}

Combining Proposition \ref{prop:algebra} and Lemma \ref{lem:relation}, we see that 
\begin{equation}\label{eq.inc1}
B^{s}_{p,q} \hookrightarrow M(B^{s}_{p,q}) \hookrightarrow  M^s_{p,q} \hookrightarrow B^s_{p,q, \mathrm{unif}}
\end{equation}
for ``$0<p<\infty$, $0<q\le \infty$ and $s>1/p$" or `` $0<p<\infty$, $0<q\le 1$ and $s=1/p$", and that 
\begin{equation*}\label{eq.inc2}
B^{s}_{\infty,q} \hookrightarrow M(B^{s}_{\infty,q}) \hookrightarrow B^s_{\infty,q, \mathrm{unif}}
\end{equation*}
for $0<q\le \infty$ and $s>0$. 
The results of \cite{NS-2018} are summarized as follows.

\begin{thm}\label{thm:pm-NS}
Let $0< p,q \le \infty$ and $s>1/p$. Then the following assertions hold:
\begin{enumerate}[\rm (i)]
\item $M(B^{s}_{p,q}) = B^s_{p,q, \mathrm{unif}}$ if and only if
$p \le q$.

\item Let $p\not =\infty$. 
Then $M(B^{s}_{p,q}) = M^s_{p,q}$.

\item 
$M(B^{s}_{p,q}) = B^s_{p,q}$ if and only if $p=\infty$.
\end{enumerate}
\end{thm}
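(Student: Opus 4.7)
The overall plan hinges on the partition $\{\psi_z\}_{z\in\mathbb{Z}}$ with $\psi_z:=\psi(\cdot-z)$, together with the algebra property of $B^s_{p,q}$ for $s>1/p$ (Proposition~\ref{prop:algebra}). I fix a companion bump $\tilde\psi\in C^\infty_0$ with $\tilde\psi\equiv 1$ on $\supp\psi$ and write $\tilde\psi_z:=\tilde\psi(\cdot-z)$. The inclusions already recorded before the theorem will handle one direction in each item, so the work is in the opposite inclusions and in the "only if" clauses.

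Part (iii) is the warm-up. The constant $1$ satisfies $\|1\|_{L^\infty}=1$ and $\Delta^m_h 1\equiv0$, hence $1\in B^s_{\infty,q}$; testing the multiplier norm against $g=1$ yields $M(B^s_{\infty,q})\hookrightarrow B^s_{\infty,q}$, while the reverse is the algebra property at $p=\infty$ (valid for $s>0$). For the "only if" direction, when $p<\infty$ the constant $1$ is still the identity multiplier but fails to lie in $L^p\supset B^s_{p,q}$, so $B^s_{p,q}\subsetneq M(B^s_{p,q})$.

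The crux of (i) is a pair of companion inequalities,
\[
\Bigl\|\sum_z h_z\Bigr\|_{B^s_{p,q}}^{p}\lesssim\sum_z\|h_z\|_{B^s_{p,q}}^{p}
\quad\text{and}\quad
\sum_z\|g\psi_z\|_{B^s_{p,q}}^{p}\lesssim\|g\|_{B^s_{p,q}}^{p},
\]
valid when $p\le q$. For the aggregation inequality, applied to families $\{h_z\}$ with finite-overlap supports, the $L^p$ part uses disjoint-support cancellation while the semi-norm part follows from Minkowski's inequality exchanging $L^q_{dh/|h|}$ and $\ell^p_z$; this swap requires exactly $p\le q$ because $\|F\|_{L^q(\ell^p)}\le\|F\|_{\ell^p(L^q)}$ only in that regime. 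The disaggregation inequality is obtained by Leibniz,
\[
\Delta^m_h(g\psi_z)=\sum_{k=0}^m\binom{m}{k}(\Delta^k_h g)(\Delta^{m-k}_h\psi_z)(\cdot+kh),
\]
combined with the uniform pointwise bound $\sum_z|(\Delta^{m-k}_h\psi_z)(y)|^p\lesssim\min(1,|h|^{p(m-k)})$ and the same Minkowski exchange. Given these, for $f\in B^s_{p,q,\mathrm{unif}}$ and $g\in B^s_{p,q}$ I write $fg=\sum_z(f\tilde\psi_z)(g\psi_z)$, aggregate, apply the algebra estimate $\|uv\|_{B^s_{p,q}}\lesssim\|u\|_{B^s_{p,q}}\|v\|_{B^s_{p,q}}$ term-wise, and disaggregate, to conclude $\|fg\|_{B^s_{p,q}}\lesssim\|f\|_{B^s_{p,q,\mathrm{unif}}}\|g\|_{B^s_{p,q}}$. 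For the "only if" direction, when $p>q$ one constructs a uniformly localized $f$ whose block profile lives in $\ell^\infty$ but forces, upon multiplication, summation along an $\ell^q\setminus\ell^p$ template in $B^s_{p,q}$, producing $f\in B^s_{p,q,\mathrm{unif}}\setminus M(B^s_{p,q})$.

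Part (ii) reduces to the inclusion $M^s_{p,q}\hookrightarrow M(B^s_{p,q})$, the reverse being Lemma~\ref{lem:relation}. My strategy would be to decompose any $g\in B^s_{p,q}$ into a "rough" piece of the form $\sum_z c_z\psi_z$ with $\|\{c_z\}\|_{\ell^p}\lesssim\|g\|_{B^s_{p,q}}$, on which the defining property of $M^s_{p,q}$ applies directly, plus a smoother remainder that can be absorbed via the algebra property after further localization. The main obstacle, and I expect the technical heart of \cite{NS-2018}, is constructing such a decomposition: because the test family $\{\psi_z\}$ lives at a single scale, the standard Frazier--Jawerth wavelet decomposition does not apply, and one must produce a bespoke low-frequency/high-frequency split calibrated to that scale, simultaneously controlling the $\ell^p$-norm of the coefficients and keeping the remainder in a suitable smoother auxiliary space.
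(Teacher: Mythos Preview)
The paper does not actually prove this theorem: its ``proof'' consists entirely of citations to \cite{NS-2018} (Theorems~1.2, 1.5, 1.7 and Corollary~3.18 there), together with the elementary observation that the constant function $1$ lies in $M(B^s_{p,q})$ but in $B^s_{p,q}$ only when $p=\infty$, which dispatches the necessity in (iii). So there is nothing substantive to compare your argument against; you are attempting far more than the paper does.

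That said, your proposal has real gaps relative to a complete proof. Your treatment of (iii) is correct and matches the paper's one genuine argument. For (i), your sufficiency sketch via aggregation/disaggregation and the Minkowski exchange $\ell^p\leftrightarrow L^q$ is the right shape, but the ``only if'' direction is not proved: you merely assert that ``one constructs'' a counterexample when $p>q$ without exhibiting it or indicating why the obvious candidates work. This is precisely the content of \cite[Corollary~3.18]{NS-2018}, and it is not automatic. For (ii) you explicitly concede the gap: the inclusion $M^s_{p,q}\hookrightarrow M(B^s_{p,q})$ when $q<p$ is the hard part, and your remarks about a ``bespoke low-frequency/high-frequency split'' are speculation, not an argument. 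Since the paper itself simply outsources all of this to \cite{NS-2018}, the honest statement of affairs is that your proposal reproduces the trivial parts, gives a credible outline for the sufficiency in (i), and leaves the two nontrivial directions (necessity in (i), and (ii) for $q<p$) essentially untouched --- exactly as the paper does, except that the paper is upfront that these are quoted results.
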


\begin{proof}
For the assertion (i), the sufficiency part was proved by \cite[Theorem 1.2]{NS-2018} and the necessary part was proved by \cite[Corollary 3.18]{NS-2018} (where $p,q \ge1$ is imposed, but the proof shows this assumption can be removed when $s>1/p$). The assertion (ii) was proved by \cite[Theorem 1.5]{NS-2018} when $q<p$ and by a combination of \cite[Theorem 1.2]{NS-2018} with \eqref{eq.inc1} when $p\le q$. For assertion (iii), the sufficiency part was proved by \cite[Theorem 1.7]{NS-2018}. The necessity immediately follows from the fact that 
$1 \in B^{s}_{p,q}$ if and only if $p=\infty$ (see e.g. \cite[Example 2.7, page 241]{saw_2018}) and 
it is clear that $1 \in M(B^{s}_{p,q})$. 
Hence, $p=\infty$ if $M(B^{s}_{p,q}) = B^s_{p,q}$. 
\end{proof}

\begin{rem}
From Proposition \ref{prop:algebra}, Lemma \ref{lem:relation} and Theorem \ref{thm:pm-NS}, we also see the following relations between $B^s_{p,q}$, $B^s_{p,q, \mathrm{unif}}$ and $M^s_{p,q}$ for $s>1/p$:
\begin{enumerate}[\rm (a)]
    \item $B^s_{p,q, \mathrm{unif}} = B^s_{p,q}$ if and only if $p=q=\infty$ (see \cite[Remark 1.8 (iii)]{NS-2018}). 
    \item Let $p\not =\infty$. Then $M^s_{p,q} = B^s_{p,q, \mathrm{unif}}$ if and only if $0< p\le q \le \infty$.
    \item Let $p\not =\infty$. Then $B^s_{p,q} \not =  M^s_{p,q}$. 
\end{enumerate}
\end{rem}

In addition, we have the following result for $s=1/p$. 

\begin{thm}[Theorem 1.9 in \cite{NS-2018}]\label{thm:pm-NS2}
Let $0<p\le 1$ and $s=1/p$. Then the following assertions hold:
\begin{enumerate}[\rm (i)]
\item If $0<p=q\le 1$, then $M(B^{1/p}_{p,p}) = B^{1/p}_{p,p,\mathrm{unif}}$. 

\item If $0<q<p\le 1$, then $M(B^{1/p}_{p,q}) = M^{1/p}_{p,q}$.

\end{enumerate}
\end{thm}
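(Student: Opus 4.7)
By Lemma \ref{lem:relation} the chain $M(B^{1/p}_{p,q}) \hookrightarrow M^{1/p}_{p,q} \hookrightarrow B^{1/p}_{p,q,\mathrm{unif}}$ holds already, so in both parts the task is to establish the reverse embedding at the relevant step. Throughout, set $\psi_z := \psi(\cdot-z)$ and fix a smooth compactly supported cutoff $\tilde\psi_z$ identically one on $\mathrm{supp}\,\psi_z$ with uniformly bounded overlap. In both cases the exponents satisfy $q \le p \le 1$, so Proposition \ref{prop:algebra} ensures that $B^{1/p}_{p,q}$ is a multiplication algebra, a fact that will be applied locally.

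\textbf{Part (i).} The plan is to exploit the Fubini-type identity
\[
\|g\|_{B^{1/p}_{p,p}}^p \sim \sum_{z\in\mathbb{Z}} \|g\tilde\psi_z\|_{B^{1/p}_{p,p}}^p,
\]
valid precisely because $p=q$: the difference characterization \eqref{def:Besov-m} is an $L^p$-norm of differences, and localizing with a finite-overlap partition interchanges with the $\ell^p$ aggregation only when the two exponents coincide. Given $f\in B^{1/p}_{p,p,\mathrm{unif}}$ and $g\in B^{1/p}_{p,p}$, write $g=\sum_z g\psi_z$; since $f g\psi_z = (f\tilde\psi_z)(g\psi_z)$, the algebra property gives $\|(f\tilde\psi_z)(g\psi_z)\|_{B^{1/p}_{p,p}} \lesssim \|f\tilde\psi_z\|_{B^{1/p}_{p,p}}\|g\psi_z\|_{B^{1/p}_{p,p}}$ for each $z$. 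Summing in $\ell^p$, pulling out $\sup_z \|f\tilde\psi_z\|_{B^{1/p}_{p,p}} \lesssim \|f\|_{B^{1/p}_{p,p,\mathrm{unif}}}$, and reassembling via the localization identity on the $g$-side yields $\|fg\|_{B^{1/p}_{p,p}} \lesssim \|f\|_{B^{1/p}_{p,p,\mathrm{unif}}} \|g\|_{B^{1/p}_{p,p}}$, which is the missing embedding.

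\textbf{Part (ii).} When $q<p$ the plain $\ell^p$-localization fails and a more delicate decomposition is required. The plan is, for arbitrary $g\in B^{1/p}_{p,q}$, to construct a one-scale decomposition
\[
g = \sum_{z\in\mathbb{Z}} c_z \psi_z + r,
\]
in which the sampling amplitudes $c_z$ (obtained for instance via $c_z = (\psi\ast g)(z)$ or a similar Peetre-type functional) satisfy $\|(c_z)\|_{\ell^p(\mathbb Z)} \lesssim \|g\|_{B^{1/p}_{p,q}}$, using a borderline analogue of the Sobolev-type inequality \eqref{eq.im-Sobolev} together with $\ell^q\hookrightarrow\ell^p$. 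The remainder $r$ is arranged to lie in $B^{1/p}_{p,q}$ with norm dominated by $\|g\|_{B^{1/p}_{p,q}}$ and to be uniformly more regular (in particular, to lie in the algebra $B^{1/p}_{p,q}$ in a quantitatively controlled way). Then $fg$ splits into a principal part $f\sum_z c_z\psi_z$, controlled directly by the definition of $\|f\|_{M^{1/p}_{p,q}}$, and an error $fr$, controlled by the multiplication algebra embedding \eqref{eq.inc1}. Summing the two bounds produces $\|fg\|_{B^{1/p}_{p,q}} \lesssim \|f\|_{M^{1/p}_{p,q}} \|g\|_{B^{1/p}_{p,q}}$, giving $M^{1/p}_{p,q} \hookrightarrow M(B^{1/p}_{p,q})$.

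\textbf{Main obstacle.} The hardest step is the one-scale atomic decomposition underlying Part (ii). At the critical smoothness $s=1/p$ there is no slack in either parameter, so extracting an $\ell^p$-summable sampling sequence from a $B^{1/p}_{p,q}$ function, and simultaneously producing a remainder that behaves like an element of the algebra without losing the summability index, requires a careful borderline Sobolev-type embedding into a sequence space. Carrying \eqref{eq.im-Sobolev} across the critical line $s = 1/p$ (where the embedding into $L^\infty$ is only available because $q\le 1$) and quantifying the gain exploited by $\ell^q \hookrightarrow \ell^p$ for $q < p$ is the technical heart of the argument.
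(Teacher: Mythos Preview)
The paper does not prove this theorem; it is quoted verbatim from \cite{NS-2018} and used as a black box (only in Remark~\ref{rem:critical}). So there is no ``paper's own proof'' to compare against, and your proposal is effectively an attempt to reconstruct the argument of Nguyen--Sickel rather than anything in the present paper.

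On the substance: your sketch for Part~(i) is reasonable. The localization identity $\|g\|_{B^{1/p}_{p,p}}^p \sim \sum_z \|g\psi_z\|_{B^{1/p}_{p,p}}^p$ does hold when $p=q$ (bounded overlap plus $p$-subadditivity on both the $L^p$ and seminorm pieces), and combined with the local algebra estimate it gives the multiplier bound as you describe. Part~(ii), however, has a genuine gap. You propose to split $g = \sum_z c_z\psi_z + r$ and then control $fr$ ``by the multiplication algebra embedding \eqref{eq.inc1}''. But \eqref{eq.inc1} says $B^{1/p}_{p,q} \hookrightarrow M(B^{1/p}_{p,q})$, which lets you bound $\|fr\|$ only if you already know $f \in B^{1/p}_{p,q}$ --- and you do not: the hypothesis is merely $f \in M^{1/p}_{p,q}$, which tests $f$ only against the very special functions $\sum_z c_z\psi_z$. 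Saying instead that $r$ lies in the algebra does not help either, since then you would need $\|fr\| \lesssim \|r\|_{M(B^{1/p}_{p,q})}\|f\|_{B^{1/p}_{p,q}}$, which again requires $f$ in the global space. The actual argument in \cite{NS-2018} uses a full wavelet (or atomic) decomposition of $g$ across all scales, not a single coarse-scale sampling plus remainder; the point is that every building block, not just the scale-zero piece, must be of the form against which $M^{1/p}_{p,q}$ is defined. Your one-scale decomposition cannot close because the remainder $r$ is a generic Besov function and you are back to the original problem.
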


We also have the following embedding.

\begin{prop}[Theorem 2.21 in \cite{Tri_2006}]\label{prop:sobolev}
Let $0< p,q\le \infty$ and $s>\max\{0,1/p-1\}$.
Then 
\[
B^s_{p,q, \mathrm{unif}} \hookrightarrow L^\infty
\]
holds if and only if either ``$s>1/p$" or ``$s=1/p$ and $q\le 1$".
\end{prop}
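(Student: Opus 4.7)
The plan is to reduce the claim to Proposition \ref{prop:embedding} via the elementary embedding $B^s_{p,q}\hookrightarrow B^s_{p,q,\mathrm{unif}}$, valid whenever $s>\max\{0,1/p-1\}$; once this is in hand, both directions of the equivalence become transparent. For this preliminary embedding I use two facts: the $B^s_{p,q}$ quasi-norm is translation invariant, and $\psi\in C^\infty_0$ is a bounded pointwise multiplier on $B^s_{p,q}$. The latter is standard in the present range of $s$ and follows from the Leibniz-type identity for differences,
\[
\Delta_h^m(\psi g)(x) \;=\; \sum_{k=0}^m \binom{m}{k}\Delta_h^k\psi(x)\,\Delta_h^{m-k}g(x+kh),
\]
combined with the smoothness and compact support of $\psi$ (and the $p$- and $q$-triangle inequalities in the quasi-Banach range). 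Translating both inputs by $z$ then gives
\[
\sup_{z\in\mathbb Z}\|f\psi(\cdot-z)\|_{B^s_{p,q}} \;=\; \sup_{z\in\mathbb Z}\|\psi\cdot f(\cdot+z)\|_{B^s_{p,q}} \;\lesssim\; \|f\|_{B^s_{p,q}},
\]
i.e.\ $\|f\|_{B^s_{p,q,\mathrm{unif}}}\lesssim\|f\|_{B^s_{p,q}}$.

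For the sufficiency, assume ``$s>1/p$'' or ``$s=1/p$ and $q\le 1$''. Proposition \ref{prop:embedding} then gives $B^s_{p,q}\hookrightarrow L^\infty$. For $f\in B^s_{p,q,\mathrm{unif}}$ and any $x\in\mathbb R$, the partition of unity $\sum_{z\in\mathbb Z}\psi(x-z)=1$, together with $\supp\psi=[-1,1]$, forces $\psi(x-z(x))\ge 1/2$ for at least one $z(x)\in\mathbb Z$ (at most two terms of the sum are nonzero). Hence
\[
|f(x)| \;\le\; 2\|f\psi(\cdot-z(x))\|_{L^\infty} \;\lesssim\; \|f\psi(\cdot-z(x))\|_{B^s_{p,q}} \;\le\; \|f\|_{B^s_{p,q,\mathrm{unif}}},
\]
yielding $B^s_{p,q,\mathrm{unif}}\hookrightarrow L^\infty$. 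For the necessity, if $B^s_{p,q,\mathrm{unif}}\hookrightarrow L^\infty$, chaining with the preliminary embedding gives $B^s_{p,q}\hookrightarrow B^s_{p,q,\mathrm{unif}}\hookrightarrow L^\infty$, and Proposition \ref{prop:embedding} then forces the claimed condition on $s$ and $q$.

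The only mildly delicate step is the multiplier bound $\|\psi g\|_{B^s_{p,q}}\lesssim\|g\|_{B^s_{p,q}}$ in the quasi-Banach regime ($p<1$ or $q<1$); this is routine from the Leibniz identity above and is the sole place where the assumption $s>\max\{0,1/p-1\}$ is genuinely used, namely to ensure that the difference characterization \eqref{def:Besov-m} is available.
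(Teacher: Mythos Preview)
Your argument is correct. The paper does not supply its own proof of this proposition; it simply cites \cite[Theorem~2.21]{Tri_2006}, so there is nothing to compare against at the level of technique. Your reduction to Proposition~\ref{prop:embedding} via the elementary chain
\[
B^s_{p,q}\hookrightarrow B^s_{p,q,\mathrm{unif}}\hookrightarrow L^\infty
\]
is clean and gives both directions immediately: sufficiency by localizing with the partition of unity and applying the embedding $B^s_{p,q}\hookrightarrow L^\infty$ to each $f\psi(\cdot-z)$, necessity by composing the two embeddings and invoking the converse part of Proposition~\ref{prop:embedding}. The pointwise bound $|f(x)|\le 2\|f\psi(\cdot-z(x))\|_{L^\infty}$ is justified because the countable union of exceptional null sets (one for each $z\in\mathbb Z$) is still null.

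One small remark: your Leibniz-rule sketch for the multiplier bound $\|\psi g\|_{B^s_{p,q}}\lesssim\|g\|_{B^s_{p,q}}$ is the right idea, but as written it does not quite close for the intermediate terms $1\le k<m$ when $k\le s$, since bounding $\|\Delta_h^{m-k}g\|_{L^p}\lesssim\|g\|_{L^p}$ leaves $\int_{|h|\le1}|h|^{(k-s)q-1}\,dh$ divergent. The standard fix is either to split the regularity between the two factors (using $\|\Delta_h^{m-k}g\|_{L^p}$ together with the $B^{s-k}_{p,q}$ seminorm, which is dominated by $\|g\|_{B^s_{p,q}}$ in the inhomogeneous scale) or to invoke directly that $C^\infty_0\subset M(B^s_{p,q})$ for all admissible $s,p,q$, which is classical and does not require the difference characterization. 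Either way, the preliminary embedding is uncontroversial and your overall strategy stands.
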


\section{Proofs of main results}\label{sec:3}

\subsection{Proof of Theorem \ref{thm:main1}}\label{sub:3.1}
We begin by showing the following:

\begin{lem}\label{lem:ii-i}
Let $0< p,q \le \infty$ and $s> \max \{1,1/p\}$. 
Assume that $C_\varphi$ is bounded on $B^{s-1}_{p,q}$ and $\varphi' \in M(B^{s-1}_{p,q})$. 
Then 
$C_\varphi$ is bounded on $B^{s}_{p,q}$.
\end{lem}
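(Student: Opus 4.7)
The idea is to reduce the estimate at regularity $s$ to the assumed estimate at regularity $s-1$ via the chain rule $(C_\varphi f)' = \varphi' \cdot C_\varphi(f')$. The crucial analytic input is the standard first-order lifting equivalence $\|g\|_{B^s_{p,q}} \sim \|g\|_{L^p} + \|g'\|_{B^{s-1}_{p,q}}$, which in our parameter range is available because $s > 1/p$ (so $B^s_{p,q} \hookrightarrow BUC$ by Proposition \ref{prop:embedding}) and $s-1 > \max\{0, 1/p-1\}$ (so that the difference characterization of $B^{s-1}_{p,q}$ applies). Moreover, $\varphi' \in M(B^{s-1}_{p,q}) \subset L^1_{\mathrm{loc}}$ forces $\varphi$ to be locally absolutely continuous, which legitimizes the chain rule for sufficiently smooth $f$.

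With the lifting in hand the two summands are essentially immediate. For the $L^p$ piece, Lemma \ref{lem:suf2} together with the hypothesis on $C_\varphi$ at regularity $s-1$ gives
\[
\|C_\varphi f\|_{L^p} \lesssim \|C_\varphi\|_{B^{s-1}_{p,q}\to B^{s-1}_{p,q}}\, \|f\|_{B^s_{p,q}}.
\]
For the derivative piece, the chain rule, the defining property of $M(B^{s-1}_{p,q})$, and the hypothesis on $C_\varphi$ combine to yield
\[
\|(C_\varphi f)'\|_{B^{s-1}_{p,q}} = \|\varphi'\, C_\varphi(f')\|_{B^{s-1}_{p,q}} \le \|\varphi'\|_{M(B^{s-1}_{p,q})}\, \|C_\varphi\|_{B^{s-1}_{p,q}\to B^{s-1}_{p,q}}\, \|f'\|_{B^{s-1}_{p,q}},
\]
and one more application of lifting gives $\|f'\|_{B^{s-1}_{p,q}} \lesssim \|f\|_{B^s_{p,q}}$. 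Adding the two bounds produces $\|C_\varphi f\|_{B^s_{p,q}} \lesssim \|f\|_{B^s_{p,q}}$ on a dense class, which is the required boundedness.

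The step that will need the most care is justifying the chain rule and then extending the estimate to all of $B^s_{p,q}$. For smooth $f$ and locally absolutely continuous $\varphi$ the chain rule is classical, so the plan is to first establish the estimate for $f \in \mathcal S$ (or $C^\infty_0$) and then pass to the limit. Density of $\mathcal S$ in $B^s_{p,q}$ holds immediately when $p, q < \infty$; for the endpoint cases $p = \infty$ or $q = \infty$ one instead regularizes $f$ by convolution (using $B^s_{p,q} \hookrightarrow BUC$) and passes to the limit inside the difference quasi-norm of $B^s_{p,q}$ via Fatou, preserving the uniform bound on the regularized sequence. Apart from these technical density considerations, the proof is a short algebraic chain whose ingredients are all drawn directly from the hypotheses and the preceding lemmas.
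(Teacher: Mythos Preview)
Your proposal is correct and follows essentially the same route as the paper: both proofs rest on the lifting equivalence $\|g\|_{B^s_{p,q}} \sim \|g\|_{L^p} + \|g'\|_{B^{s-1}_{p,q}}$, the chain rule $(C_\varphi f)' = \varphi'\cdot C_\varphi f'$, Lemma~\ref{lem:suf2} for the $L^p$ part, and the multiplier hypothesis together with the assumed boundedness on $B^{s-1}_{p,q}$ for the derivative part. The paper compresses this into a single chain of inequalities without discussing the density/approximation issues you spell out; your extra care there is reasonable, though note that Lemma~\ref{lem:suf2} as stated covers only $p<\infty$, so for $p=\infty$ you should observe directly that $\|C_\varphi f\|_{L^\infty}\le \|f\|_{L^\infty}\lesssim \|f\|_{B^s_{\infty,q}}$.
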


\begin{proof}
It follows from the chain rule and Lemma \ref{lem:suf2} that 
\begin{equation}\label{eq.chain}
\begin{split}
\| C_\varphi f\|_{B^s_{p,q}}
& \le 
\|C_\varphi f\|_{L^p}
+
\| \varphi' \cdot C_\varphi f'\|_{B^{s-1}_{p,q}}\\
& \le 
C \left(\|C_\varphi\|_{B^s_{p,q} \to L^p}
+ \|\varphi'\|_{M(B^{s-1}_{p,q})} 
\|C_\varphi\|_{B^{s-1}_{p,q} \to B^{s-1}_{p,q}}
\right)  \|f\|_{B^s_{p,q}}\\
& \le 
C \left(1
+ \|\varphi'\|_{M(B^{s-1}_{p,q})} 
\right)  \|C_\varphi\|_{B^{s-1}_{p,q} \to B^{s-1}_{p,q}} \|f\|_{B^s_{p,q}}
\end{split}
\end{equation}
for any $f \in B^s_{p,q}$.
\end{proof}

Next, we prove the following: 

\begin{lem}\label{lem:i-iii}
Let $0< p < \infty$, $0<q \le \infty$ and $s> 1+1/p$. Assume \eqref{ass:finite}. Then,
if $C_\varphi$ is bounded on $B^{s}_{p,q}$, 
then $U(\varphi)<\infty$ and $\varphi' \in M(B^{s-1}_{p,q})$. 
\end{lem}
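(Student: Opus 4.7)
The plan is to prove the two conclusions separately. The first, $U(\varphi) < \infty$, is immediate from Lemma \ref{lem:nec1} (whose proof does not use the homeomorphism assumption). The main work goes into $\varphi' \in M(B^{s-1}_{p,q})$. Since $p < \infty$ and $s-1 > 1/p$, Theorem \ref{thm:pm-NS}(ii) applied to $B^{s-1}_{p,q}$ gives $M(B^{s-1}_{p,q}) = M^{s-1}_{p,q}$, so it will suffice to show
\[
\Big\|\varphi'\sum_{z\in\mathbb Z} c_z\psi(\cdot-z)\Big\|_{B^{s-1}_{p,q}} \lesssim \|\{c_z\}\|_{\ell^p(\mathbb Z)}
\]
for all finitely supported $\{c_z\}$ (and then extend by density, as $p<\infty$).

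The strategy is to manufacture a test function $F_c \in B^s_{p,q}$ of the form $F_c = \sum_z c_z H_z$, whose derivative produces $\varphi'\sum_z c_z\psi(\cdot-z)$ up to a manageable error term. By Proposition \ref{lem:nec3} and Lemma \ref{lem:nec_holder}, $\varphi \in C^1$ with $L:=\|\varphi'\|_{L^\infty}<\infty$. Fix $\eta \in C^\infty_0(\mathbb R)$ with $\eta\equiv 1$ on $[-L,L]$ and $\supp\eta \subset [-L-1,L+1]$, and set
\[
f_z(y) := (y-\varphi(z))\,\eta(y-\varphi(z)), \qquad H_z := (C_\varphi f_z)\,\psi(\cdot-z).
\]
Since the $f_z$ are translates of a single bump, $\|f_z\|_{B^s_{p,q}}$ is uniformly bounded in $z$, and the boundedness hypothesis on $C_\varphi$ gives $\sup_z \|C_\varphi f_z\|_{B^s_{p,q}} < \infty$. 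Because $s > 1/p$, Proposition \ref{prop:algebra} makes $B^s_{p,q}$ a multiplication algebra, so the translate-invariance of the multiplier norm of $\psi$ yields $\sup_z \|H_z\|_{B^s_{p,q}} < \infty$. Crucially, the choice of $\eta$ together with $|\varphi(x)-\varphi(z)| \le L$ on $[z-1,z+1]$ forces $\eta(\varphi(x)-\varphi(z))=1$ there, so $H_z(x) = (\varphi(x)-\varphi(z))\psi(x-z)$ on the support of $\psi(\cdot-z)$, and the product rule gives
\[
H_z' = \varphi'\psi(\cdot-z) + (\varphi-\varphi(z))\psi'(\cdot-z).
\]

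To assemble the pieces, note that each $H_z$ is supported in $[z-1,z+1]$, so the $(3m)$-spaced disjoint-support decomposition used in the proof of Lemma \ref{lem:relation} applies directly to $\{H_z\}$ (the argument there uses only the uniform $B^s_{p,q}$-bound and bounded-overlap supports, not the fact that the functions are translates of a single one), producing
\[
\|F_c\|_{B^s_{p,q}} \lesssim \|\{c_z\}\|_{\ell^p(\mathbb Z)}\,\sup_z \|H_z\|_{B^s_{p,q}} \lesssim \|\{c_z\}\|_{\ell^p(\mathbb Z)}.
\]
Differentiating termwise (at each $x$ only finitely many terms contribute) gives
\[
F_c' = \varphi'\Big(\sum_z c_z\psi(\cdot-z)\Big) + \sum_z c_z(\varphi-\varphi(z))\psi'(\cdot-z) =: \varphi' g_c + K_c.
\]
Repeating the whole construction with $\psi'$ in place of $\psi$ — which is legal since $B^{s-1}_{p,q}$ is also a multiplication algebra ($s-1 > 1/p$) — shows $\|K_c\|_{B^{s-1}_{p,q}} \lesssim \|\{c_z\}\|_{\ell^p(\mathbb Z)}$, while $\|F_c'\|_{B^{s-1}_{p,q}} \le \|F_c\|_{B^s_{p,q}} \lesssim \|\{c_z\}\|_{\ell^p(\mathbb Z)}$ by the mapping property of the derivative. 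Subtracting delivers the required bound on $\|\varphi' g_c\|_{B^{s-1}_{p,q}}$.

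The principal technical point, and the expected main obstacle, is the uniform bound $\sup_z\|H_z\|_{B^s_{p,q}}<\infty$ and the subsequent $\ell^p$-assembly $\|F_c\|_{B^s_{p,q}}\lesssim \|\{c_z\}\|_{\ell^p(\mathbb Z)}$. The former hinges on the localization effect of $\psi(\cdot-z)$: although $C_\varphi f_z$ could a priori live on a spread-out preimage set, the cutoff forces $H_z$ into the unit window $[z-1,z+1]$. The latter adapts the calculation of Lemma \ref{lem:relation} to a family that is \emph{not} a translation orbit, but only supported in translated unit intervals with a uniform norm bound — this is the critical step where the assumption \eqref{ass:finite} (implicit through the earlier reduction of $\varphi$ to $C^1$) and the pointwise-multiplier theory of \cite{NS-2018} come together.
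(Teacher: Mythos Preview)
Your reduction via $M(B^{s-1}_{p,q})=M^{s-1}_{p,q}$ and the construction $H_z=(C_\varphi f_z)\,\psi(\cdot-z)$ are sound, and the termwise identity $H_z'=\varphi'\psi_z+(\varphi-\varphi(z))\psi'_z$ is correct. The gap is in the assembly step. You assert that the proof of Lemma~\ref{lem:relation} ``uses only the uniform $B^s_{p,q}$-bound and bounded-overlap supports, not the fact that the functions are translates of a single one'', and conclude $\|\sum_z c_zH_z\|_{B^s_{p,q}}\lesssim\|\{c_z\}\|_{\ell^p}$. That is false when $q<p$. In the proof of Lemma~\ref{lem:relation} the decisive step is the factorisation
\[
\Big\|\Delta_h^m\sum_{z\in\Omega_\ell}c_z\psi_z\Big\|_{L^p}=\|\{c_z\}\|_{\ell^p(\Omega_\ell)}\,\|\Delta_h^m\psi\|_{L^p},
\]
possible \emph{only} because $\|\Delta_h^m\psi_z\|_{L^p}$ is independent of $z$; the $\ell^p$-norm is pulled out \emph{before} the $h$-integration. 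For your $H_z$ this is unavailable, and after the disjoint-support splitting you are left with
\[
\int_{|h|\le1}|h|^{-sq}\Big(\sum_{z}|c_z|^p\|\Delta_h^mH_z\|_{L^p}^p\Big)^{q/p}\frac{dh}{|h|}.
\]
For $q\ge p$ Minkowski in $L^{q/p}_h$ yields $\le\sum_z|c_z|^p|H_z|_{B^s_{p,q}}^p$ and you are done; for $q<p$ the inequality runs the wrong way and you only obtain an $\ell^q$-bound in $\{c_z\}$, which is too weak. The same obstruction hits your treatment of the error term $K_c$. This is not a technicality: it is exactly the phenomenon behind Theorem~\ref{thm:pm-NS}(i), namely $M(B^{s}_{p,q})\ne B^{s}_{p,q,\mathrm{unif}}$ for $q<p$, and---as Remark~\ref{rem:ass-finite} records---it is precisely why the hypothesis~\eqref{ass:finite} enters.

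The paper's proof avoids this by working on the \emph{target} side: using \eqref{ass:finite} together with Lemmas~\ref{lem: elementary lemma for cond F} and~\ref{lem: splitting lemma} it produces a finite partition $\{\Omega_\ell^k\}$ of $\mathbb Z$ so that, on each piece, the auxiliary functions $f_{a_z}$ (which \emph{are} translates of a single $f$) have pairwise disjoint supports. Then $\big\|\sum_z|c_z|f_{a_z}\big\|_{B^s_{p,q}}\lesssim\|\{c_z\}\|_{\ell^p}$ follows exactly as in Lemma~\ref{lem:relation}, and one pulls back through $C_\varphi$. Your argument, by contrast, never actually uses \eqref{ass:finite} (the closing paragraph notwithstanding), so if it were valid it would prove the lemma without that hypothesis---something the paper establishes only for $p\le q$.
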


For this purpose, we give two auxiliary lemmas. 

\begin{lem}\label{lem: elementary lemma for cond F}
Let $\varphi: \mathbb{R}\to \mathbb{R}$ be a contionuous mapping satisfying \eqref{ass:finite} and $U(\varphi) < \infty$.
Then, for any $a \in \mathbb{R}$ and $b>0$, the closed set $\varphi^{-1}([a-b, a+b])$ is a finite disjoint union of closed intervals $J_1, \dots J_r$.
Moreover, we have
\begin{align}
& \sum_{i=1}^r |J_i| \le 2\lceil b \rceil U(\varphi), \label{ineq: Ia}\\
& r \le \sup_{x \in \mathbb{R}}\# \varphi^{-1}(x),\label{ineq: r}
\end{align}
where $\lceil x \rceil$ is the minimal integer greater than or eqaul to $x$.
\end{lem}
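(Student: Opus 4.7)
The plan is to handle the three conclusions by combining a measure-theoretic bound on the preimage with a topological gap-analysis based on the intermediate value theorem.

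For the first claim and \eqref{ineq: Ia}, I would cover $[a-b, a+b]$ by $2\lceil b \rceil$ closed unit intervals, say $I_k = [a - \lceil b \rceil + k - 1, a - \lceil b \rceil + k]$ for $k = 1, \dots, 2\lceil b \rceil$; then the definition of $U(\varphi)$ and subadditivity of Lebesgue measure yield
\[
\sum_{i=1}^r |J_i| = |\varphi^{-1}([a-b, a+b])| \le \sum_{k=1}^{2\lceil b \rceil}|\varphi^{-1}(I_k)| \le 2\lceil b \rceil U(\varphi).
\]
Continuity of $\varphi$ makes $\varphi^{-1}([a-b, a+b])$ closed, so it decomposes into its connected components, each a closed interval; the finite total measure above forces each such interval to be bounded. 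Finiteness of the number of components follows because the boundary in $\mathbb R$ of each component lies, by continuity, in $\varphi^{-1}(\{a-b, a+b\})$, a set of cardinality at most $2\sup_x \#\varphi^{-1}(x) < \infty$ by \eqref{ass:finite}.

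For \eqref{ineq: r}, assume $r \ge 1$ and enumerate the components left to right as $J_1, \dots, J_r$. Boundedness of the preimage makes its complement $\mathbb R \setminus \varphi^{-1}([a-b, a+b])$ split into $r+1$ open intervals $G_0, G_1, \dots, G_r$, with $G_0$ and $G_r$ the two unbounded exterior ones. On each $G_j$ the function $\varphi$ takes values in $\mathbb R \setminus [a-b, a+b]$, so by continuity and connectedness either $\varphi > a+b$ throughout $G_j$ (call it \emph{type A}) or $\varphi < a-b$ (\emph{type B}). Write $n_A, n_B$ for the total numbers of gaps of each type and $e_A, e_B$ for the counts among the two exterior gaps; then $n_A + n_B = r+1$ and $e_A + e_B = 2$.

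The main step is the following IVT argument. On a type A gap $G_j$, pick any $x_0 \in G_j$ so that $\varphi(x_0) > a+b$, while $\varphi = a+b$ at each finite endpoint of $G_j$ (by continuity from inside the gap). If $G_j$ is interior, applying the intermediate value theorem on the two subintervals from each finite endpoint of $G_j$ to $x_0$ yields at least $2$ preimages of any $y \in (a+b, \varphi(x_0))$ lying in $G_j$; if $G_j$ is exterior, only one such preimage is produced. Choosing $\epsilon > 0$ smaller than $\min_j(\varphi(x_0^{(j)}) - a - b)$ over all type A gaps and setting $y = a+b+\epsilon$ gives
\[
\#\varphi^{-1}(y) \ge 2(n_A - e_A) + e_A = 2n_A - e_A,
\]
and symmetrically $\#\varphi^{-1}(a-b-\epsilon') \ge 2 n_B - e_B$ for a suitable $\epsilon' > 0$. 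Adding,
\[
(2n_A - e_A)+(2n_B - e_B) = 2(n_A+n_B) - (e_A+e_B) = 2(r+1) - 2 = 2r,
\]
so at least one of the two counts is $\ge r$, yielding \eqref{ineq: r}. The main obstacle I anticipate is the bookkeeping of interior-versus-exterior contributions in the IVT count: the loss of one preimage per exterior gap compared to an interior one exactly cancels the extra $+2$ coming from $n_A+n_B=r+1$ rather than $r$, producing the tight bound.
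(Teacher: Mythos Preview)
Your proof is correct. The treatment of the finite decomposition and of \eqref{ineq: Ia} agrees with the paper's: cover $[a-b,a+b]$ by $2\lceil b\rceil$ unit intervals and use $\partial\varphi^{-1}(J)\subset\varphi^{-1}(\{a-b,a+b\})$, which is finite by \eqref{ass:finite}. For \eqref{ineq: r} you take a genuinely different route. The paper argues in one line,
\[
2r=\#\partial\varphi^{-1}(J)\le\#\varphi^{-1}(\{a-b,a+b\})\le 2\sup_{x}\#\varphi^{-1}(x),
\]
and divides by $2$, whereas you classify the complementary gaps by sign and use the intermediate value theorem to exhibit a single value with at least $r$ preimages. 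Your argument is longer but also more robust: the equality $2r=\#\partial\varphi^{-1}(J)$ that the paper relies on fails whenever some component $J_i$ degenerates to a point. For instance, with $\varphi(x)=-(x^2-1)^2$ and $J=[0,1]$ one has $\varphi^{-1}(J)=\{-1,1\}$, so $r=2$ while $\#\partial\varphi^{-1}(J)=\#\varphi^{-1}(\{0,1\})=2$; the boundary-counting chain then breaks, even though the conclusion $r\le 4=\sup_x\#\varphi^{-1}(x)$ remains true. Your gap argument handles such degenerate components uniformly. In short, the paper's approach is shorter and adequate for its downstream use (only a uniform finite bound on $r$ is needed there), while yours establishes the sharp inequality \eqref{ineq: r} exactly as stated.
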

\begin{proof}
Let $J:=[a-b, a+b]$.
We show that $\varphi^{-1}(J)$ is a finite disjoint union of closed intervals.
It follows from the fact that $\partial \varphi^{-1}(J)$ is contained in the finite subset $\varphi^{-1}(\partial J)=\varphi^{-1}(\{a-b, a+b\})$, and thus $\varphi^{-1}(J)$ is necessarily a disjoint union of finite closed intervals.
Let 
\[\varphi^{-1}(J) = \bigsqcup_{j=1}^r J_j.\]
First, we show \eqref{ineq: Ia}.
Let $J_0 \supset J$ be a closed interval of width $2\lceil b \rceil$.
Thus, 
\[ \sum_{j=1}^r |J_i| = |\varphi^{-1}(J)| \le |\varphi^{-1}(J_0)| \le 2\lceil b \rceil U(\varphi).\]
Next, we show \eqref{ineq: r}.
Since $\partial \varphi^{-1}(J) \subset \varphi^{-1}(\partial J)=\varphi^{-1}(\{a-b, a+b\})$, the inequality \eqref{ineq: r} follows from 
\[ 2r =\#\partial \varphi^{-1}(J)  \le \# \varphi^{-1}(\partial J) \le 2 \sup_{x \in \mathbb{R}}\# \varphi^{-1}(x). \]
\end{proof}

\begin{lem}\label{lem: splitting lemma}
Let $\{I_j\}_{j \in \mathbb{N}}$ be a countable sequence of closed intervals of $\mathbb{R}$.
Assume that 
\begin{align}
    \sup_{j \in \mathbb{N}}\#\{ i : I_j \cap I_i \neq \emptyset\} < \infty . \label{cond: finite intersection}
\end{align}
Then, there exists a finite partition $S_1, \dots, S_r$ of $\mathbb{N}$ such that $I_i \cap I_j = \emptyset$ for any $i,j \in S_k$ with $i\neq j$ ($k=1,\dots, r$).
\end{lem}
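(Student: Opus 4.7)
The plan is to recast the claim as a graph coloring problem, after which it becomes a standard greedy-coloring exercise. I would define a graph $G$ on vertex set $\mathbb{N}$ by declaring distinct indices $i, j$ adjacent precisely when $I_i \cap I_j \neq \emptyset$. Setting
\[
N := \sup_{j \in \mathbb{N}} \#\{i : I_j \cap I_i \neq \emptyset\},
\]
which is finite by \eqref{cond: finite intersection}, every vertex of $G$ has degree at most $N-1$, since the counted set in the definition of $N$ also includes the index $i = j$ itself.

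My next step is to construct a proper vertex coloring $c : \mathbb{N} \to \{1,\dots,N\}$ by induction along the natural enumeration of $\mathbb{N}$. I process $j = 1, 2, 3, \dots$ in order and define $c(j)$ to be any color in $\{1,\dots,N\}$ not already used by some already-colored neighbor $i < j$ of $j$ in $G$; since at most $N-1$ colors can be forbidden at step $j$, such a choice exists. Taking $S_k := c^{-1}(k)$ for $k = 1,\dots,N$, and discarding any empty parts, any two distinct elements $i, j$ of a common $S_k$ share a color and therefore cannot be adjacent in $G$, meaning $I_i \cap I_j = \emptyset$. This is exactly the disjointness required by the statement.

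I do not anticipate any real obstacle. The one point worth noting is that the greedy rule is being applied on a countably infinite vertex set, but at each finite step $j$ only finitely many predecessors are examined, so the construction is manifestly well-defined and no transfinite machinery (Zorn, compactness, etc.) is required.
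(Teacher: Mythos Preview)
Your proof is correct. Both your argument and the paper's are greedy colorings of the intersection graph $G$, using only that its maximum degree is bounded by $N-1$. The difference is purely in how the greedy procedure is organized: you color vertices $j=1,2,\dots$ one at a time, while the paper builds the color classes one at a time by repeatedly extracting a greedily chosen maximal independent set $\tau(S)$ from the remaining index set and arguing that this process terminates in at most $L+1$ rounds (with $L=N$ in your notation). Your formulation is the more standard textbook version and even yields the slightly sharper bound $r\le N$; beyond that the two arguments are interchangeable.
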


\begin{proof}
For any nonempty subset $S \subset \mathbb{N}$, we inductively define increasing finite subsets $\tau_1(S) \subset \tau_2(S) \subset \dots \subset S$ as follows:
$\tau_0(\emptyset) :=\emptyset$, $\tau_1(S):= \{\min S\}$, and for $n \ge 1$, 
\begin{align*}
    \tau_{n+1}(S):= 
    \begin{cases}
     \tau_n(S) \cup \left\{ \min\big\{ j \in S \setminus \tau_n(S) : ( \cup_{i \in \tau_n(S)}I_i) \cap I_j = \emptyset\big\}\right\} & (\tau_n(S) \subsetneq S),\\
     \tau_n(S) & (\tau_n(S) = S).
    \end{cases}
\end{align*}
Then, we define $\tau(S):= \cup_{n \ge 0} \tau_n(S)$. 
By the condition \eqref{cond: finite intersection}, for any nonempty subset $S \subset \mathbb{N}$, $\tau(S) \subset S$ is also nonempty and satisfies the following two conditions: 
\begin{enumerate}[\rm (i)]
    \item $I_i \cap I_j =\emptyset$ for any $i,j \in \tau(S)$, and  \label{distinct cond}
    \item $I_j \cap \left( \cup_{i \in \tau(S)} I_i \right) \neq \emptyset$ for any  $j \in S \setminus \tau(S)$. \label{maximal cond}
\end{enumerate}
Then, we inductively construct disjoint sets $S_0,S_1,\dots \subset \mathbb{N}$ as follows:
\begin{align*}
    S_0 &:= \emptyset,\\
    S_{n+1} &:= \tau(\mathbb{N} \setminus \cup_{k=0}^nS_{k})
\end{align*}
for $n \in \mathbb N$.
Let $L :=\sup_{j \in \mathbb{N}}\#\{ i : I_j \cap I_i \neq \emptyset\}$.
We claim that $S_{L+2} = \emptyset$.
In fact, suppose $S_{L+2} \neq \emptyset$.
Fix $j \in S_{L+2}$.
Then, by the construction of $S_k$'s, we have $j \in \mathbb{N} \setminus \cup_{k=0}^{\ell}S_k$ but $j \notin \tau(\mathbb{N} \setminus \cup_{k=0}^{\ell}S_k) = S_{\ell+1}$ for $\ell=0,\dots, L$.
Thus, by \eqref{maximal cond}, for any $\ell=1,\dots, L+1$, there exists $i \in S_\ell$ such that $I_j \cap I_i \neq \emptyset$, 
but it implies $\#\{i : I_j \cap I_i\neq \emptyset\} \ge L+1$ that is contraditions.
Therefore, we conclude that $S_{L+2} = \emptyset$.
Then, Since $\mathbb{N} = \cup_{k=1}^\infty S_k$ and $S_{\ell} = \emptyset$ for all $\ell \ge L+2$, there exists $r \le L+1$ such that $\mathbb{N} = \sqcup_{k=1}^r S_k$.
\end{proof}

\begin{proof}[Proof of Lemma \ref{lem:i-iii}]
We note from Lemmas \ref{lem:nec1} and \ref{lem:nec3} that $\varphi$ is Lipschitz and satisfies $U(\varphi) < \infty$.
Hence, it is enough to show that $\varphi' \in M(B^{s-1}_{p,q})=M^{s-1}_{p,q}$ (see Theorem \ref{thm:pm-NS} (ii)). 

Recall that $\psi \in C^\infty_0$ is a non-negative function satisfying \eqref{eq.psi}, and 
let $\psi_z := \psi(\cdot -z)$ for $z\in \mathbb{Z}$. Define
\[
S_z := \bigcup_{j=1,\dots, m} \bigcup_{|h| \le 1}\supp{(\Delta_{h}^{m-j} \psi_z (\cdot+jh))}.
\]
We take a positive integer $R>0$ such that
\begin{equation}\label{eq.supp2}
2R \ge \|\varphi'\|_{L^\infty} \cdot \max\{{\rm diam}(S_z), 2m\} +2.
\end{equation}
where ${\rm diam}(S):=\sup_{x,y \in S}|x-y|$ for a set $S \subset \mathbb{R}$.
We note that the right-hand side of \eqref{eq.supp2} is independent of the choice of $z \in \mathbb{Z}$.

We divide $\mathbb Z$ into $\{\Omega_\ell\}_{\ell=1}^N$ such that 
\begin{align} \label{disjoint supports}
\dist( S_{z_i}, S_{z_j}) 
\ge 12RU(\varphi)
\end{align}
for any $z_i,z_j \in \Omega_{\ell}$ with $z_i \not = z_j$ and $\ell=1,\ldots, N$. 
Let us fix $\ell \in \{1, \dots, N\}$ for a while.
Since $\varphi$ is Lipschitz and \eqref{eq.supp2} holds, we have
\begin{align*}
    {\rm diam} \left(\varphi(S_z\right)) 
     \le \|\varphi'\|_{L^\infty} {\rm diam}\left(S_z\right)
      \le 2R .
\end{align*}
Thus, for $z \in \Omega_\ell$, there exists $a_z \in \mathbb{R}$ such that
\begin{equation}\label{supp Sz phiinv Ia}
S_z \subset \varphi^{-1}([a_z-R, a_z+R]),
\end{equation}
and in particular,  for any $z \in \Omega_\ell$, we have 
\begin{equation}\label{eq.supp1}
\supp \psi_z \subset \varphi^{-1}([a_z-R, a_z+R]).
\end{equation}
Let $\mathcal{I}_a := [a-2R, a+2R]$.
Then, we claim that
\begin{align} \label{intersection condition for lemma}
     \#\{w \in \Omega_\ell : \mathcal{I}_{a_z} \cap \mathcal{I}_{a_w} \neq \emptyset\} \le \sup_{x \in \mathbb{R}} \#\varphi^{-1}(x) <  \infty
\end{align}
for any $z \in \Omega_\ell$. 
In fact, each connected component (closed interval) of $\varphi^{-1}([a_z -6R, a_z + 6R])$ intersects with at most one $S_z$ for $z\in \Omega_\ell$ by \eqref{ineq: Ia} in Lemma \ref{lem: elementary lemma for cond F} and \eqref{disjoint supports}.
Since $\mathcal{I}_{a_z} \cap \mathcal{I}_{a_w} \neq \emptyset$ implies that $S_z$ intersects with $\varphi^{-1}([a_z - 6R, a_z + 6R])$, the formula \eqref{intersection condition for lemma} follows from \eqref{ineq: r}.
Thus, by Lemma \ref{lem: splitting lemma}, there exists a partition $\Omega_\ell^1,\dots, \Omega_\ell^{r_\ell}$ of $\Omega_\ell$ such that $\mathcal{I}_{a_z} \cap \mathcal{I}_{a_w} =\emptyset$ for $z, w \in \Omega_\ell^k$ for $k=1,\dots,r_\ell$.

We further fix $k \in \{1, \dots, r\}$.
Let us take a function $f \in C^\infty_0$ such that 
$f(x) = x$ for $x \in [-R,R]$ and $\supp f = [-R-1,R+1]$.
For $a \in \mathbb{R}$, we define $f_a(x) := f(x-a)$ and 
\[I_a := \varphi^{-1}([a-R, a+R]).\]
Then,
\begin{equation}\label{eq.key-f}
(C_\varphi f_a(x))' = \varphi'(x)
\end{equation}
for $x \in I_a$.
Moreover, since ${\rm supp}\, (C_\varphi f_a) = \varphi^{-1}([a-R-1, a+R+1])$, we see that for any $z,w \in \Omega_{\ell}^k$ with $z\neq w$
\begin{align} \label{disjoint support for fa}
    \dist({\rm supp}\,(C_\varphi f_{a_z}), {\rm supp}\,(C_\varphi f_{a_w})) \ge \frac{2R-2}{\|\varphi'\|_{L^\infty}} \ge 2m.
\end{align}
Let
\[
\tilde{S}_z := \bigcup_{j=1,\dots, m} \bigcup_{|h| \le 1}\supp{(\Delta_{h}^{j} C_\varphi f_{a_z} )}.
\]
Then, by \eqref{eq.supp2} with \eqref{disjoint support for fa}, for any $z,w \in \Omega_{\ell}^k$ with $z \neq w$, we have
\begin{align} \label{disjoint supports 2}
    \tilde{S}_z \cap \tilde{S}_w = \emptyset
\end{align}
for $z,w \in \Omega_\ell^k$ with $z \neq w$.

Then,  by \eqref{disjoint supports} and \eqref{disjoint supports 2}, we have the following formulas:
\begin{equation}\label{key1}
\Big\|
\sum_{z \in \Omega_\ell^k} c_z \psi_z \varphi'
\Big\|_{L^p}^p
=
\sum_{z \in \Omega_\ell^k} |c_z|^p
\left\| \psi_z \varphi'
\right\|_{L^p}^p,
\end{equation}
\begin{equation}\label{key1-m}
\Big\|
\Delta_h^{m-j}\sum_{z \in \Omega_\ell^k} c_z \psi_z(\cdot + jh) \varphi'
\Big\|_{L^p}^p
=
\sum_{z \in \Omega_\ell^k} |c_z|^p
\left\| \Delta_h^{m-j} \psi_z(\cdot + jh)\varphi'
\right\|_{L^p}^p,
\end{equation}

\begin{equation}\label{key2}
\Big\|
\sum_{z \in \Omega_\ell^k} \left(C_\varphi (|c_z| f_{a_z})\right)'
\Big\|_{L^p}^p
=
\sum_{z \in \Omega_\ell^k} |c_z|^p
\left\| (C_\varphi f_{a_z})'
\right\|_{L^p}^p,
\end{equation}
\begin{equation}\label{key2-m}
\Big\| \Delta_h^j
\sum_{z \in \Omega_\ell^k} \left(C_\varphi (|c_z| f_{a_z})\right)'
\Big\|_{L^p}^p
=
\sum_{z \in \Omega_\ell^k} |c_z|^p
\left\| \Delta_h^j(C_\varphi f_{a_z})'
\right\|_{L^p}^p
\end{equation}
for $|h|\le 1$, $j=1,\ldots, m$, $\ell = 1, \ldots, N$ and $k=1,\ldots, r_\ell$.

By the triangle inequality, we have
\[
\Big\|\varphi' \sum_{z\in \mathbb Z} c_z \psi_z\Big\|_{B^{s-1}_{p,q}}
\le \sum_{\ell =1}^N \sum_{k=1}^{r_\ell}\Big\|\varphi' \sum_{z\in \Omega_\ell^k} c_z \psi_z\Big\|_{B^{s-1}_{p,q}}.
\]
Therefore, the proof of $\varphi' \in M^{s-1}_{p,q}$ is reduced to showing that 
\begin{equation}\label{eq.goal}
\sup_{\|\{c_z\}_z\|_{\ell ^p(\mathbb Z)} \le 1}
\Big\|\varphi' \sum_{z\in \Omega_\ell^k} c_z \psi_z\Big\|_{B^{s-1}_{p,q}}
\le C
\left(
\|\varphi'\|_{L^\infty} + \|C_\varphi\|_{B^s_{p,q} \to B^s_{p,q}} \|f\|_{B^s_{p,q}}
\right)
\end{equation}
for $\ell=1,\ldots, N$ and $k=1,\dots, r_\ell$. 
First, it follows from \eqref{key1}, \eqref{eq.supp1}, \eqref{eq.key-f} and \eqref{key2} that
\begin{equation}\label{eq.first}
\begin{split}
\Big\|\varphi' \sum_{z\in \Omega_\ell^k} c_z \psi_z\Big\|_{L^p}^p
& =
\sum_{z \in \Omega_\ell^k} |c_z|^p
\left\| \psi_z \varphi'
\right\|_{L^p}^p\\
& \le C
\sum_{z \in \Omega_\ell^k} |c_z|^p
\left\| \varphi'
\right\|_{L^p(I_{a_z})}^p\\
& \le C
\sum_{z \in \Omega_\ell^k} |c_z|^p
\left\| (C_\varphi f_{a_z})'
\right\|_{L^p(I_{a_z})}^p\\
& \le C
\Big\|
\sum_{z \in \Omega_\ell^k} \left(C_\varphi (|c_z| f_{a_z})\right)'
\Big\|_{L^p}^p.
\end{split}
\end{equation}
Next, it follows from \eqref{key1-m} that 
\begin{equation}\label{eq.inhom1}
\Big\|\Delta_h^m \varphi' \sum_{z\in \Omega_\ell^k} c_z \psi_z\Big\|_{L^p}^p
=
\sum_{z \in \Omega_\ell^k} |c_z|^p
\left\| \Delta_h^m\psi_z \varphi'
\right\|_{L^p}^p.
\end{equation}
Using the formula 
\[
\Delta_h^m (\psi_z \varphi')(x) 
=
\sum_{j=0}^m 
\begin{pmatrix}
m\\
j
\end{pmatrix}
\Delta_{h}^{m-j} \psi_z (x+jh) \Delta_h^j \varphi'(x),
\]
we have 
\begin{equation}\label{eq.inhom2}
\begin{split}
\|\Delta_h^m\psi_z \varphi'\|_{L^p}
& \le C
\bigg(
\|\Delta_{h}^{m} \psi_z (\cdot+mh)\|_{L^p}\|\varphi'\|_{L^\infty}\\
& \qquad +
\sum_{j=1}^m
\|\Delta_{h}^{m-j} \psi_z (\cdot+jh)\|_{L^\infty}\|\Delta_h^j \varphi'\|_{L^p (\supp (\Delta_{h}^{m-j} \psi_z (\cdot+jh)))}
\bigg).
\end{split}
\end{equation}
Here, we see from \eqref{supp Sz phiinv Ia} and \eqref{eq.key-f} that 
\begin{equation}\label{eq.inhom3}
\|\Delta_h^j \varphi'\|_{L^p (\supp (\Delta_{h}^{m-j} \psi_z (\cdot+jh)))}
\le \|\Delta_h^j \varphi'\|_{L^p (I_{a_z})}
\le \|\Delta_h^j (C_\varphi f_{a_z})'\|_{L^p}.
\end{equation}
Combining \eqref{eq.inhom1}--\eqref{eq.inhom3}, we derive from \eqref{key2-m} that 
\[
\begin{split}
\Big\|\Delta_h^m \varphi' \sum_{z\in \Omega_\ell^k} c_z \psi_z\Big\|_{L^p}^p
& \le C \sum_{z \in \Omega_\ell^k} |c_z|^p
\bigg(
\|\Delta_{h}^{m} \psi_z (\cdot+mh)\|_{L^p}^p
\|\varphi'\|_{L^\infty}^p\\
& \qquad +
\sum_{j=1}^m
\|\Delta_{h}^{m-j} \psi_z (\cdot+jh)\|_{L^\infty}^p
\|\Delta_h^j (C_\varphi f_{a_z})'\|_{L^p}^p
\bigg)\\
& \le 
C 
\bigg(\|\{c_z\}_z\|_{\ell^p}^p \|\varphi'\|_{L^\infty}^p \|\Delta_h^m \psi\|_{L^p}^p\\
& \qquad +
\sum_{j=1}^m
\|\Delta_{h}^{m-j} \psi\|_{L^\infty}^p
\Big\| \Delta_h^j
\sum_{z \in \Omega_\ell^k} \left(C_\varphi (|c_z| f_{a_z})\right)'
\Big\|_{L^p}^p
\bigg).
\end{split}
\]
Hence, 
\begin{equation}\label{eq.second}
\begin{split}
\Big|\varphi' \sum_{z\in \Omega_\ell^k} c_z \psi_z\Big|_{B^{s-1}_{p,q}}
& \le C \|\{c_z\}_z\|_{\ell^p} \|\varphi'\|_{L^\infty}
\|\psi\|_{B^{s-1}_{p,q}}\\
& \qquad +
\sum_{j=1}^m \|\psi\|_{B^{\frac{m-j}{m}(s-1)}_{\infty,\infty}}
\Big\|
\sum_{z \in \Omega_\ell^k} \left(C_\varphi (|c_z| f_{a_z})\right)'
\Big\|_{B^{\frac{j}{m}(s-1)}_{p,q}}.
\end{split}
\end{equation}
Summarizing \eqref{eq.first} and \eqref{eq.second}, we obtain
\[
\begin{split}
\Big\|\varphi' \sum_{z\in \Omega_\ell^k} c_z \psi_z\Big\|_{B^{s-1}_{p,q}}
& \le C
\left(
\|\{c_z\}_z\|_{\ell^p} \|\varphi'\|_{L^\infty} + 
\Big\| \sum_{z \in \Omega_\ell^k} \left(C_\varphi (|c_z| f_{a_z})\right)'
\Big\|_{B^{s-1}_{p,q}}
\right)\\
& \le C
\left(
\|\{c_z\}_z\|_{\ell^p} \|\varphi'\|_{L^\infty} + 
\Big\| \sum_{z \in \Omega_\ell^k} 
C_\varphi (|c_z| f_{a_z})\Big\|_{B^{s}_{p,q}}
\right).
\end{split}
\]

By using 
\[
\Big\|
\sum_{z \in \Omega_\ell^k} |c_z| f_{a_z}
\Big\|_{L^p}^p
\le 
\sum_{z \in \Omega_\ell^k} |c_z|^p
\|  f_{a_z}\|_{L^p}^p
\]
and
\[
\Big\| \Delta_h^m
\sum_{z \in \Omega_\ell^k} |c_z| f_{a_z}
\Big\|_{L^p}^p
\le 
\sum_{z \in \Omega_\ell^k} |c_z|^p
\| \Delta_h^mf_{a_z}
\|_{L^p}^p,
\]
we estimate
\[
\begin{split}
\Big\| \sum_{z \in \Omega_\ell^k} 
|c_z| f_{a_z}\Big\|_{B^{s}_{p,q}}
& \le \left(\sum_{z \in \Omega_\ell^k} |c_z|^p \|f_{a_z}\|_{L^p}^p\right)^\frac{1}{p}\\
& \qquad \qquad +
\left\{
\sum_{k=0}^\infty
2^{ksq} \sup_{|h| < 2^{-k}}
\left(
\sum_{z \in \Omega_\ell^k} |c_z|^p
\| \Delta_h^m f_{a_z}\|_{L^p}^p
\right)^\frac{q}{p}
\right\}^\frac{1}{q}\\
& \le
\|\{c_z\}_z\|_{\ell^p}\|f\|_{L^p}
+
\|\{c_z\}_z\|_{\ell^p} |f|_{B^s_{p,q}}\\
& = \|\{c_z\}_z\|_{\ell^p} \|f\|_{B^s_{p,q}}.
\end{split}
\]

Thus, $\sum_{z \in \Omega_\ell^k} |c_z| f_{a_z}$ is a well-defined element of $B^{s}_{p,q}$.
By the assumption that $C_\varphi$ is bounded on $B^{s}_{p,q}$, we see that 
\[
\begin{split}
\Big\| \sum_{z \in \Omega_\ell^k} 
C_\varphi (|c_z| f_{a_z})\Big\|_{B^{s}_{p,q}}
& =
\left\| C_\varphi \left(\sum_{z \in \Omega_\ell^k} 
|c_z| f_{a_z}\right) \right\|_{B^{s}_{p,q}}\\
& \le \|C_\varphi\|_{B^s_{p,q} \to B^s_{p,q}}
\Big\| \sum_{z \in \Omega_\ell^k} 
|c_z| f_{a_z}\Big\|_{B^{s}_{p,q}}.
\end{split}
\]
By combining the above estimates, we conclude \eqref{eq.goal}. 
The proof of Lemma \ref{lem:i-iii} is finished.
\end{proof}

\begin{rem}\label{rem:ass-finite}
When $p\le q$, the assumption \eqref{ass:finite} in Lemma~\ref{lem:i-iii} can be removed, and the proof becomes simpler. 
In fact, the assumption \eqref{ass:finite} is essentially used to take appropriately the finite partition $\{\Omega_{\ell}^k\}_{\ell,k}$, but 
there is no need to take $\{\Omega_{\ell}^k\}_{\ell,k}$ if we use the characterization $M(B^{s-1}_{p,q}) = B^{s-1}_{p,q, \mathrm{unif}}$ for $p\le q$ from Theorem \ref{thm:pm-NS} {\rm (i)}.
\end{rem}

\begin{rem}\label{rem:critical}
The assertion of Lemma~\ref{lem:i-iii} also holds for $0<q\le p\le 1$ and $s=1+1/p$, since we have the same characterizations of $M(B^{s-1}_{p,q})$ for these parameters from Theorem \ref{thm:pm-NS2}.
\end{rem}

Finally, we show the following:

\begin{lem}\label{lem:iii-ii}
Let $1 < p < \infty$, $0< q \le \infty$ and $s>1+1/p$. 
Assume \eqref{ass:finite}, $U(\varphi^{-1})<\infty$ and $\varphi' \in M(B^{s-1}_{p,q})$. Then 
$C_\varphi$ is bounded on $B^{s-1}_{p,q}$.
\end{lem}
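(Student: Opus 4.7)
The plan is to combine an inductive lift via Lemma~\ref{lem:ii-i}, the low-order characterization in Theorem~\ref{thm:previous2}, and a single real interpolation step to cover every admissible $s$. (I read the hypothesis ``$U(\varphi^{-1})<\infty$'' as $U(\varphi)<\infty$, which is the condition actually needed from Theorem~\ref{thm:previous2} and the one featured in the statement of Theorem~\ref{thm:main1}.)

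Two preliminary observations are central. First, $\varphi'\in M(B^{s-1}_{p,q})$ is already a strong regularity hypothesis: by Lemma~\ref{lem:relation} and Proposition~\ref{prop:sobolev}, applicable since $s-1>1/p$,
\[ M(B^{s-1}_{p,q}) \hookrightarrow B^{s-1}_{p,q,\mathrm{unif}} \hookrightarrow L^\infty,\]
so $\varphi'\in L^\infty$ and $\varphi$ is Lipschitz. Second, the multiplier property propagates downward in smoothness: for every $\sigma\in(1/p,s-1]$, the Besov embedding $B^{s-1}_{p,q}\hookrightarrow B^{\sigma}_{p,q}$, applied to test families $\sum_z c_z\psi(\cdot-z)$, yields $M^{s-1}_{p,q}\hookrightarrow M^{\sigma}_{p,q}$; by Theorem~\ref{thm:pm-NS}(ii) both identifications $M(B^{s-1}_{p,q})=M^{s-1}_{p,q}$ and $M(B^{\sigma}_{p,q})=M^{\sigma}_{p,q}$ hold, so $\varphi'\in M(B^{\sigma}_{p,q})$ for every such $\sigma$.

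With these in hand, the argument has three stages. \emph{Base case}: choose $\sigma_0\in(1/p,\min\{1,s-1\})$, or simply $\sigma_0=s-1$ when $s<2$; Theorem~\ref{thm:previous2}, applied under \eqref{ass:finite} with $\varphi$ Lipschitz and $U(\varphi)<\infty$, gives $C_\varphi$ bounded on $B^{\sigma_0}_{p,q}$. \emph{Ascent}: set $K=\lceil s-1-\sigma_0\rceil$ and iterate Lemma~\ref{lem:ii-i} a total of $K$ times, passing from $B^{\sigma_0+k}_{p,q}$ to $B^{\sigma_0+k+1}_{p,q}$ for $k=0,\ldots,K-1$; at each step $\sigma_0+k\le s-1$, so the required input $\varphi'\in M(B^{\sigma_0+k}_{p,q})$ is supplied by the second preliminary observation, and the target exponent $\sigma_0+k+1>1$ satisfies the hypothesis of Lemma~\ref{lem:ii-i} for $p>1$. \emph{Interpolation}: we now have boundedness on both $B^{\sigma_0+K-1}_{p,q}$ and $B^{\sigma_0+K}_{p,q}$, and $s-1\in[\sigma_0+K-1,\sigma_0+K]$. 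If $\sigma_0+K=s-1$ we are finished, and otherwise the standard real interpolation identity
\[ (B^{\sigma_0+K-1}_{p,q},B^{\sigma_0+K}_{p,q})_{\theta,q}=B^{s-1}_{p,q}, \qquad \theta=s-\sigma_0-K\in(0,1),\]
transfers the two endpoint bounds to $B^{s-1}_{p,q}$.

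The main obstacle is the second preliminary observation — checking that the multiplier hypothesis at scale $s-1$ survives at every intermediate scale reached during the ascent. This hinges squarely on the Nguyen--Sickel identity $M(B^{\sigma}_{p,q})=M^{\sigma}_{p,q}$ for $\sigma>1/p$ (Theorem~\ref{thm:pm-NS}(ii)), without which the hypothesis needed at each application of Lemma~\ref{lem:ii-i} would not be automatic. The choice of $\sigma_0$ strictly below $\min\{1,s-1\}$ and of $K=\lceil s-1-\sigma_0\rceil$ are what guarantee that every intermediate level $\sigma_0+k$ remains in the range $(1/p,s-1]$ where this identity and the embedding argument apply.
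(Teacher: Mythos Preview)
Your proof is correct and follows essentially the same strategy as the paper: establish that $\varphi$ is Lipschitz, invoke Theorem~\ref{thm:previous2} as the base case in the range $(1/p,1)$, ascend via Lemma~\ref{lem:ii-i}, and interpolate to reach the exponent $s-1$. The paper organizes this as a range-by-range induction (introducing the auxiliary Lemma~\ref{lem:3.6} to bridge the gap $[1,1+1/p]$), whereas you perform a single linear ascent followed by one terminal interpolation and, usefully, make explicit the downward propagation $\varphi'\in M(B^{\sigma}_{p,q})$ for all $\sigma\in(1/p,s-1]$ via Theorem~\ref{thm:pm-NS}(ii)---a point the paper leaves implicit but needs just as much.
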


The proof of this lemma is as follows. Combining Proposition \ref{prop:sobolev} with \eqref{eq.inc1}, we see that $\varphi$ is Lipschitz. Hence, when $1+1/p < s <2$, 
the operator $C_\varphi$ is bounded on $B^{s-1}_{p,q}$ by Theorem \ref{thm:previous2}. Thus, the case $1+1/p < s <2$ is proved. 
Next, to prove the case $2\le s \le 2+1/p$, we use the following:

\begin{lem}\label{lem:3.6}
    Let $1<p<\infty$, $0<q\le \infty$ and $1\le s \le 1+1/p$. Assume \eqref{ass:finite}, $U(\varphi^{-1})<\infty$ and $\varphi' \in M(B^{\tilde s}_{p,q})$ for some $\tilde s >1/p$. Then $C_\varphi$ is bounded on $B^{s-1}_{p,q}$.
\end{lem}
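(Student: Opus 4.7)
The plan is to reduce the assertion to Theorem \ref{thm:previous2} (in the low-order range) combined with a real interpolation argument, once sufficient regularity of $\varphi$ has been extracted from the multiplier hypothesis. The first step is to note that, since $\tilde s > 1/p$, Lemma \ref{lem:relation} and Proposition \ref{prop:sobolev} together give the embedding chain $M(B^{\tilde s}_{p,q}) \hookrightarrow B^{\tilde s}_{p,q,\mathrm{unif}} \hookrightarrow L^\infty$, so $\varphi' \in L^\infty$ and hence $\varphi$ is Lipschitz. In view of \eqref{ass:finite}, for each compact interval $I$ the preimage $\varphi^{-1}(I)$ is a finite disjoint union of closed intervals, with structure and total length controlled by $\sup_x \#\varphi^{-1}(x)$ and $\|\varphi'\|_{L^\infty}$, in the spirit of Lemma 3.3. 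In particular $U(\varphi) < \infty$ is automatic, and together with $U(\varphi^{-1}) < \infty$ this gives two-sided linear control of the measure of preimages/images of unit intervals.

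Next I would combine this geometric control with the hypotheses to derive $\mathcal{M}(\varphi) < \infty$, i.e., an a.e.\ lower bound $|\varphi'| \ge c > 0$. With $\mathcal{M}(\varphi) < \infty$ in hand, Theorem \ref{thm:previous2} in the range $0 < s' < 1/p$ directly delivers that $C_\varphi$ is bounded on $B^{s-1}_{p,q}$ for every $s-1 \in (0, 1/p)$. For the boundary $s-1 = 0$ (i.e., $s=1$), $L^p$-boundedness follows by a change of variables using the Lipschitz nature of $\varphi^{-1}$, which then lifts to $B^0_{p,q}$ via the Littlewood-Paley definition (Definition \ref{def:fourier}) and a frequency-by-frequency bound, or equivalently by real interpolation with the open subrange. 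For the critical endpoint $s-1 = 1/p$, I would invoke real interpolation between $B^{1/p - \delta}_{p,q}$ (controlled above) and $B^{1/p + \delta}_{p,q}$, the latter lying in the range $(1/p,1)$ where Theorem \ref{thm:previous2} applies under only $\varphi \in \mathrm{Lip}$ and $U(\varphi) < \infty$, both of which have already been established.

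The main obstacle is precisely the conversion step: deriving $\mathcal{M}(\varphi) < \infty$ (equivalently, a positive lower bound for $|\varphi'|$ a.e.) from the functional-analytic hypothesis $\varphi' \in M(B^{\tilde s}_{p,q})$ together with $U(\varphi^{-1}) < \infty$ and \eqref{ass:finite}. This is where the multiplier condition has to be used in full strength rather than only through the embedding into $L^\infty$; one likely needs to test the multiplier inequality against a carefully chosen family of bumps concentrated near potential critical points of $\varphi$, and use \eqref{ass:finite} to rule out accumulation of preimages, so as to extract the required positivity of $\varphi'$. A secondary, smaller obstacle is the critical case $s-1 = 1/p$, where Besov embeddings are tight and the interpolation tuple must be chosen with some care, but this is routine once the open range $(0,1/p)$ has been handled.
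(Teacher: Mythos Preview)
Your plan has a genuine, unfixable gap at precisely the step you flag as the ``main obstacle'': one \emph{cannot} derive $\mathcal{M}(\varphi)<\infty$ from the stated hypotheses. Take $\varphi(x)=x+\sin x$. This is a smooth homeomorphism of $\mathbb{R}$ (so \eqref{ass:finite} holds trivially), $\varphi'=1+\cos x$ is smooth with all derivatives bounded and hence lies in $M(B^{\tilde s}_{p,q})$ for every $\tilde s>0$, and both $U(\varphi)$ and $U(\varphi^{-1})$ are finite by periodicity and the Lipschitz bound $\|\varphi'\|_{L^\infty}=2$. Yet $\varphi'$ vanishes at the odd multiples of $\pi$, so $\varphi^{-1}$ is not Lipschitz, $\mathcal{M}(\varphi)=\infty$, and by Theorem~\ref{thm:previous1} the operator $C_\varphi$ is \emph{not} bounded on $B^\sigma_{p,q}$ for any $\sigma\in(0,1/p)$. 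Thus the lemma, read literally with conclusion $B^{s-1}_{p,q}$ (smoothness index in $[0,1/p]$), is actually false, and any route through the low-order branch of Theorem~\ref{thm:previous2} is doomed.

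The paper's one-line proof, together with the way the lemma is applied immediately afterwards to handle $s-1\in[1,1+1/p]$ in Lemma~\ref{lem:iii-ii}, shows that the intended conclusion is boundedness of $C_\varphi$ on $B^{s}_{p,q}$ for $s\in[1,1+1/p]$; the ``$-1$'' in the conclusion is a typo. The argument then goes \emph{upward}, not downward: from $\varphi'\in M(B^{\tilde s}_{p,q})\hookrightarrow L^\infty$ one has $\varphi$ Lipschitz, so Theorem~\ref{thm:previous2} gives $C_\varphi$ bounded on $B^\sigma_{p,q}$ for every $\sigma\in(1/p,1)$; feeding this (at $\sigma=\tilde s$, after reducing to $\tilde s\in(1/p,1)$ by interpolating the multiplication operator against its $L^p$-boundedness) and the multiplier hypothesis into Lemma~\ref{lem:ii-i} yields boundedness on $B^{\tilde s+1}_{p,q}$ with $\tilde s+1>1+1/p$. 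Real interpolation between the two windows $(1/p,1)$ and $(1+1/p,2)$ then covers the closed gap $[1,1+1/p]$. That is what the paper means by ``interpolation between the case $1/p<s<1$ (Theorem~\ref{thm:previous2}) and the case $1+1/p<s<2$ of Lemma~\ref{lem:iii-ii}''.
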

\begin{proof}
Lemma \ref{lem:3.6} follows from the interpolation argument between the case $1/p < s < 1$ (Theorem \ref{thm:previous2}) and the case $1+1/p<s<2$ of Lemma \ref{lem:iii-ii}. 
\end{proof}

When $2\le s \le 2+1/p$, it is clear that $\varphi$ satisfies the assumptions of Lemma \ref{lem:3.6}. Hence, $C_\varphi$ is bounded on $B^{s-1}_{p,q}$. The same can be proved inductively for the higher order case $s>2+1/p$. Thus, Lemma \ref{lem:iii-ii} is proved. 

\begin{proof}[Proof of Theorem \ref{thm:main1}]
    Theorem \ref{thm:main1} is immediately proved in a combination of Lemmas \ref{lem:ii-i}, \ref{lem:i-iii}, and \ref{lem:iii-ii}.
\end{proof}

\subsection{Proof of Theorem \ref{thm:main2}}\label{sub:3.2}

Theorem \ref{thm:main2} is proved in a combination of the following two lemmas.

\begin{lem}\label{lem:i-iii_infty}
Let $s>1$ and $0<q \le \infty$. Assume that $C_\varphi$ is bounded on $B^{s}_{\infty,q}$. 
Then $\varphi' \in M(B^{s-1}_{\infty,q})$.
\end{lem}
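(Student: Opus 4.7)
The plan is to reduce the claim to $\varphi' \in B^{s-1}_{\infty,q}$ via Theorem \ref{thm:pm-NS}(iii), which states $M(B^{s-1}_{\infty,q}) = B^{s-1}_{\infty,q}$ (applicable since $s-1>0=1/\infty$). The heart of the argument is to feed $C_\varphi$ the two test functions $\sin$ and $\cos$---both lie in $B^s_{\infty,q}$ for every $s,q$ since they are smooth and bounded with all derivatives bounded---and then to recover $\varphi'$ globally through the Pythagorean identity. By the boundedness hypothesis,
\[
\sin\varphi = C_\varphi\sin,\qquad \cos\varphi = C_\varphi\cos \in B^s_{\infty,q}.
\]

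Before differentiating, I would need $\varphi \in C^1(\mathbb R)$ so that the classical chain rule applies. First, $\varphi$ is continuous: taking $f \in C^\infty_0(\mathbb R)$ with $f(y)=y$ on a sufficiently large interval, $C_\varphi f$ coincides with $\varphi$ on the preimage of that interval and is continuous by the embedding $B^s_{\infty,q} \hookrightarrow BUC$ from Proposition \ref{prop:embedding}. Next, for $s>1$ the same embedding combined with the continuity of differentiation $B^s_{\infty,q} \to B^{s-1}_{\infty,q}$ yields $B^s_{\infty,q} \hookrightarrow C^1(\mathbb R)$, so $\sin\varphi,\cos\varphi \in C^1(\mathbb R)$. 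Since at every $x$ at least one of $|\sin\varphi(x)|,|\cos\varphi(x)|$ exceeds $1/\sqrt{2}$, a local inversion using $\arcsin$ or $\arccos$ on the branch fixed by continuity of $\varphi$ lifts the $C^1$-regularity from $(\sin\varphi,\cos\varphi)$ to $\varphi \in C^1(\mathbb R)$; the identity $(\varphi')^2 = [(\sin\varphi)']^2 + [(\cos\varphi)']^2$ further shows $\varphi' \in L^\infty(\mathbb R)$.

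With this regularity the chain rule gives $(\sin\varphi)' = \varphi'\cos\varphi$ and $(\cos\varphi)' = -\varphi'\sin\varphi$, and since differentiation maps $B^s_{\infty,q}$ continuously into $B^{s-1}_{\infty,q}$, both $\varphi'\cos\varphi$ and $\varphi'\sin\varphi$ lie in $B^{s-1}_{\infty,q}$. Proposition \ref{prop:algebra} says that $B^{s-1}_{\infty,q}$ is a multiplication algebra (since $s-1>1/\infty$), while the Besov embedding $B^s_{\infty,q} \hookrightarrow B^{s-1}_{\infty,q}$ places $\sin\varphi,\cos\varphi$ inside this algebra. Multiplying and summing via $\sin^2\varphi + \cos^2\varphi = 1$,
\[
\varphi' = (\varphi'\sin\varphi)\sin\varphi + (\varphi'\cos\varphi)\cos\varphi \in B^{s-1}_{\infty,q} = M(B^{s-1}_{\infty,q}),
\]
which is the desired conclusion. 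The decisive move is the Pythagorean reconstruction, which sidesteps the lattice-and-splitting arguments needed in the $p<\infty$ proof of Lemma \ref{lem:i-iii}; the only genuine technicality is the $C^1$-regularity of $\varphi$, which could also be obtained from a direct scaling argument analogous to Proposition \ref{lem:nec3} were the embedding-based route unavailable.
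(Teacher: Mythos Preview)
Your proof is correct and takes a genuinely different route from the paper's. Both proofs begin with the same reduction via Theorem~\ref{thm:pm-NS}(iii) to showing $\varphi'\in B^{s-1}_{\infty,q}$, but diverge from there.

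The paper treats the $L^\infty$-part and the seminorm part of $\|\varphi'\|_{B^{s-1}_{\infty,q}}$ separately. For the sup-norm it uses translates $f_a$ of a fixed $f\in C^\infty_0$ with $f(x)=x$ on $[0,1]$ and reads off $\|\varphi'\|_{L^\infty(\varphi^{-1}([a,a+1]))}$ from $(C_\varphi f_a)'$; for the seminorm it builds by hand a function $g\in B^s_{\infty,q}\cap C^\infty$ with $g'=\pm1$ on a periodic family of long intervals, compares $\Delta_h^m\varphi'$ with $\Delta_h^m(C_\varphi g)'$ there, and covers $\mathbb R$ with four translates of $g$. This is close in spirit to the lattice argument of Lemma~\ref{lem:i-iii}.

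Your argument replaces that construction with the pair $(\sin,\cos)\in B^s_{\infty,q}$: from $C_\varphi\sin,\ C_\varphi\cos\in B^s_{\infty,q}\hookrightarrow C^1$ you lift $C^1$-regularity to $\varphi$ via the covering map $t\mapsto(\sin t,\cos t)$, differentiate, and then exploit that $B^{s-1}_{\infty,q}$ is a multiplication algebra (Proposition~\ref{prop:algebra}) together with $\sin^2\varphi+\cos^2\varphi=1$ to reconstruct $\varphi'$ in one step. This is shorter and more conceptual; it leans on the algebra property of $B^{s-1}_{\infty,q}$, which is available precisely because $p=\infty$ forces $s-1>1/p$. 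The paper's approach, by contrast, avoids the algebra property entirely and stays closer to the difference-based machinery used in the $p<\infty$ case.
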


\begin{proof}[Proof of Lemma \ref{lem:i-iii_infty}]
From Theorem \ref{thm:pm-NS} (iii), it suffices to show that $\varphi' \in B^{s-1}_{\infty,q}$.
Let $s>1$ and $m \in \mathbb N$ with $m>s-1$. 
Let us take $f \in C^\infty_0(\mathbb R)$ such that $f (x) = x$ on $[0,1]$, and define $f_a(x):= f(x-a)$ for $a \in \mathbb R$. Then, since $C_\varphi$ is bounded on $B^{s}_{\infty,q}$, we have
\[
\|(C_\varphi f_a)'\|_{L^\infty}
\le C\|C_\varphi f_a\|_{B^s_{\infty,q}}
\le C \|C_\varphi\|_{B^s_{\infty,q}\to B^s_{\infty,q}} \|f\|_{B^s_{\infty,q}},
\]
where the constant $C$ is independent of $a$. For any $a \in \mathbb R$, we also have 
\[
\|(C_\varphi f_a)'\|_{L^\infty}
=
\|(C_\varphi f'_a) \cdot \varphi'\|_{L^\infty}
\ge \|\varphi'\|_{L^\infty(\varphi^{-1}([a,a+1]))}.
\]
Since $\mathbb R= \bigcup_{a \in \mathbb R}\varphi^{-1}([a,a+1])$, 
we obtain 
\[
\|\varphi'\|_{L^\infty}
\le C \|C_\varphi\|_{B^s_{\infty,q}\to B^s_{\infty,q}}. 
\]

Next, 
let us take $g \in B^s_{\infty,q} \cap C^\infty$ such that 
\[
g'(x) = (-1)^k \quad \text{for }x \in [2(4k-1)m, 2(4k+1)m] 
\]
for $k \in \mathbb Z$. 
Set 
\[
I_m := \bigcup_{k\in \mathbb Z} [2(4k-1)m + m, 2(4k+1)m - m].
\]
Then we have
\[
\sup_{x \in \varphi^{-1}(I_m)} |\Delta_h^m \varphi'(x)| = \sup_{x \in \varphi^{-1}(I_m)} |\Delta_h^m (C_\varphi g)'(x)| 
\]
for $|h|\le 1$. Hence,
\[
\begin{split}
& \int_{|h|\le 1} \sup_{x\in \varphi^{-1}(I_m)}
|\Delta_h^m \varphi'(x)|^q\frac{dh}{|h|^{1+(s-1)q}}\\
& =
\int_{|h|\le 1} \sup_{x\in \varphi^{-1}(I_m)}
 |\Delta_h^m (C_\varphi g)'(x)| ^q\frac{dh}{|h|^{1+sq}}\\
 & = |(C_\varphi g)'|_{B^{s-1}_{\infty,q}}^q 
 \le \|C_\varphi g\|_{B^{s}_{\infty,q}}^q
 \le \|C_\varphi\|_{B^{s}_{\infty,q} \to B^{s}_{\infty,q}}^q  \|g\|_{B^{s}_{\infty,q}}^q.
\end{split}
\]
Similarly, if we translate $g$ by $2m$, $4m$ and $6m$ and make the above argument, we get
\[
 \int_{|h|\le 1} \sup_{x\in \varphi^{-1}(I_m + 2\ell m)}
|\Delta_h^m \varphi'(x)|^q\frac{dh}{|h|^{1+(s-1)q}}
\le \|C_\varphi\|_{B^{s}_{\infty,q} \to B^{s}_{\infty,q}}^q  \|g\|_{B^{s}_{\infty,q}}^q
\]
for $\ell = 1,2,3$. Since 
\[
\mathbb R = \bigcup_{\ell = 0}^3 (I_m + 2\ell m), \quad \text{i.e., } \mathbb R = \bigcup_{\ell = 0}^3 \varphi^{-1}(I_m + 2\ell m), 
\]
we have
\[
\begin{split}
|\varphi'|_{B^{s-1}_{\infty,q}}^q
& =  \int_{|h|\le 1} \sup_{x\in \mathbb R}
|\Delta_h^m \varphi'(x)|^q\frac{dh}{|h|^{1+(s-1)q}}\\
& \le \sum_{\ell =0}^3
 \int_{|h|\le 1} \sup_{x\in \varphi^{-1}(I_m + 2\ell m)}
|\Delta_h^m \varphi'(x)|^q\frac{dh}{|h|^{1+(s-1)q}}\\
& \le  4\|C_\varphi\|_{B^{s}_{\infty,q} \to B^{s}_{\infty,q}}^q  \|g\|_{B^{s}_{\infty,q}}^q.
\end{split}
\]
The proof of Lemma \ref{lem:i-iii_infty} is finished.
\end{proof}

\begin{lem}\label{lem:iii-ii_infty}
Let $s>1$ and $0< q \le \infty$. 
Assume that $\varphi' \in M(B^{s-1}_{\infty,q})$. Then 
$C_\varphi$ is bounded on $B^{s}_{\infty,q}$. 
\end{lem}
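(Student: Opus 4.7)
The plan is to apply Lemma \ref{lem:ii-i} iteratively after first establishing the boundedness of $C_\varphi$ on $B^{s-1}_{\infty,q}$. By Theorem \ref{thm:pm-NS}(iii), $M(B^{s-1}_{\infty,q}) = B^{s-1}_{\infty,q}$, so the hypothesis reduces to $\varphi' \in B^{s-1}_{\infty,q}$. Since $s-1>0$, Proposition \ref{prop:embedding} gives $B^{s-1}_{\infty,q} \hookrightarrow L^\infty$, hence $\varphi$ is Lipschitz. Moreover, for any $t \in (0,s-1]$ the embedding $B^{s-1}_{\infty,q} \hookrightarrow B^{t}_{\infty,q}$ yields $\varphi' \in M(B^t_{\infty,q})$, so the multiplier hypothesis of Lemma \ref{lem:ii-i} is automatically available at every smaller smoothness level.

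For the base case $s \in (1,2)$, since $s-1 \in (0,1)$ and $\varphi$ is Lipschitz, Theorem \ref{thm:low-infty} (when $q=\infty$) together with Corollary \ref{cor:low-infty} (when $0<q<\infty$) give that $C_\varphi$ is bounded on $B^{s-1}_{\infty,q}$. Combined with $\varphi' \in M(B^{s-1}_{\infty,q})$, Lemma \ref{lem:ii-i} yields the boundedness of $C_\varphi$ on $B^{s}_{\infty,q}$.

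The main obstacle is the borderline case $s=2$: one needs boundedness on $B^{1}_{\infty,q}$, but Theorem \ref{thm:low-infty} does not cover the value $s=1$. The fix is real interpolation. Since $C_\varphi$ is bounded on $B^{1/2}_{\infty,q}$ (by the Lipschitzness of $\varphi$) and on $B^{3/2}_{\infty,q}$ (by the base case just established, where the required $\varphi' \in M(B^{1/2}_{\infty,q})$ follows from $\varphi' \in B^{1}_{\infty,q} \hookrightarrow B^{1/2}_{\infty,q}$), the real interpolation identity $(B^{1/2}_{\infty,q},B^{3/2}_{\infty,q})_{1/2,q} = B^{1}_{\infty,q}$ yields boundedness of $C_\varphi$ on $B^{1}_{\infty,q}$. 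A final application of Lemma \ref{lem:ii-i} then gives boundedness on $B^{2}_{\infty,q}$.

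For $s>2$, the plan is to proceed by strong induction on $s$. Assuming the conclusion of Lemma \ref{lem:iii-ii_infty} holds for all $s' \in (1,s)$, apply the inductive hypothesis to $s' = s-1 \in (1,s)$: the required assumption $\varphi' \in M(B^{s-2}_{\infty,q})$ is satisfied because $\varphi' \in B^{s-1}_{\infty,q} \hookrightarrow B^{s-2}_{\infty,q} = M(B^{s-2}_{\infty,q})$ (using $s-2>0$). Thus $C_\varphi$ is bounded on $B^{s-1}_{\infty,q}$, and one more application of Lemma \ref{lem:ii-i} closes the induction. The only place where a nontrivial tool beyond Lemma \ref{lem:ii-i} and the lower-order results is needed is the interpolation step for $s=2$; all integer cases $s=n \ge 3$ are then covered automatically through the inductive step.
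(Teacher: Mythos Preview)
Your proof is correct and takes essentially the same route as the paper. The paper's proof notes that $\varphi$ is Lipschitz, covers the base case $1<s<2$ via Theorem~\ref{thm:low-infty} and Corollary~\ref{cor:low-infty}, and then simply appeals to ``a similar inductive argument to Lemma~\ref{lem:iii-ii}''; unwinding that reference yields precisely your interpolation step for the borderline $s=2$ (the $p=\infty$ analogue of Lemma~\ref{lem:3.6}) followed by the induction via Lemma~\ref{lem:ii-i}.
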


\begin{proof}[Proof of Lemma \ref{lem:iii-ii_infty}]
Since $\varphi$ is Lipschitz by Proposition \ref{prop:sobolev}, 
we see that $C_\varphi$ is bounded on $B^{s-1}_{\infty,q}$ with $1<s<2$ from Theorem \ref{thm:low-infty} and Corollary \ref{cor:low-infty}. 
Hence, the proof can be done by a similar inductive argument to Lemma \ref{lem:iii-ii}.
\end{proof}

\section{The case of Sobolev spaces}\label{sec:4}
In this section, we mention the composition operators $C_\varphi$ on the Sobolev spaces $H^s_p$, which are defined by 
\[
H^s_p := \left\{ f \in \mathcal S'  \, \big|\, 
\|f\|_{H^s_p}= \| \mathcal F^{-1}[ (1 + |\xi|^2)^{s/2} \mathcal F f]\|_{L^p}<\infty
\right\}
\]
for $s \in \mathbb R$ and $p > 1$. Similarly to the case of $B^s_{p,q}$, 
we have the following:

\begin{thm}\label{thm:main-TL}
    Let $1 < p < \infty$ and $s >1 +1/p$. Assume \eqref{ass:homeo}. 
    Then $C_\varphi$ is bounded on $H^s_{p}$ if and only if $U(\varphi)<\infty$ and $\varphi' \in M(H^{s-1}_{p})$.
\end{thm}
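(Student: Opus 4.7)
The plan is to run the same three-step argument that proves Theorem~\ref{thm:main1}, with the Besov norm $\|\cdot\|_{B^s_{p,q}}$ replaced by the Bessel-potential norm $\|\cdot\|_{H^s_p}$ and the Nguyen--Sickel characterization of $M(B^{s-1}_{p,q})$ replaced by Strichartz's characterization of $M(H^{s-1}_p)$ (both of which describe the multiplier space through testing against $\ell^p$-combinations of shifts of a fixed bump $\psi$). Thus the argument splits into: (a) sufficiency via the chain rule plus an induction on $s$; (b) necessity of $U(\varphi)<\infty$ by a direct test-function computation; (c) necessity of $\varphi'\in M(H^{s-1}_p)$ by the bump-function construction used in Lemma~\ref{lem:i-iii}.

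For sufficiency, I would first establish the $H^s_p$-analogues of Lemma~\ref{lem:suf1} and Lemma~\ref{lem:suf2}: for $s>1/p$ and $U(\varphi)<\infty$,
\[
\|C_\varphi f\|_{L^p}\lesssim U(\varphi)^{1/p}\|f\|_{H^s_p},
\]
which follows from the embedding $\bigl(\sum_{j}\|f\|_{L^\infty([j,j+1])}^p\bigr)^{1/p}\lesssim\|f\|_{H^s_p}$. Then, since $H^s_p\hookrightarrow W^1_p$ for $s>1$, the chain rule $(C_\varphi f)'=\varphi'\cdot C_\varphi f'$ and the assumption $\varphi'\in M(H^{s-1}_p)$ yield
\[
\|C_\varphi f\|_{H^s_p}\lesssim\|C_\varphi f\|_{L^p}+\|\varphi'\|_{M(H^{s-1}_p)}\|C_\varphi\|_{H^{s-1}_p\to H^{s-1}_p}\|f\|_{H^s_p},
\]
reducing matters to boundedness of $C_\varphi$ on $H^{s-1}_p$. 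Noting that $\varphi'\in M(H^{s-1}_p)$ is bounded (by the Sobolev embedding $H^{s-1}_p\hookrightarrow L^\infty$ for $s-1>1/p$), $\varphi$ is Lipschitz. For the base case $1+1/p<s<2$ we have $0<s-1<1$, so Theorem~\ref{thm:previous1} (applied to the homeomorphism $\varphi$ with $\varphi$ Lipschitz and $U(\varphi)<\infty$) combined with the identification $H^{s-1}_p=F^{s-1}_{p,2}$ and real-interpolation between $L^p$ and $W^1_p$ gives boundedness on $H^{s-1}_p$. Higher $s$ is then handled by iteration exactly as in Lemma~\ref{lem:iii-ii}.

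For necessity, $U(\varphi)<\infty$ follows by the argument of Lemma~\ref{lem:nec1} applied to $H^s_p$, using only the $L^p$-lower bound on a characteristic-function-type test. To show $\varphi'\in M(H^{s-1}_p)$, I would mimic the proof of Lemma~\ref{lem:i-iii}: for each $z\in\mathbb Z$ choose $a_z\in\mathbb R$ and a smooth $f$ with $f(x)=x$ near the origin, so that $(C_\varphi f_{a_z})'(x)=\varphi'(x)$ on $\varphi^{-1}([a_z-R,a_z+R])$. Since the homeomorphism assumption \eqref{ass:homeo} gives $\#\varphi^{-1}(x)=1$ for all $x$, the combinatorial partitioning of $\mathbb Z$ into classes $\Omega_\ell^k$ in Lemma~\ref{lem:i-iii} collapses to a single partition $\{\Omega_\ell\}_{\ell=1}^N$ of $\mathbb Z$ on which the translates of $\supp\psi$ and of $\supp C_\varphi f_{a_z}$ are pairwise far apart. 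One then tests $C_\varphi$ on the $H^s_p$-function $\sum_{z\in\Omega_\ell}|c_z|f_{a_z}$, invokes boundedness of $C_\varphi$ on $H^s_p$, and reads off the bound on $\|\varphi'\sum_z c_z\psi_z\|_{H^{s-1}_p}$ via the chain rule; summing over $\ell$ and taking the sup over $\|\{c_z\}\|_{\ell^p}\le 1$ gives $\varphi'\in M(H^{s-1}_p)$ by Strichartz's characterization.

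The principal technical obstacle lies in replacing the Besov orthogonality identities \eqref{key1}--\eqref{key2-m}. In the Besov case the seminorm is an outer $L^q(dh/|h|)$ of an $L^p$-norm of differences, so disjointness of supports immediately produces the $p$-th-power additivity used throughout Lemma~\ref{lem:i-iii}. For $H^s_p=F^s_{p,2}$ the norm involves an inner $\ell^2$-Littlewood--Paley sum inside the $L^p$-norm, and disjointness of supports does not yield such clean equalities. The substitute is a disjoint-support quasi-additivity estimate
\[
\Big\|\sum_{z\in\Omega_\ell}g_z\Big\|_{H^s_p}\lesssim\Big(\sum_{z\in\Omega_\ell}\|g_z\|_{H^s_p}^p\Big)^{1/p}
\]
valid when the supports $\supp g_z$ are uniformly separated, which can be derived from the atomic/molecular decomposition of $F^s_{p,2}$ (see, e.g., \cite{Tri_1992}) or directly from the Fefferman--Stein vector-valued maximal inequality. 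Once this substitute is in place, the remainder of the argument transcribes verbatim from the Besov case.
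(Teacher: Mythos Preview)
Your overall architecture matches the paper's, but two points deserve correction.

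For the necessity of $\varphi'\in M(H^{s-1}_p)$, the paper invokes Strichartz's result in the form $M(H^{s-1}_p)=H^{s-1}_{p,\mathrm{unif}}$ (Lemma~\ref{lem:multi-TL}), \emph{not} an $\ell^p$-combination characterization as you assert. One therefore tests $\varphi'$ against a single bump $\psi_z$ at a time, and the entire partitioning $\{\Omega_\ell^k\}$ together with the disjoint-support additivity machinery is unnecessary --- precisely the simplification noted in Remark~\ref{rem:ass-finite} for the Besov case $p\le q$. Moreover, the ``principal technical obstacle'' you flag is not one: the paper works with the difference characterization \eqref{eq:S-equiv}, whose \emph{outer} norm is $L^p(dx)$ and whose integrand is local in $x$ (within a window of width $\lesssim m$). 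For functions with well-separated supports the $p$-th-power additivity therefore holds exactly, just as in \eqref{key1}--\eqref{key2-m}; no Fefferman--Stein or atomic-decomposition substitute is required. Your route would work after this correction, but it is needlessly heavy.

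More seriously, your base case for sufficiency has a genuine gap. Real interpolation between $L^p$ and $W^1_p$ yields Besov spaces, not $H^{s-1}_p$; and complex interpolation $[L^p,W^1_p]_\theta=H^\theta_p$ is unavailable here because $C_\varphi$ is not bounded on $L^p$ under only ``$\varphi$ Lipschitz and $U(\varphi)<\infty$'' (that would require $\varphi^{-1}$ Lipschitz). Theorem~\ref{thm:previous1} concerns $B^s_{p,q}$ and does not transfer to $H^{s-1}_p$ by any interpolation you have named. The paper's remedy is Lemma~\ref{lem:S-low}: one bounds $\|C_\varphi f\|_{L^p}$ directly as in Lemma~\ref{lem:suf1}, and for the seminorm uses the complex interpolation $[\mathrm{BMO},\dot W^1_{p_0}]_{\theta}=\dot H^{s_0}_p$ (followed by a second interpolation with $\dot W^1_p$), feeding in Jones's theorem that $C_\varphi$ is bounded on $\mathrm{BMO}$ for Lipschitz homeomorphisms together with Bourdaud's result for $\dot W^1_p$. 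This reliance on Jones's $\mathrm{BMO}$ result is exactly why the theorem is stated under \eqref{ass:homeo} rather than \eqref{ass:finite}.
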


The proof is done by a similar argument to that of Theorem \ref{thm:main1} with 
the characterization of $H^s_p$ by differences
\begin{equation}\label{eq:S-equiv}
\|f\|_{H^s_p} \sim 
\|f\|_{L^p}
+
\left\|
\left(
\int_0^1
t^{-2s}
\left(
\frac{1}{t}
\int_{|h|\le t}
|\Delta_h^mf(x)|\, dh \right)^2
\frac{dt}{t}
\right)^\frac{1}{2}\right\|_{L^p},
\end{equation}
and 
the following two lemmas. 

\begin{lem}\label{lem:S-low}
Let $1< p < \infty$ and $1/p < s < 1$. Assume \eqref{ass:homeo}. 
Then 
$C_\varphi$ is bounded on $H^s_{p}$ if $\varphi$ is Lipschitz and $U(\varphi)<\infty$. 
\end{lem}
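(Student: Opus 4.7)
I would adapt the lower-order Besov sufficiency argument (Theorem \ref{thm:previous2}) to the Bessel-potential setting by working with the Strichartz characterization \eqref{eq:S-equiv}. Writing $\|C_\varphi f\|_{H^s_p}$ via \eqref{eq:S-equiv} splits the target estimate into an $L^p$-bound for $C_\varphi f$ and a bound on the square-function semi-norm.

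For the $L^p$-bound I would copy the proof of Lemma \ref{lem:suf1}. Since $s>1/p$, there is a Sobolev-type summability
\[\Bigl(\sum_{k\in\mathbb Z}\|f\|_{L^\infty([k,k+1])}^p\Bigr)^{1/p}\lesssim \|f\|_{H^s_p},\]
which can be obtained from $H^s_p \hookrightarrow B^s_{p,\max(p,2)}$ together with \eqref{eq.im-Sobolev}. Partitioning $\mathbb R = \bigcup_k \varphi^{-1}([k,k+1])$ and applying $U(\varphi)<\infty$ then yields $\|C_\varphi f\|_{L^p}\lesssim U(\varphi)^{1/p}\|f\|_{H^s_p}$, exactly as in Lemma \ref{lem:suf1}.

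For the square-function bound I would take $m=1$ in \eqref{eq:S-equiv} (permissible since $0<s<1$), set $L:=\|\varphi'\|_{L^\infty}$, and introduce
\[A_t(x) := \frac{1}{t}\int_{|h|\le t}|\Delta_h(f\circ\varphi)(x)|\,dh.\]
The Lipschitz bound $|\varphi(x+h)-\varphi(x)|\le L|h|$ gives the pointwise domination
\[A_t(x)\le \omega_f(\varphi(x),Lt),\qquad \omega_f(y,r):=\sup_{|k|\le r}|f(y+k)-f(y)|.\]
Squaring, integrating against $t^{-2s-1}dt$ on $(0,1]$, and substituting $\tau=Lt$ gives, up to constants depending on $L$,
\[\int_0^1 t^{-2s}A_t(x)^2\frac{dt}{t}\lesssim L^{2s}\Bigl(\int_0^1\tau^{-2s}\omega_f(\varphi(x),\tau)^2\frac{d\tau}{\tau}+\|f\|_{L^\infty([\varphi(x)-L,\varphi(x)+L])}^2\Bigr),\]
where the tail $\tau\in(1,L]$ is absorbed by $\omega_f(y,\tau)\le 2\|f\|_{L^\infty([y-\tau,y+\tau])}$. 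Taking $L^{p}$ in $x$ and partitioning once more by $\varphi^{-1}([k,k+1])$, the estimate reduces to controlling $\sum_k \|S_f\|_{L^\infty([k,k+1])}^p$ by $\|f\|_{H^s_p}^p$, where $S_f(y):=\bigl(\int_0^1\tau^{-2s}\omega_f(y,\tau)^2\,d\tau/\tau\bigr)^{1/2}$. For this I would invoke the standard sup-version of the Strichartz characterization for $F^s_{p,2}=H^s_p$ ($1<p<\infty$, $0<s<1$), combined with the summability from Step~1 applied to $S_f$.

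\textbf{Main obstacle.} The principal difficulty is that $\varphi^{-1}$ need not be Lipschitz (only $U(\varphi)<\infty$), so a direct change of variables $y=\varphi(x)$ inside the $h$-average in \eqref{eq:S-equiv} introduces an unbounded factor $1/|\varphi'|$. The key technical move is to replace the $L^1$-average in $h$ by the pointwise modulus $\omega_f(\cdot,Lt)$, after which only the Lipschitz direction is used; transferring the resulting pointwise estimate at $\varphi(x)$ back into an $L^p$ estimate at $x$ leans on the homeomorphism together with $U(\varphi)<\infty$, and on the $L^p$-equivalence of the average and sup forms of the Triebel-Lizorkin semi-norm.
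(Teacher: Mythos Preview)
Your $L^p$ step is fine and coincides with the paper's. The gap is in the square-function step: after the pointwise domination and the unit-interval decomposition you are led to
\[
\sum_{k\in\mathbb Z}\|S_f\|_{L^\infty([k,k+1])}^p \lesssim \|f\|_{H^s_p}^p,
\qquad
S_f(y)=\Bigl(\int_0^1\tau^{-2s}\,\omega_f(y,\tau)^2\,\frac{d\tau}{\tau}\Bigr)^{1/2},
\]
and you propose to obtain this from ``the summability from Step~1 applied to $S_f$''. The sup-form characterization $\|S_f\|_{L^p}\lesssim\|f\|_{H^s_p}$ is indeed standard, but the summability inequality \eqref{eq.im-Sobolev} applies only to functions lying in a space embedded into $BUC$; it says nothing about $S_f$, which is merely in $L^p$. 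The displayed estimate is in fact false. Take $f(x)=|x|^s\psi(x)$ with $\psi\in C^\infty_0$ equal to $1$ near the origin: one checks $f\in B^s_{p,q}$ for every $q$ (hence $f\in H^s_p$), while $\omega_f(0,\tau)=\tau^s$ for small $\tau$ gives $S_f(0)^2=\int_0^1\tau^{-1}\,d\tau=\infty$, so already the single term $\|S_f\|_{L^\infty([-1,0])}$ blows up. Replacing the $h$-average in \eqref{eq:S-equiv} by the pointwise modulus $\omega_f$ is exactly what destroys the needed $L^p$-control; this reflects the genuine difference between the Triebel--Lizorkin order of norms (outer $L^p_x$, inner $L^2_t$) and the Besov order, where the unit-interval trick does succeed.

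The paper does not attempt a direct estimate via \eqref{eq:S-equiv}. For the seminorm it argues by complex interpolation on the homogeneous scale,
\[
[\mathrm{BMO},\dot W^1_{p_0}]_{\theta_1}=\dot H^{s_0}_p,
\qquad
[\dot H^{s_0}_p,\dot W^1_p]_{\theta_2}=\dot H^s_p,
\]
using the known boundedness of $C_\varphi$ on $\mathrm{BMO}$ (Jones) and on $\dot W^1_p$ (Bourdaud) for Lipschitz homeomorphisms, and letting $p_0\downarrow 1$ to cover every $s\in(1/p,1)$. This is also why hypothesis \eqref{ass:homeo} is imposed in this lemma: the $\mathrm{BMO}$ endpoint requires monotonicity of $\varphi$.
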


\begin{proof}
The proof is similar to \cite[Subsections 3.2 and 3.3]{BS-1999}.
Similarly to the proof of Lemma \ref{lem:suf1}, it can be shown that 
\[
\|C_\varphi f\|_{L^p} \le CU(\varphi)\|f\|_{H^s_{p}}
\]
for any $f\in H^s_p$ and some constant $C>0$. 
For the second term in \eqref{eq:S-equiv}, we use the complex interpolation
\[
[\mathrm{BMO}, \dot W^1_{p_0}]_{\theta_1} = \dot H^{s_0}_p,\quad s_0 = \theta_1,\quad \frac{1}{p} = \frac{\theta_1}{p_0},\quad 0<\theta_1 <1
\]
for $1<p_0<\infty$, 
and further, the complex interpolation
\[
[\dot H^{s_0}_p, \dot W^1_{p}]_{\theta_2} = \dot H^{s}_p,\quad s = (1-\theta_2) s_0 + \theta_2,\quad 0<\theta_2 <1
\]
(see \cite[Corollary 8.3]{FJ-1990} for the complex interpolations). 
Combining these interpolations with the results on boundedness of $C_\varphi$ on $\mathrm{BMO}$ (\cite[Theorem]{Jon-1983}) and on $\dot W^1_p$ (\cite[Theorem 4]{Bou-2000}), 
and taking $p_0$ close to $1$, 
we see that 
$C_\varphi$ is bounded from $H^s_p$ to the homogeneous Sobolev space $\dot H^{s}_p$ for any $1< p < \infty$ and $1/p < s < 1$ if $\varphi$ is Lipschitz and satisfies \eqref{ass:homeo}.
The proof is finished.
\end{proof}

\begin{lem}\label{lem:multi-TL}
Let $1 < p < \infty$ and $s >1/p$. 
Then 
$M(H^{s}_{p}) = H^s_{p,\mathrm{unif}}$. 
Here, the space $H^s_{p,\mathrm{unif}}$ is similarly defined to \eqref{def:B-unif}.
\end{lem}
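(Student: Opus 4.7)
The plan is to establish the two continuous inclusions $M(H^s_p) \hookrightarrow H^s_{p,\mathrm{unif}}$ and $H^s_{p,\mathrm{unif}} \hookrightarrow M(H^s_p)$, adapting the argument of Lemma \ref{lem:relation} and Theorem \ref{thm:pm-NS}(i) from the Besov setting to the Sobolev setting. The first inclusion is easy: translations are isometries on $H^s_p$, so the test function $\psi_z = \psi(\cdot - z)$ satisfies $\|\psi_z\|_{H^s_p} = \|\psi\|_{H^s_p}$ independently of $z$, and the multiplier estimate applied to $\psi_z$ yields
\[
\sup_{z \in \mathbb{Z}} \|f \psi_z\|_{H^s_p} \le \|f\|_{M(H^s_p)} \|\psi\|_{H^s_p},
\]
i.e., $f \in H^s_{p,\mathrm{unif}}$.

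For the reverse inclusion I would fix $f \in H^s_{p,\mathrm{unif}}$ and $g \in H^s_p$, introduce enlarged cutoffs $\tilde\psi_z \in C_0^\infty$ with $\tilde\psi_z \equiv 1$ on $\mathrm{supp}\,\psi_z$ and $\mathrm{supp}\,\tilde\psi_z \subset [z-2, z+2]$, and decompose $fg = \sum_z (f\tilde\psi_z)(g\psi_z)$ using $\sum_z \psi_z \equiv 1$. Since $H^s_p$ is a multiplication algebra for $s > 1/p$ (the Sobolev counterpart of Proposition \ref{prop:algebra}), each summand obeys
\[
\|(f\tilde\psi_z)(g\psi_z)\|_{H^s_p} \lesssim \|f\tilde\psi_z\|_{H^s_p} \|g\psi_z\|_{H^s_p} \lesssim \|f\|_{H^s_{p,\mathrm{unif}}} \|g\psi_z\|_{H^s_p}.
\]
To sum over $z$, I would partition $\mathbb{Z}$ into finitely many classes $\Omega_1, \ldots, \Omega_N$ so that within each $\Omega_\ell$ the $\tilde\psi_z$-supports are pairwise separated by more than $2m$ (with $m>s$ as in \eqref{eq:S-equiv}). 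For distinct $z, z' \in \Omega_\ell$ and $|h| \le 1$, the finite differences $\Delta_h^m[(f\tilde\psi_z)(g\psi_z)]$ and $\Delta_h^m[(f\tilde\psi_{z'})(g\psi_{z'})]$ have disjoint supports in $x$, so the Strichartz-type integrand in \eqref{eq:S-equiv} reduces at each $x$ to a single nonzero term, yielding the exact decoupling
\[
\Big\|\sum_{z \in \Omega_\ell}(f\tilde\psi_z)(g\psi_z)\Big\|_{H^s_p}^p \sim \sum_{z \in \Omega_\ell}\|(f\tilde\psi_z)(g\psi_z)\|_{H^s_p}^p.
\]
Finally, the converse local-to-global bound $\sum_z \|g\psi_z\|_{H^s_p}^p \lesssim \|g\|_{H^s_p}^p$ follows by an analogous pointwise analysis: expanding $\Delta_h^m(g\psi_z)$ via the Leibniz formula for finite differences and exploiting bounded overlap of $\{\mathrm{supp}\,\psi_z\}_{z\in\mathbb{Z}}$, together with Sobolev's embedding ($s>1/p$) to absorb low-order differences of the cutoffs.

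The step I expect to be the main obstacle is this decoupling: in the Besov case with $p \le q$, Minkowski's inequality makes the analogous summation transparent via a genuine sequence-space inequality, whereas the $L^p(L^2(dt/t))$ structure of \eqref{eq:S-equiv} forces one to rely on pointwise support disjointness. Bookkeeping the commutator terms in the Leibniz expansion of $\Delta_h^m(g\psi_z)$ and summing them with the correct multiplicity over $z$ will require the full force of the bounded-overlap geometry of $\{\psi_z\}_{z\in\mathbb{Z}}$ combined with the algebra property for $s > 1/p$; any looser argument would force one through the Triebel--Lizorkin atomic/molecular machinery, which I want to avoid.
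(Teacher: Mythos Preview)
The paper does not actually prove Lemma~\ref{lem:multi-TL}; it simply cites it as a classical result due to Strichartz \cite[Corollary in Section~2 of Chapter~II]{Str-1967}, with additional references to \cite{KS-2002} and \cite{sic-1999}. So there is no ``paper's own proof'' to compare against.

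Your proposal, by contrast, sketches a direct self-contained argument modeled on the Besov-space machinery in Section~\ref{sub:2.3}. The outline is sound: the easy inclusion is immediate, and for the hard inclusion your strategy of localizing via $fg=\sum_z(f\tilde\psi_z)(g\psi_z)$, invoking the algebra property for $s>1/p$, and decoupling through support separation is the standard route. Your pointwise-disjointness argument for the Strichartz norm \eqref{eq:S-equiv} is correct once supports are separated by more than $2m$: for each fixed $x$ at most one $z$ can have $\Delta_h^m[(f\tilde\psi_z)(g\psi_z)](x)\neq 0$ for \emph{some} $|h|\le 1$, so the inner $L^2(dt/t)$-integral reduces to a single term before taking the outer $L^p$-norm, and the $\ell^p$ decoupling follows exactly. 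The localization estimate $\sum_z\|g\psi_z\|_{H^s_p}^p\lesssim\|g\|_{H^s_p}^p$ that you need at the end is likewise standard; your Leibniz-plus-bounded-overlap plan works, though it is somewhat laborious in the difference characterization and is often handled instead via the Fourier-analytic definition or by quoting the known fact that $\{\psi_z\}_z$ is a uniformly bounded family in $M(H^s_p)$.

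In short: your proof is correct in outline and more informative than what the paper offers, which is just a citation. What you gain is self-containment; what the paper gains is brevity, since the result has been in the literature for over fifty years.
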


Lemma \ref{lem:multi-TL} is a famous result by \cite[Corollary in Section 2 in Chapter II]{Str-1967} (see also \cite[(1.2) in Section 1]{KS-2002} or \cite[Theorem 2.5]{sic-1999}). 

\begin{rem}
    Theorem \ref{thm:main-TL} includes the previous result for the Sobolev spaces $W^k_{p}$ with $k \in \mathbb N$, $k\ge2$ mentioned in \cite[Remark 1.14]{BS-1999}, since $H^k_{p} = W^k_p$ for $k\in \mathbb N$ and $1<p<\infty$.  
\end{rem}

\begin{rem}
We impose the assumption \eqref{ass:homeo} in Theorem \ref{thm:main-TL} and Lemma \ref{lem:S-low}, because the monotonicity of $\varphi$  
is required to use the result on the boundedness on $\mathrm{BMO}$ by \cite[Theorem]{Jon-1983}. 
\end{rem}

\bibliographystyle{plain}   
\bibliography{besov1D}

\end{document}